\newtheorem{lemma}{Lemma}[section]
\newtheorem{corollary}{Corollary}[section]
\newtheorem{theorem}{Theorem}[section]
\newtheorem{definition}{Definition}[section]
\newcommand\bm[1]{\boldsymbol{#1}}
\Crefname{figure}{Figure}{Figures}
\crefname{figure}{Fig.}{Figs.}
\Crefname{equation}{Equation}{Equations}
\crefname{equation}{Eq.}{Eqs.}
\Crefname{section}{Section}{Sections}
\crefname{section}{Sec.}{Secs.}
\crefname{appendix}{Appendix}{Appendices}
\Crefname{appendix}{Appendix}{Appendices}
\newcommand\Nref[1]{\Cref{#1} (\nameref{#1})}
\NewDocumentCommand\sw{ s m }{
   \IfBooleanTF{#1}{%
      \textcolor{orange}{#2}%
   }{%
      \textcolor{orange}{[SW: #2]}%
   }%
}
\NewDocumentCommand\gb{ s m }{
   \IfBooleanTF{#1}{%
      \textcolor{blue}{#2}%
   }{%
      \textcolor{blue}{[Gb: #2]}%
   }%
}
\newcommand*\cube{{\mbox{\mancube}}}
\renewcommand\rm[1]{{\mathrm{#1}}}
\newcommand\tesseract{%
\begin{tikzpicture}[scale=0.1, baseline={([yshift=-.5ex] current bounding box.center)}, line width=0.6]
\node at (0.25, 0.25) {$\,$};
\node at (0.25-0.5, 0.25-0.5) {$\,$};
\node at (0.25+0.5, 0.25+0.5) {$\,$};
\begin{pgfinterruptboundingbox}
		\node [] (0) at (-1, 1) {};
		\node [] (1) at (2, 2) {};
		\node [] (2) at (1, -1) {};
		\node [] (3) at (-1, -1) {};
		\node [] (4) at (0, 2) {};
		\node [] (5) at (1, 1) {};
		\node [] (6) at (2, 0) {};
		\node [] (7) at (0, 0) {};
		\node [] (13) at (-0.25, -0.25) {};
		\node [] (15) at (0.75, -0.25) {};
		\node [] (16) at (0.75, 0.75) {};
		\node [] (17) at (-0.25, 0.75) {};
		\node [] (18) at (0.25, 1.25) {};
		\node [] (19) at (1.25, 1.25) {};
		\node [] (20) at (1.25, 0.25) {};
		\node [] (21) at (0.25, 0.25) {};
		\draw [gray] (18.center) to (17.center);
		\draw [gray] (15.center) to (20.center);
		\draw [gray] (16.center) to (5.center);
		\draw [gray] (18.center) to (21.center);
		\draw [gray] (21.center) to (20.center);
		\draw (2.center) to (6.center);
		\draw (6.center) to (1.center);
		\draw (1.center) to (4.center);
		\draw (4.center) to (0.center);
		\draw (0.center) to (3.center);
		\draw (3.center) to (2.center);
		\draw (0.center) to (5.center);
		\draw (5.center) to (2.center);
		\draw (5.center) to (1.center);
		\draw (17.center) to (13.center);
		\draw (13.center) to (15.center);
		\draw (15.center) to (16.center);
		\draw (16.center) to (17.center);
		\draw (18.center) to (19.center);
		\draw (18.center) to (4.center);
		\draw (13.center) to (3.center);
		\draw (15.center) to (2.center);
		\draw (20.center) to (6.center);
		\draw (0.center) to (17.center);
		\draw (13.center) to (21.center);
		\draw (19.center) to (20.center);
\end{pgfinterruptboundingbox}
\end{tikzpicture}}
\def\@tocline#1#2#3#4#5#6#7{\relax
  \ifnum #1>\c@tocdepth 
  \else
    \par \addpenalty\@secpenalty\addvspace{#2}%
    \begingroup \hyphenpenalty\@M
    \@ifempty{#4}{%
      \@tempdima\csname r@tocindent\number#1\endcsname\relax
    }{%
      \@tempdima#4\relax
    }%
    \parindent\z@ \leftskip#3\relax \advance\leftskip\@tempdima\relax
    \rightskip\@pnumwidth plus4em \parfillskip-\@pnumwidth
    #5\leavevmode\hskip-\@tempdima
      \ifcase #1
       \or\or \hskip 1em \or \hskip 2em \else \hskip 3em \fi%
      #6\nobreak\relax
    \hfill\hbox to\@pnumwidth{\@tocpagenum{#7}}\par
    \nobreak
    \endgroup
  \fi}
\def\@cite#1#2{({#1\if@tempswa , #2\fi})}
\let\@internalcite\cite
\def\cite{\def\citeauthoryear##1##2{##1, ##2}\@internalcite}
\def\shortcite{\def\citeauthoryear##1##2{##2}\@internalcite}
\def\@biblabel#1{\def\citeauthoryear##1##2{##1, ##2}[#1]\hfill}
\begin{document}


\title[Existence and bounds of growth constants for restricted manifolds]{Existence and bounds of growth constants \\ for restricted walks, surfaces, and generalisations}
\author{Sun Woo P. Kim$^*$$^1$}
\author{Gabriele Pinna$^1$}
\thanks{$^*$ swk34@cantab.ac.uk}
\thanks{$^1$ Department of Physics, King's College London, Strand, London WC2R 2LS, United Kingdom}

\begin{abstract}
    We introduce classes of restricted walks, surfaces and their generalisations. For example, self-osculating walks (SOWs) are supersets of self-avoiding walks (SAWs) where edges are still not allowed to cross but may `kiss' at a vertex. They are analogous to osculating polygons introduced in \cite{jensen1998self} except that they are not required to be closed. The `automata' method of \cite{ponitz2000improved} can be adapted to such restricted walks. For example, we prove upper bounds for the connective constant for SOWs on the square and triangular lattices to be $\mu^\rm{SOW}_\square \leq 2.73911$ and $\mu^\rm{SOW}_\triangle \leq 4.44931$, respectively. In analogy, we also introduce self-osculating surfaces (SOSs), a superset of self-avoiding surfaces (SASs) which can be generated from fixed polyominoids (XDs). We further generalise and define self-avoiding $k$-manifolds (SAMs) and its supersets, self-osculating $k$-manifolds (SOMs) in the $d$-dim hypercubic lattice and $(d, k)$-XDs. By adapting the concatenation procedure \cite{van1989self}, we prove that their growth constants exist,
    and prove an explicit form for their upper and lower bounds.
    The upper bounds can be improved by adapting the `twig' method, originally developed for polyominoes \cite{eden1961two,klarner1973procedure}. For the cubic lattice, we find improved upper bounds for the growth constant of SASs as $\mu^\rm{SAS}_{\cube} \leq 17.11728$.
    
    \bigskip
    
    \noindent \textbf{Keywords.} Self-avoiding walk, self-avoiding surface, polyform, polystick, polyomino, polyominoid, polycube, lattice animal, lattice cluster, connective constant, growth constant, Klarner's constant.
\end{abstract}

\maketitle

{
\begin{align*}
    \begin{tikzpicture}[baseline={([yshift=-.5ex] current bounding box.center)}, scale=0.65]
    \foreach \x in {0,...,10}{
        \draw [gray!50, ultra thick] (\x, 0) to (\x, 10);
    }
    \foreach \y in {0,...,10}{
        \draw [gray!50, ultra thick] (0, \y) to (10, \y);
    } 
    \draw [rounded corners, ultra thick, blue!75] (5.2, 5) -- (6,5) -- (6,6) -- (7,6) -- (7,5) -- (6,5) -- (6,4) -- (7,4) -- (7,3) -- (6,3) -- (5,3) -- (5,4) -- (5,5) -- (4,5) -- (3,5) -- (3,6) -- (4,6) -- (4,7) -- (3,7) -- (3,6) -- (2,6) -- (2,7) -- (3,7) -- (3,8) -- (4,8) -- (4,7) -- (5,7) -- (6,7) -- (7,7) -- (7,8) -- (8,8) -- (8,9) -- (9,9) -- (9, 8) -- (8,8) -- (8,7) -- (8.8,7);
    \draw[ultra thick, blue!75] (5.2, 5.1) -- (5.2, 4.9);
    
    \draw[ultra thick, blue!75] (8.8, 7.1) -- (8.8, 6.9);
    \end{tikzpicture}
\end{align*}
}

\newpage

\tableofcontents

\newpage

\section{Introduction}
\subsection{Self-osculating walks, osculating domain walls, and other restricted walks} \label{sec:restricted-walks-intro}

A self-\newline avoiding walk (SAW) on a graph starting from a vertex $v_0$ is a sequence of vertices that are successively connected by edges, and no vertex is visited twice. The length of the walk $n$ is then given by the number of vertices, not counting $v_0$. There are many ways one could modify or generalise SAWs. For example, a neighbour-avoiding walk (NAW) is like the SAW except that any vertex in the sequence cannot neighbour any other vertex in the sequence except the one immediately before or after. An edge-avoiding walk (EAW) cannot pass through an edge more than once. Self-avoiding `polygons' form a subset of SAWs, composed of closed SAWs \cite{jensen2001osculating}.

In this work, we study a particular modification that we call a self-osculating walk (SOW). While the above examples can be defined on any graph, SOWs require a notion of geometry to define, and we will only consider the triangular, hexagonal (honeycomb), and $d$-dim hypercubic lattices (i.e. Euclidean tilings by regular polytopes)~\footnote{Without a loss of generality, we will set $v_0$ to be the origin.}. 

To motivate SOWs, we will first note that SAWs can be thought of as a kind of vertex model. For example, on the square lattice, it is given by the following vertex configurations up to rotations and reflections,
\begin{align}
    \begin{tikzpicture}[baseline={([yshift=-.5ex] current bounding box.center)}, scale=0.65]
    \node [] (0) at (0, 1) {};
    \node [] (1) at (-1, 0) {};
    \node [] (2) at (0, -1) {};
    \node [] (3) at (1, 0) {};
    \draw [gray!50, ultra thick] (0, -1) to (0, 1);
    \draw [gray!50, ultra thick] (-1, 0) to (1, 0);
    \draw[dashed] (1,1) -- (-1,1) -- (-1,-1) -- (1,-1) -- (1, 1);
    \end{tikzpicture}
    \quad
    \begin{tikzpicture}[baseline={([yshift=-.5ex] current bounding box.center)}, scale=0.65]
    \node [] (0) at (0, 1) {};
    \node [] (1) at (-1, 0) {};
    \node [] (2) at (0, -1) {};
    \node [] (3) at (1, 0) {};
    \draw [gray!50, ultra thick] (0, -1) to (0, 1);
    \draw [gray!50, ultra thick] (-1, 0) to (1, 0);
    \draw [ultra thick, blue!75] (0, -1) to (0, 1);
    \draw[dashed] (1,1) -- (-1,1) -- (-1,-1) -- (1,-1) -- (1, 1);
    \end{tikzpicture}
    \quad
    \begin{tikzpicture}[baseline={([yshift=-.5ex] current bounding box.center)}, scale=0.65]
    \node [] (0) at (0, 1) {};
    \node [] (1) at (-1, 0) {};
    \node [] (2) at (0, -1) {};
    \node [] (3) at (1, 0) {};
    \draw [gray!50, ultra thick] (0, -1) to (0, 1);
    \draw [gray!50, ultra thick] (-1, 0) to (1, 0);
    \draw [ultra thick, blue!75] (0, -1) -- (0, 0) -- (1, 0);
    \draw[dashed] (1,1) -- (-1,1) -- (-1,-1) -- (1,-1) -- (1, 1);
    \end{tikzpicture}
    \quad
        \begin{tikzpicture}[baseline={([yshift=-.5ex] current bounding box.center)}, scale=0.65]
    \node [] (0) at (0, 1) {};
    \node [] (1) at (-1, 0) {};
    \node [] (2) at (0, -1) {};
    \node [] (3) at (1, 0) {};
    \draw [gray!50, ultra thick] (0, -1) to (0, 1);
    \draw [gray!50, ultra thick] (-1, 0) to (1, 0);
    \draw[ultra thick, blue!75] (-0.1, 0) -- (0.1, 0);
    \draw [ultra thick, blue!75] (0, -1) -- (0, 0);
    \draw[dashed] (1,1) -- (-1,1) -- (-1,-1) -- (1,-1) -- (1, 1);
    \end{tikzpicture}.
\end{align}
Here, the `stub' on the rightmost vertex configuration denotes an open end, corresponding to the starting or terminating point of the walk. Then, a SAW of length $n$ can be thought of as a tiling of the lattice with such vertex configurations with the restriction that neighbouring vertex configurations must be compatible with each other (i.e. lines connect), the origin is occupied, there is one connected component, and the total length is $n$.

Now consider a `crossing' vertex configuration on the square lattice, which would be disallowed in SAWs. We can modify the edges in two ways such that they only `kiss' at a vertex, as follows:
\begin{equation}
    \begin{tikzpicture}[baseline={([yshift=-.5ex] current bounding box.center)}, scale=0.64]
	\draw [ultra thick, blue!75] (0, -1) to (0, 1);
        \draw [ultra thick, blue!75] (-1, 0) to (1, 0);
        \draw[dashed] (1,1) -- (-1,1) -- (-1,-1) -- (1,-1) -- (1, 1);
    \end{tikzpicture}
    \longrightarrow
    \begin{tikzpicture}[baseline={([yshift=-.5ex] current bounding box.center)}, scale=0.65]
    \draw [gray!50, ultra thick] (0, -1) to (0, 1);
    \draw [gray!50, ultra thick] (-1, 0) to (1, 0);
    \draw [bend left=45, looseness=1.75, ultra thick, blue!75] (0, 1) to (-1, 0);
    \draw [bend left=315, looseness=1.75, ultra thick, blue!75] (1, 0) to (0, -1);
    \draw[dashed] (1,1) -- (-1,1) -- (-1,-1) -- (1,-1) -- (1, 1);
    \end{tikzpicture}
     \quad
    \begin{tikzpicture}[baseline={([yshift=-.5ex] current bounding box.center)}, scale=0.65]
		\node [] (0) at (0, 1) {};
		\node [] (1) at (-1, 0) {};
		\node [] (2) at (0, -1) {};
		\node [] (3) at (1, 0) {};
    \draw [gray!50, ultra thick] (0, -1) to (0, 1);
    \draw [gray!50, ultra thick] (-1, 0) to (1, 0);
		\draw [bend right=45, looseness=1.75, ultra thick, blue!75] (0.center) to (3.center);
		\draw [bend left=45, looseness=1.75, ultra thick, blue!75] (1.center) to (2.center);
        \draw[dashed] (1,1) -- (-1,1) -- (-1,-1) -- (1,-1) -- (1, 1);
    \end{tikzpicture}.
\end{equation}
This could be thought of as replacing each sharp corner with a smooth one. We will define length of paths of such vertex configurations to be the same as the original crossing ones~\footnote{This could be thought of as placing a small circle of radius $\epsilon$ that touches the edges and taking the path along the circle as the corner is approached, then sending $\epsilon \rightarrow 0$.}.

Then, SOWs would be walks which allow for such vertex configurations to occur. A natural question would be which open ends are allowed. We have the following `bulk' vertex configurations (vertex configurations with no open ends) up to rotations and reflections,
\begin{equation} \label{eq:bulk-vertices-square}
\begin{tikzpicture}[baseline={([yshift=-.5ex] current bounding box.center)}, scale=0.65]
    \draw [gray!50, ultra thick] (0, -1) to (0, 1);
    \draw [gray!50, ultra thick] (-1, 0) to (1, 0);
        \draw[dashed] (1,1) -- (-1,1) -- (-1,-1) -- (1,-1) -- (1, 1);
\end{tikzpicture} \quad
\begin{tikzpicture}[baseline={([yshift=-.5ex] current bounding box.center)}, scale=0.65]
    \draw [gray!50, ultra thick] (0, -1) to (0, 1);
    \draw [gray!50, ultra thick] (-1, 0) to (1, 0);
		\draw [ultra thick, blue!75] (1, 0) to (-1, 0);
        \draw[dashed] (1,1) -- (-1,1) -- (-1,-1) -- (1,-1) -- (1, 1);
\end{tikzpicture} \quad
\begin{tikzpicture}[baseline={([yshift=-.5ex] current bounding box.center)}, scale=0.65]
    \draw [gray!50, ultra thick] (0, -1) to (0, 1);
    \draw [gray!50, ultra thick] (-1, 0) to (1, 0);
    \draw [bend left=45, looseness=1.75, ultra thick, blue!75] (0, 1) to (-1, 0);
        \draw[dashed] (1,1) -- (-1,1) -- (-1,-1) -- (1,-1) -- (1, 1);
\end{tikzpicture} \quad
\begin{tikzpicture}[baseline={([yshift=-.5ex] current bounding box.center)}, scale=0.65]
    \draw [gray!50, ultra thick] (0, -1) to (0, 1);
    \draw [gray!50, ultra thick] (-1, 0) to (1, 0);
		\draw [bend left=45, looseness=1.75, ultra thick, blue!75] (0, 1) to (-1, 0);
		\draw [bend left=315, looseness=1.75, ultra thick, blue!75] (1, 0) to (0, -1);
        \draw[dashed] (1,1) -- (-1,1) -- (-1,-1) -- (1,-1) -- (1, 1);
\end{tikzpicture},
\end{equation}
the last of which would be disallowed by SAWs.
We can obtain `boundary' vertex configurations by truncating some of the path on the bulk vertex configurations. This generates the following boundary vertex configurations
\begin{equation} \label{eq:boundary-vertices-square}
\begin{tikzpicture}[baseline={([yshift=-.5ex] current bounding box.center)}, scale=0.65]
    \draw [gray!50, ultra thick] (0, -1) to (0, 1);
    \draw [gray!50, ultra thick] (-1, 0) to (1, 0);
    \draw[dashed] (1,1) -- (-1,1) -- (-1,-1) -- (1,-1) -- (1, 1);
    \draw[ultra thick, blue!75] (0.2, 0) -- (1, 0);
    \draw[ultra thick, blue!75] (0.2, 0.1) -- (0.2, -0.1);
\end{tikzpicture} \quad
\begin{tikzpicture}[baseline={([yshift=-.5ex] current bounding box.center)}, scale=0.65]
    \draw [gray!50, ultra thick] (0, -1) to (0, 1);
    \draw [gray!50, ultra thick] (-1, 0) to (1, 0);
    \draw[dashed] (1,1) -- (-1,1) -- (-1,-1) -- (1,-1) -- (1, 1);
    \draw[ultra thick, blue!75] (0.2, 0) -- (1, 0);
    \draw[ultra thick, blue!75] (0.2, 0.1) -- (0.2, -0.1);
    \draw[ultra thick, blue!75] (-0.2, 0) -- (-1, 0);
    \draw[ultra thick, blue!75] (-0.2, 0.1) -- (-0.2, -0.1);
\end{tikzpicture} \quad
\begin{tikzpicture}[baseline={([yshift=-.5ex] current bounding box.center)}, scale=0.65]
    \draw [gray!50, ultra thick] (0, -1) to (0, 1);
    \draw [gray!50, ultra thick] (-1, 0) to (1, 0);
    \draw [bend left=45, looseness=1.75, ultra thick, blue!75] (0, 1) to (-1, 0);
    \draw[dashed] (1,1) -- (-1,1) -- (-1,-1) -- (1,-1) -- (1, 1);
    \draw[ultra thick, blue!75] (0.2, 0) -- (1, 0);
    \draw[ultra thick, blue!75] (0.2, 0.1) -- (0.2, -0.1);
\end{tikzpicture} \quad
\begin{tikzpicture}[baseline={([yshift=-.5ex] current bounding box.center)}, scale=0.65]
    \draw [gray!50, ultra thick] (0, -1) to (0, 1);
    \draw [gray!50, ultra thick] (-1, 0) to (1, 0);
    \draw[ultra thick, blue!75] (0, 0.2) -- (0, 1);
    \draw[ultra thick, blue!75] (-0.1, 0.2) -- (0.1, 0.2);
    \draw[ultra thick, blue!75] (0.2, 0) -- (1, 0);
    \draw[ultra thick, blue!75] (0.2, 0.1) -- (0.2, -0.1);
    \draw[dashed] (1,1) -- (-1,1) -- (-1,-1) -- (1,-1) -- (1, 1);
\end{tikzpicture}.
\end{equation}
Then, SOWs can be sufficiently specified by vertex configuration generators that can yield all bulk and boundary vertex blocks upon rotation, reflection, and truncation.

Note that in order to have osculating vertices, crossings must be possible. Therefore, there must be at least 4 edges connected to a vertex. Since a vertex on the hexagonal lattice is only connected to three edges, there cannot be any crossings. This means that $\mathrm{SAW}_{\hexagon} = \mathrm{SOW}_{\hexagon}$. The term `osculating' is borrowed from \cite{jensen1998self}, who introduced it for closed paths, or `polygons'. These polygons are identified up to translations.

SOWs can also be motivated by considering domain walls. Consider boolean variables that live on the faces of the lattice. Whenever neighbouring booleans differ, one can assign a domain wall in the edge that separate the two faces (i.e. the one that is neighboured by both of them). If we want to draw the domain walls so that they do not cross, for connectivity higher than $3$, there are multiple ways of assigning them. Here, one could choose a convention. For example regions with $0$s (unfilled circles) could stay together while regions with $1$s (filled circles) stay apart and only osculate or `kiss':
\begin{equation}
    \begin{tikzpicture}[baseline={([yshift=-.5ex] current bounding box.center)}, scale=0.65]
    \draw[dashed] (1,1) -- (-1,1) -- (-1,-1) -- (1,-1) -- (1, 1);
    \fill[black] (-0.5,-0.5) circle (5pt);
    \fill[white] (-0.5,0.5) circle (5pt);
    \draw[thick] (-0.5,0.5) circle (5pt);
    \fill[white] (0.5,-0.5) circle (5pt);
    \draw[thick] (0.5,-0.5) circle (5pt);
    \fill[black] (0.5,0.5) circle (5pt);
	\draw [ultra thick, blue!75] (0, -1) to (0, 1);
        \draw [ultra thick, blue!75] (-1, 0) to (1, 0);
    \end{tikzpicture}
    \longrightarrow
    \begin{tikzpicture}[baseline={([yshift=-.5ex] current bounding box.center)}, scale=0.65]
    \draw[dashed] (1,1) -- (-1,1) -- (-1,-1) -- (1,-1) -- (1, 1);
    \fill[black] (-0.5,-0.5) circle (5pt);
    \fill[white] (-0.5,0.5) circle (5pt);
    \draw[thick] (-0.5,0.5) circle (5pt);
    \fill[white] (0.5,-0.5) circle (5pt);
    \draw[thick] (0.5,-0.5) circle (5pt);
    \fill[black] (0.5,0.5) circle (5pt);
		\node [] (0) at (0, 1) {};
		\node [] (1) at (-1, 0) {};
		\node [] (2) at (0, -1) {};
		\node [] (3) at (1, 0) {};
    \draw [gray!50, ultra thick] (0, -1) to (0, 1);
    \draw [gray!50, ultra thick] (-1, 0) to (1, 0);
		\draw [bend right=45, looseness=1.75, ultra thick, blue!75] (0.center) to (3.center);
		\draw [bend left=45, looseness=1.75, ultra thick, blue!75] (1.center) to (2.center);
    \end{tikzpicture}
    \rightarrow
    \begin{tikzpicture}[baseline={([yshift=-.5ex] current bounding box.center)}, scale=0.65]
    \node [] (0) at (0, 1) {};
    \node [] (1) at (-1, 0) {};
    \node [] (2) at (0, -1) {};
    \node [] (3) at (1, 0) {};
    \draw [gray!50, ultra thick] (0, -1) to (0, 1);
    \draw [gray!50, ultra thick] (-1, 0) to (1, 0);
    \draw [bend right=45, looseness=1.75, ultra thick, blue!75] (0.center) to (3.center);
    \draw [bend left=45, looseness=1.75, ultra thick, blue!75] (1.center) to (2.center);
    \draw[dashed] (1,1) -- (-1,1) -- (-1,-1) -- (1,-1) -- (1, 1);
    \end{tikzpicture}.
\end{equation}

We will refer to the restricted walk model specified by bulk vertex configurations generated by iterating through all possible configurations of boolean variables, then \textit{removing} the boolean variables, as osculating domain wall walks (ODWs). Again, truncating edges from these will generate the boundary vertex configurations. ODWs are subsets of SOWs.

For the square lattice, this procedure produces all vertex configurations of $\mathrm{SOW}_\square$. However, this is not true for the triangular lattice. Here, some vertex configurations allowed by crossings modified to be osculating are not generated by the above procedure. For example,
\begin{align}
    \begin{tikzpicture}[baseline={([yshift=-.5ex] current bounding box.center)}, scale=0.75]
    		\node [] (0) at (-0.5, {sqrt(3)/2}) {};
    		\node [] (1) at (0.5, {sqrt(3)/2}) {};
    		\node [] (2) at (1, 0) {};
    		\node [] (3) at (0.5, -{sqrt(3)/2}) {};
    		\node [] (4) at (-0.5, -{sqrt(3)/2}) {};
    		\node [] (5) at (-1, 0) {};
                \draw[ultra thick, gray!50] (1.center) to (4.center);
                \draw[ultra thick, gray!50] (2.center) to (5.center);
                \draw[ultra thick, gray!50] (0.center) to (3.center);
    		\draw[ultra thick, blue!75] (1.center) to (4.center);
    		\draw [bend left=300, looseness=2.50, ultra thick, blue!75] (2.center) to (3.center);
    		\draw [bend left=60, looseness=2.50, ultra thick, blue!75] (0.center) to (5.center);
            \draw[dashed] (0) -- (1) -- (2) -- (3) -- (4) -- (5) -- (0);
    \end{tikzpicture}
    \; \text{is not allowed by ODW, while} \;
    \begin{tikzpicture}[baseline={([yshift=-.5ex] current bounding box.center)}, scale=0.75]
		\node [] (0) at (-0.5, {sqrt(3)/2}) {};
		\node [] (1) at (0.5, {sqrt(3)/2}) {};
		\node [] (2) at (1, 0) {};
		\node [] (3) at (0.5, -{sqrt(3)/2}) {};
		\node [] (4) at (-0.5, -{sqrt(3)/2}) {};
		\node [] (5) at (-1, 0) {};
                \draw[ultra thick, gray!50] (1.center) to (4.center);
                \draw[ultra thick, gray!50] (2.center) to (5.center);
                \draw[ultra thick, gray!50] (0.center) to (3.center);
		\draw [bend left=300, looseness=2.50, ultra thick, blue!75] (2.center) to (3.center);
		\draw [bend right=60, looseness=2.50, ultra thick, blue!75] (4.center) to (5.center);
		\draw [bend right=60, looseness=2.50, ultra thick, blue!75] (0.center) to (1.center);
        \draw[dashed] (0) -- (1) -- (2) -- (3) -- (4) -- (5) -- (0);
                \node[circle, inner sep=0, draw=black, fill=black, minimum size=5, thick] at (0, {0.7*sqrt(3)/2}) {};
                \node[circle, inner sep=0, draw=black, fill=white, minimum size=5, thick] at (0, {-0.7*sqrt(3)/2}) {};
                \node[circle, inner sep=0, draw=black, fill=white, minimum size=5, thick] at ({0.7*sqrt(3)/2*sqrt(3)/2}, {0.7*sqrt(3)/2*1/2})  {};
                \node[circle, inner sep=0, draw=black, fill=black, minimum size=5, thick] at ({0.7*sqrt(3)/2*sqrt(3)/2}, {-0.7*sqrt(3)/2*1/2}){};
                \node[circle, inner sep=0, draw=black, fill=white, minimum size=5, thick] at ({-0.7*sqrt(3)/2*sqrt(3)/2}, {0.7*sqrt(3)/2*1/2}) {};
                \node[circle, inner sep=0, draw=black, fill=black, minimum size=5, thick] at ({-0.7*sqrt(3)/2*sqrt(3)/2}, {-0.7*sqrt(3)/2*1/2}) {};
    \end{tikzpicture}
    \longrightarrow
    \begin{tikzpicture}[baseline={([yshift=-.5ex] current bounding box.center)}, scale=0.75]
		\node [] (0) at (-0.5, {sqrt(3)/2}) {};
		\node [] (1) at (0.5, {sqrt(3)/2}) {};
		\node [] (2) at (1, 0) {};
		\node [] (3) at (0.5, -{sqrt(3)/2}) {};
		\node [] (4) at (-0.5, -{sqrt(3)/2}) {};
		\node [] (5) at (-1, 0) {};
                \draw[ultra thick, gray!50] (1.center) to (4.center);
                \draw[ultra thick, gray!50] (2.center) to (5.center);
                \draw[ultra thick, gray!50] (0.center) to (3.center);
		\draw [bend left=300, looseness=2.50, ultra thick, blue!75] (2.center) to (3.center);
		\draw [bend right=60, looseness=2.50, ultra thick, blue!75] (4.center) to (5.center);
		\draw [bend right=60, looseness=2.50, ultra thick, blue!75] (0.center) to (1.center);
        \draw[dashed] (0) -- (1) -- (2) -- (3) -- (4) -- (5) -- (0);
    \end{tikzpicture}
    \; \text{is.}
\end{align}
Therefore, $\rm{ODW}_\triangle \subset \rm{SOW}_\triangle$. However, they are equal for the square and hexagonal lattice, $\rm{ODW}_\square = \rm{SOW}_\square$, $\rm{ODW}_{\hexagon} = \rm{SOW}_{\hexagon}$. In physics, SOWs and ODWs have appeared recently in the context of two dimensional quantum error correcting codes \cite{pkim2025rigorous}.

Some of the natural questions about restricted walks are those involving the \emph{connective constant} $\mu$, which characterises how the number of such walks grow as the length of the walk $n$ goes to infinity. It is defined as $\mu := \lim_{n \rightarrow \infty} \mu_n$, where $\mu_n := c_n^{1/n}$ and $c_n$ is the number of such walks with length $n$. Because any walk of length $(n+m)$ can be decomposed into a walk of length $n$ and length $m$ but not all joining of walks of length $n$ and $m$ is a valid restricted walk of length $(n+m)$, $c_n c_m \geq c_{n+m}$ and the sequence $(\ln c_n)_{n}$ is subadditive. Therefore Fekete's lemma proves that $\mu$ exists.

Current state-of-the-art for rigorous bounds for connective constants for SAWs is $2.62002 \leq \mu^\rm{SAW}_\square \leq 2.66235$ on the square lattice \cite{couronne2022new}, and $4.57213 \leq \mu^\rm{SAW}_\cube \leq 4.73871$ for the cubic lattice \cite{hara1993new,ponitz2000improved}. As SOWs are supersets of SAWs, the lower bounds immediately apply to SOWs. Due to subadditivity of $\ln c_n$, $\mu \leq \mu_n \forall n$ for walks (which also holds for SOWs), a simple counting of numbers of SOWs can upper bound its connective constant. The first 18 are shown in \Cref{table:num-of-sow-square}. However, this is very inefficient. In \Cref{sec:automata}, we will adapt the `automata' method of \cite{ponitz2000improved}, which allows to throw away all walks with `loops' up to a desired perimeter. This allows us to obtain improved upper bounds for SOWs, as well as other restricted walks.

\begin{table}
    \centering
\begin{tabular}{r r r}
    $n$ & $c_n$ & Upper bound for $\mu_\square^\rm{SOW}$ \\
    \hline
    1 & 4 & 4.00000 \\
    2 & 12 & 3.46411 \\
    3 & 36 & 3.30193 \\
    4 & 108 & 3.22371 \\
    5 & 300 & 3.12914 \\
    6 & 860 & 3.08378 \\
    7 & 2404 & 3.04082 \\
    8 & 6772 & 3.01190 \\
    9 & 18772 & 2.98425 \\
    10 & 52268 & 2.96363 \\
    11 & 144180 & 2.94437 \\
    12 & 398756 & 2.92905 \\
    13 & 1095164 & 2.91458 \\
    14 & 3014244 & 2.90268 \\
    15 & 8252748 & 2.89139 \\
    16 & 22631804 & 2.88185 \\
    17 & 61811108 & 2.87276 \\
    18 & 169034836 & 2.86491
    \end{tabular}
    \caption{Enumeration of SOWs on the square lattice. Each count immediately gives an upper bound to the connective constant $\mu^\rm{SOW}_\square$.}
    \label{table:num-of-sow-square}
\end{table}

There is a hierarchy among these restricted walks. One can consider directed paths/walks (DPs), where the coordinates of subsequent vertex in the Cartesian representation in at least one dimension must always increase. Every DP is a NAW. Every NAW is a SAW. Every SAW is a SOW. Every SOW is a non-reversing walk (NRW), where vertices may be visited more than once but not immediately after. Every NRW is a random walk (RW). Therefore we have
\begin{align}
    \rm{DP}_n \subset \rm{NAW}_n \subset \rm{SAW}_n \subseteq \rm{ODW}_n \subseteq \rm{SOW}_n \subset \rm{EAW}_n \subset \rm{NRW}_n \subset \rm{RW}_n,
\end{align}
where $n$ is the length of the walk. Therefore, we can also bound their connective constants as
\begin{align}
    \mu^\rm{DP} \leq \mu^\rm{NAW} \leq \mu^\rm{SAW} \leq \mu^\rm{ODW} \leq \mu^\rm{SOW} \leq \mu^\rm{EAW} \leq \mu^\rm{NRW} < \mu^\rm{RW}.
\end{align}
On the $d$-dim hypercubic lattice, $\mu^\rm{DP} = d$, $\mu^\rm{NRW} = (2d-1)$, and $\mu^\rm{RW} = 2d$.

\subsection{Restricted surfaces and generalisations} \label{sec:restricted-manifolds-intro}

\begin{table}
    \centering
\begin{tabular}{|| r | r | r | r ||}
    \hline
    Acronym & Full name & Signature $(d, k)$ & Also known as \\
    \hline
    \hline
    SAW & Self-avoiding walks & $(d, 1)$ & 
    \begin{tabular}{r@{}}
         Self-avoiding polygons \\ for $h=0$
    \end{tabular} \\
    \hline
    SOW & Self-osculating walks & $(d, 1)$ & 
    \begin{tabular}{r@{}}
         Self-osculating polygons \\ for $h=0$
    \end{tabular}
    \\
    \hline
    SAS & Self-avoiding surfaces & $(d, 2)$ & \begin{tabular}{r@{}}
         Fixed polyominoes for $(2, 2)$ \\
         (Simply connected if $h=1$)
    \end{tabular} \\
    \hline
    SOS & Self-osculating surfaces & $(d, 2)$ & \\
    \hline
    XD & Fixed polyominoids & $(3, 2)$ & \\
    \hline
    SAM & Self-avoiding manifolds & $(d, k)$ & 
    \begin{tabular}{r@{}}
         Lattice animals or \\
         $d$-dim polycubes for $(d, d)$
    \end{tabular} \\
    \hline
    SOM & Self-osculating manifolds & $(d, k)$ & \\
    \hline
    $(d,k)$-XD & $(d,k)$-Fixed polyominoids & $(d, k)$ & \begin{tabular}{r@{}}
         Fixed polysticks if $(2, 1)$ \\
         Fixed polycubes if $(3, 3)$
    \end{tabular} \\
    \hline
    ODW & \begin{tabular}{r@{}}
         Osculating domain wall \\
         hypersurfaces
    \end{tabular} & $(d, d-1)$ & \\
    \hline
    \end{tabular}
    \caption{Table of restricted manifolds and their acronyms discussed in this work. $h$ is the number of boundary components; $h=0$ denotes closed walks / surfaces / manifolds, and $h=1$ denotes a single boundary component.}
    \label{table:acronyms}
\end{table} 

Similarly to restricted walks, we can also consider restricted surfaces. An example is self-avoiding surfaces (SASs). Here, a surface is a set of faces, each of which has unit area. For SASs, two faces are considered to be connected if they neighbour the same edge. Any surface in $\rm{SAS}$ must have a single connected component. The self-avoiding restriction is that an edge cannot be neighboured by more than two faces. Surfaces are identified up to translation. For the square lattice, they are equivalent to fixed polyominoes\footnote{The term `fixed' distinguishes from `free' polyominoes, where two polyominoes are identified also up to rotations and reflections.}. We can put additional restrictions on SASs. For example, we can consider self-avoiding surfaces with $h$ boundary components, $\rm{SAS}(h)$. $\rm{SAS}_\square(h=1)$, which has only one boundary component and therefore no holes, is also referred to as simply connected fixed polynominoes. On the other hand, a fixed polyominoid (XD)\footnote{i.e. ``fiXed polyominoiDs''. This is to avoid confusion with the zoo of `poly-' objects discussed in this paper.} is constructed out of square faces in the cubic lattice still with the restriction of having one connected component, but edges can be neighboured by more than two faces. Configurations are still identified up to translation.

Again, the natural question for restricted surfaces is to determine the existence and value of a growth constant $\mu$ where $c_n$ is defined with area $n$. The following statements were proved\footnote{With one minor error, which we correct in this work to rectify the overall proof.} by \cite{van1989self}: for SASs in $d$-dim hypercubic lattices, the growth constant exists. The growth constant $\mu^\rm{SAS}_{(d)}(h) \; \forall \; h$ exists for $d \geq 2$ (except for $h=0$ in $d=2$, which is an empty set). The growth constant for $h=0$ is strictly upper bounded by that of $h \geq 1$, $\mu^\rm{SAS}_{(d)}(0) < \mu^\rm{SAS}_{(d)}(1)$, and all those with $h \geq 1$ have equal growth constants $\mu_{(d)}^\rm{SAS}(h) = \mu_{(d)}^\rm{SAS}(1)$, $\forall h \geq 1$. This is because \cite{durhuus1983self} showed a simple counting argument which can be combined with arguments of \cite{van1989self} to prove that $\mu^\rm{SAS}_{(d)}(0) \leq (2d-3)$. Meanwhile, by considering `directed walk' configurations of SASs and arguments of \cite{van1989self}, one can show that $(2d-2) \leq \mu^\rm{SAS}_{(d)}(h) \; \forall \; h \geq 1$.

One can also define self-osculating surfaces (SOSs), where more than two faces can neighbour the same edge, as long as the edges are connected in such a way that the faces do not cross each other and only `osculate', which we will define in \Cref{sec:osculating}. When only two faces neighbour an edge, they are deemed to be connected. However, when more than two faces neighbour the same edge, the connections between the faces must be specified. Clearly, SOSs are a superset of SASs. They can be generated from XDs of (say) area $n$ as the following. First, one generates all possible XDs of area $n$. Then, whenever there are more than two faces neighbouring an edge, we replace it with a set of all possible osculating edges, which specify how the faces are connected. Then, we keep the configurations that have only one connected component. Two distinct SOSs may arise from one XD, if two osculating edges both give one connected component. An example is illustrated in \Cref{fig:xd-to-sos}.

\begin{figure}[H]
    \centering
\begin{align*}
\begin{tikzpicture}[baseline={([yshift=-.5ex] current bounding box.center)}, scale=0.75]
\draw[fill=SkyBlue, opacity=0.95]
(0,0,0) -- (1,0,0) -- (1,0,-1) -- (0,0,-1) -- cycle;
\draw[fill=SkyBlue, opacity=0.95]
(0,0,0) -- (0,0,-1) -- (0,1,-1) -- (0,1,0) -- cycle;
\draw[fill=SkyBlue, opacity=0.95]
(0,-1,0) -- (0,-1,-1) -- (0,0,-1) -- (0,0,0) -- cycle;
\draw[fill=SkyBlue, opacity=0.95]
(-1,0,0) -- (0,0,0) -- (0,0,-1) -- (-1,0,-1) -- cycle;

\draw[fill=SkyBlue, opacity=0.95]
(-1,1,0) -- (0,1,0) -- (0,1,-1) -- (-1,1,-1) -- cycle;
\draw[fill=SkyBlue, opacity=0.95]
(-2,1,0) -- (-1,1,0) -- (-1,1,-1) -- (-2,1,-1) -- cycle;
\draw[fill=SkyBlue, opacity=0.95]
(-1,-1,0) -- (0,-1,0) -- (0,-1,-1) -- (-1,-1,-1) -- cycle;
\draw[fill=SkyBlue, opacity=0.95]
(-2,-1,0) -- (-1,-1,0) -- (-1,-1,-1) -- (-2,-1,-1) -- cycle;
\draw[fill=SkyBlue, opacity=0.95]
(-2,0,0) -- (-2,0,-1) -- (-2,1,-1) -- (-2,1,0) -- cycle;
\draw[fill=SkyBlue, opacity=0.95]
(-2,0,0) -- (-2,0,-1) -- (-2,-1, -1) -- (-2, -1, 0) -- cycle;
\end{tikzpicture}
\quad \longrightarrow \quad
\begin{tikzpicture}[baseline={([yshift=-.5ex] current bounding box.center)}, scale=0.75]
\draw [rounded corners, fill=SkyBlue, opacity=0.95]
(0,1,0) -- (0,0,0) -- (1,0,0) -- (1,0,-1) -- (0,0,-1) -- (0,1,-1);
\draw[opacity=0.05] (0+0.05,0+0.05,0) -- (0+0.05,0+0.05,-1);

\draw [rounded corners, fill=SkyBlue, opacity=0.95]
(0,-1,0) -- (0,0,0) -- (-1,0,0) -- (-1,0,-1) -- (0,0,-1) -- (0,-1,-1);
\draw (0-0.05,0-0.05,0) -- (0-0.05,0-0.05,-1);

\draw[fill=SkyBlue, opacity=0.8]
(-1,1,0) -- (0,1,0) -- (0,1,-1) -- (-1,1,-1) -- cycle;
\draw[fill=SkyBlue, opacity=0.95]
(-2,1,0) -- (-1,1,0) -- (-1,1,-1) -- (-2,1,-1) -- cycle;
\draw[fill=SkyBlue, opacity=0.95]
(-1,-1,0) -- (0,-1,0) -- (0,-1,-1) -- (-1,-1,-1) -- cycle;
\draw[fill=SkyBlue, opacity=0.95]
(-2,-1,0) -- (-1,-1,0) -- (-1,-1,-1) -- (-2,-1,-1) -- cycle;
\draw[fill=SkyBlue, opacity=0.95]
(-2,0,0) -- (-2,0,-1) -- (-2,1,-1) -- (-2,1,0) -- cycle;
\draw[fill=SkyBlue, opacity=0.95]
(-2,0,0) -- (-2,0,-1) -- (-2,-1, -1) -- (-2, -1, 0) -- cycle;
\end{tikzpicture}
\quad\quad
\begin{tikzpicture}[baseline={([yshift=-.5ex] current bounding box.center)}, scale=0.75]
\draw [rounded corners, fill=SkyBlue, opacity=0.95]
(0,1,0) -- (0,0,0) -- (-1,0,0) -- (-1,0,-1) -- (0,0,-1) -- (0,1,-1);
\draw[opacity=0.95] (0+0.05,0-0.05,0) -- (0+0.05,0-0.05,-1);

\draw [rounded corners, fill=SkyBlue, opacity=0.95]
(0,-1,0) -- (0,0,0) -- (1,0,0) -- (1,0,-1) -- (0,0,-1) -- (0,-1,-1);
\draw[opacity=0.95] (0-0.05,0+0.05,0) -- (0-0.05,0+0.05,-1);

\draw[rounded corners, opacity=0.95, fill=SkyBlue] (0,0.1,0) -- (0,0,0) -- (-1,0,0) -- (-1,0,-1) -- (0,0,-1) -- (0,0.1,-1);

\draw[rounded corners, opacity=0.95, fill=SkyBlue, draw=none] (0.1,0,0)--(0,0,0)--(0,-0.9,0)--(0,-0.9,-1)--(0,0,-1)--(0.1,0,-1);

\draw[fill=SkyBlue, opacity=0.8]
(-1,1,0) -- (0,1,0) -- (0,1,-1) -- (-1,1,-1) -- cycle;
\draw[fill=SkyBlue, opacity=0.95]
(-2,1,0) -- (-1,1,0) -- (-1,1,-1) -- (-2,1,-1) -- cycle;
\draw[fill=SkyBlue, opacity=0.95]
(-1,-1,0) -- (0,-1,0) -- (0,-1,-1) -- (-1,-1,-1) -- cycle;
\draw[fill=SkyBlue, opacity=0.95]
(-2,-1,0) -- (-1,-1,0) -- (-1,-1,-1) -- (-2,-1,-1) -- cycle;
\draw[fill=SkyBlue, opacity=0.95]
(-2,0,0) -- (-2,0,-1) -- (-2,1,-1) -- (-2,1,0) -- cycle;
\draw[fill=SkyBlue, opacity=0.95]
(-2,0,0) -- (-2,0,-1) -- (-2,-1, -1) -- (-2, -1, 0) -- cycle;
\end{tikzpicture}
\end{align*}
    
\caption{Example of generating a self-osculating surface from a fixed polyominoid. Choosing different connections may result in distinct self-osculating surfaces.}
\label{fig:xd-to-sos}
\end{figure}
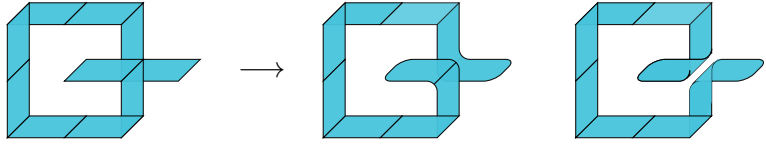

The above models can be considered to be `edge models', in the sense that apart from the single connected component condition, the other conditions are defined on edges. Other kinds of restricted surface vertex models can be created by considering vertex configurations. For example, similarly to ODWs in $d=2$, in the cubic lattice, consider a vertex. This vertex is neighboured by 8 cubes, 12 faces, and 6 edges. Place boolean variables on each of the cubes, and consider a configuration of booleans. If the boolean variables on two neighbouring cubes differ, `turn on' (or occupy) the face that separates them. To decide on the osculating structure, consider each edge neighbouring the vertex, which neighbours 4 faces (each of which neighbours the vertex). These 4 faces are boundaries of 4 of the neighbouring cubes of the vertex. This is the same as the $d=2$ case, where the 4 edges were boundaries of the 4 of the neighbouring faces of the vertex. Therefore, we can use the same osculation rules. Removing the boolean variables, we retrieve a restricted surface model defined by its bulk vertex configurations, which we will similarly refer to as $\mathrm{ODW}_\cube$. Its bulk vertices are shown in \Cref{table:bulk-sos-vertices-cube}.

\begin{table}[]
    \centering
\begin{tabular}{c c c c}
        Index & Vertex & Area & In SAS? \\
        \hline
        1 & \raisebox{-.45\height}{\includegraphics{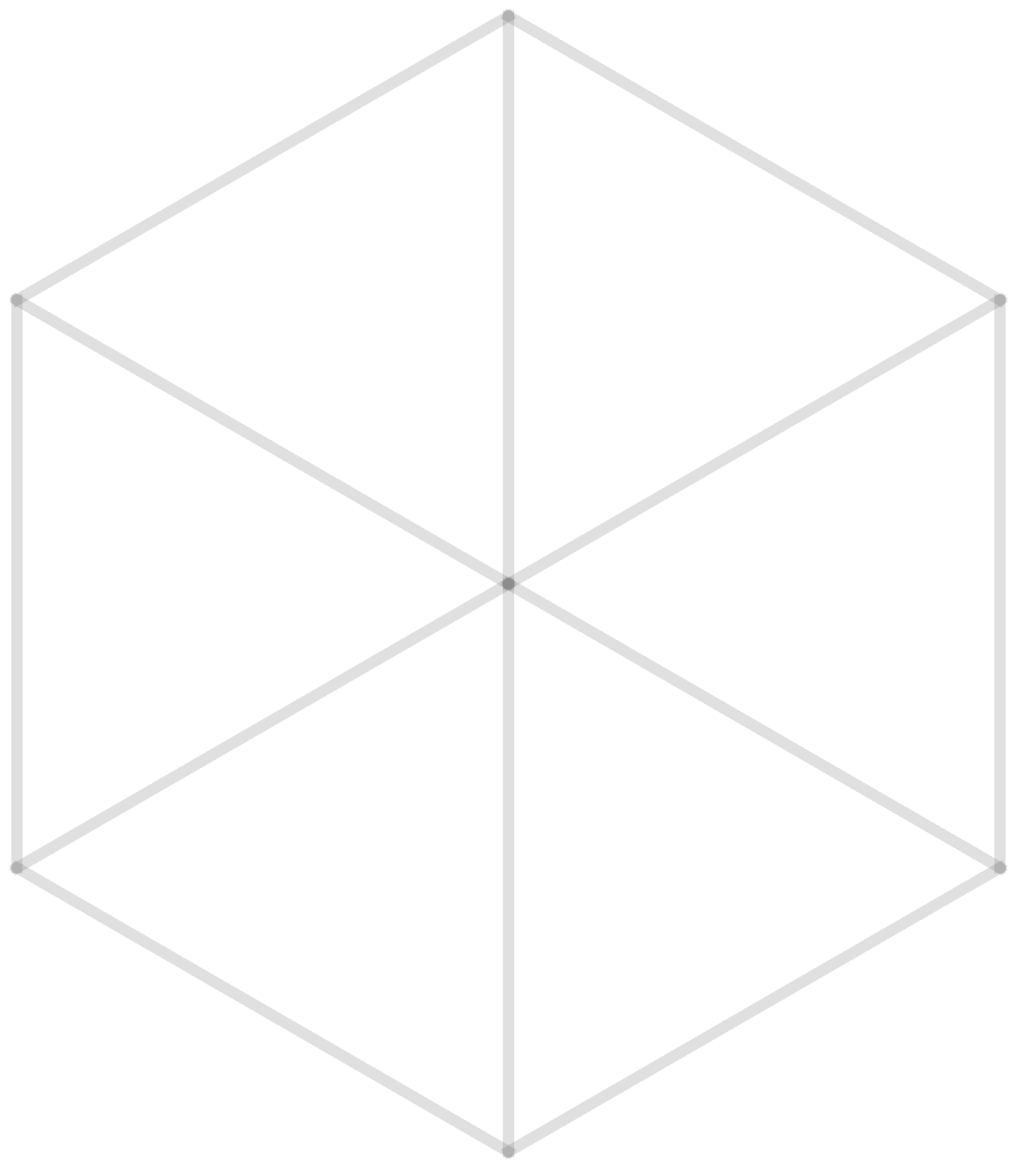}} & $0$ & Yes \\
        2 & \raisebox{-.45\height}{\includegraphics{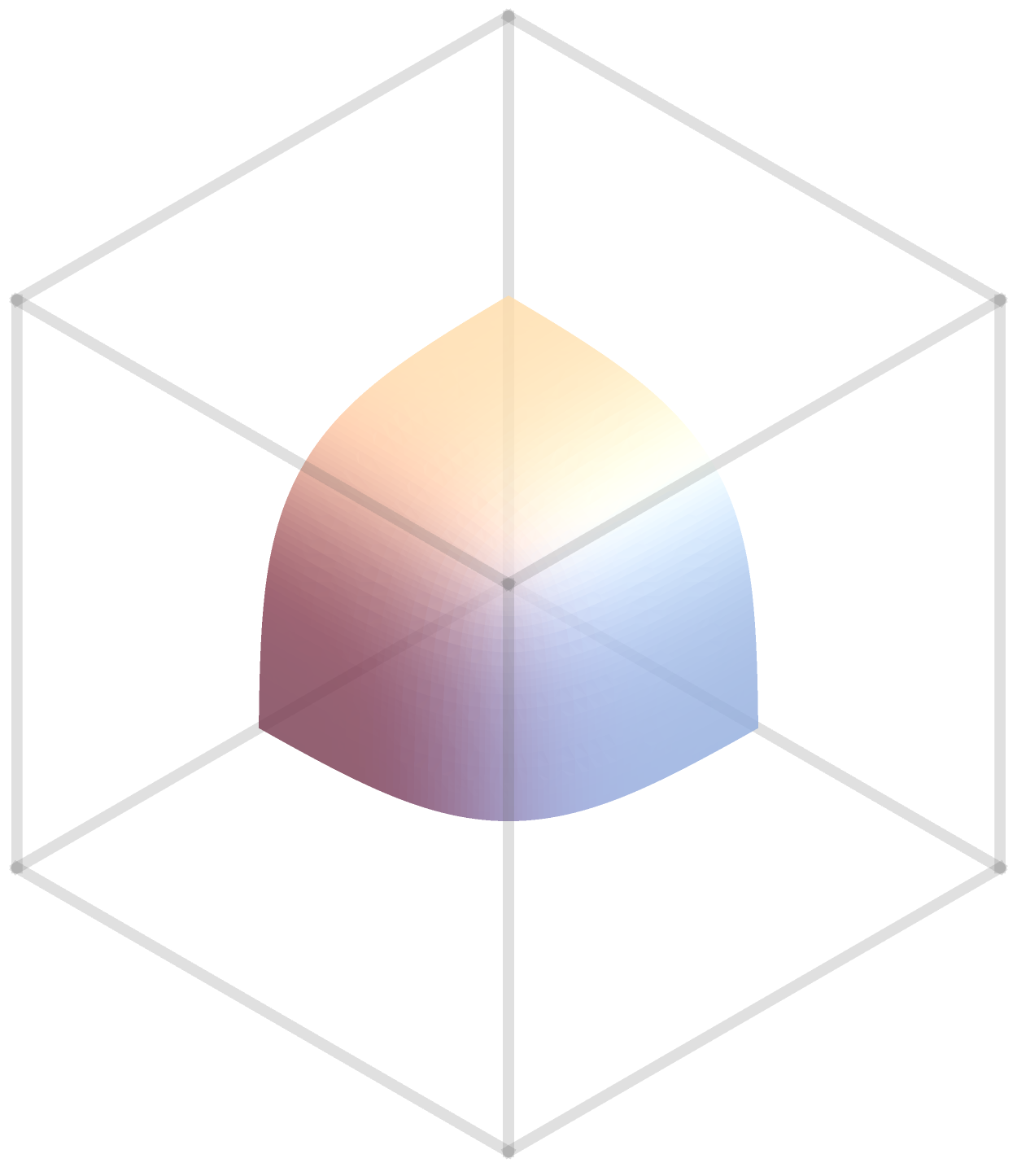}} & $3/4$ & Yes \\
        3 & \raisebox{-.45\height}{\includegraphics{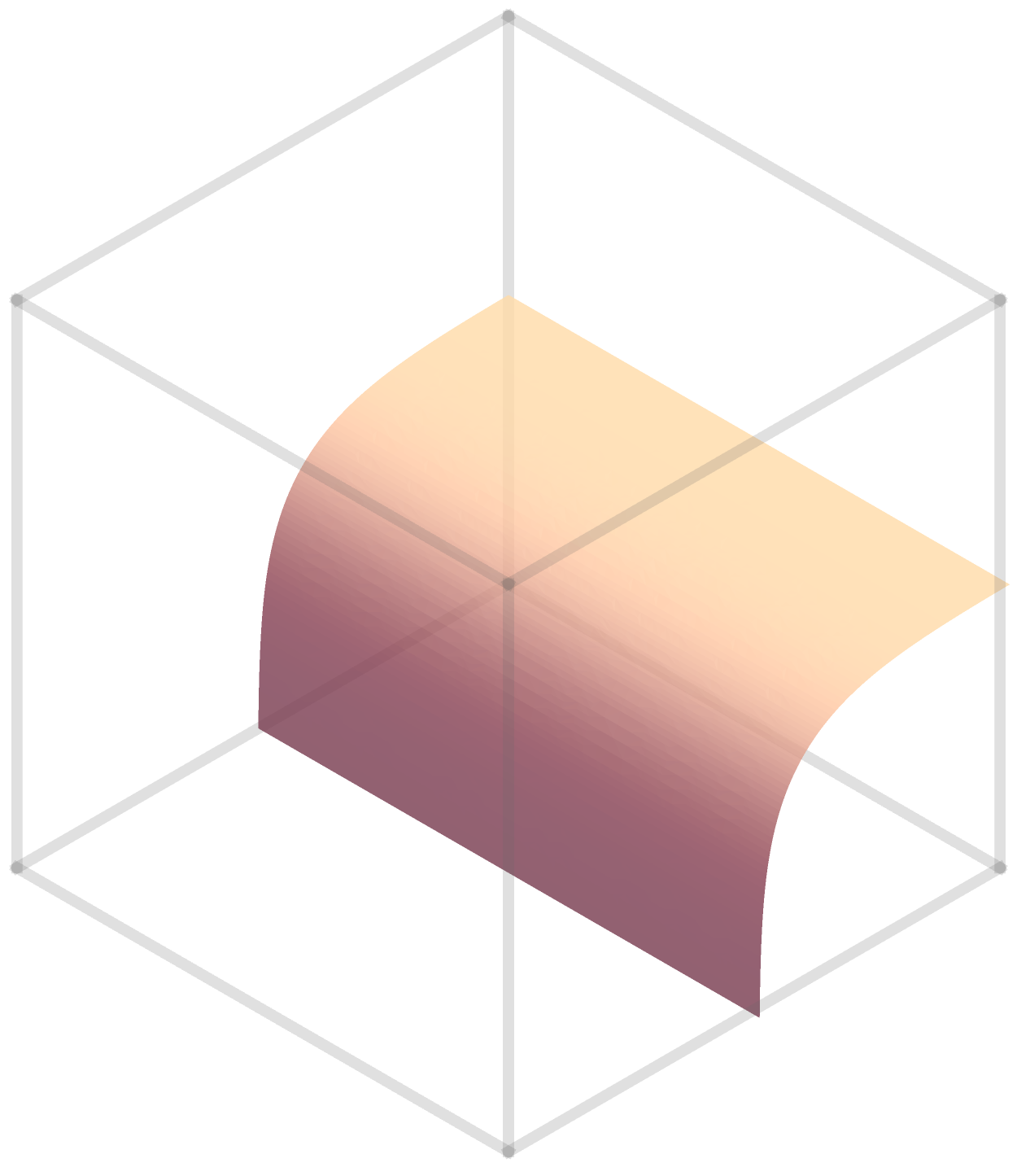}} & $1$ & Yes \\
        4 & \raisebox{-.45\height}{\includegraphics{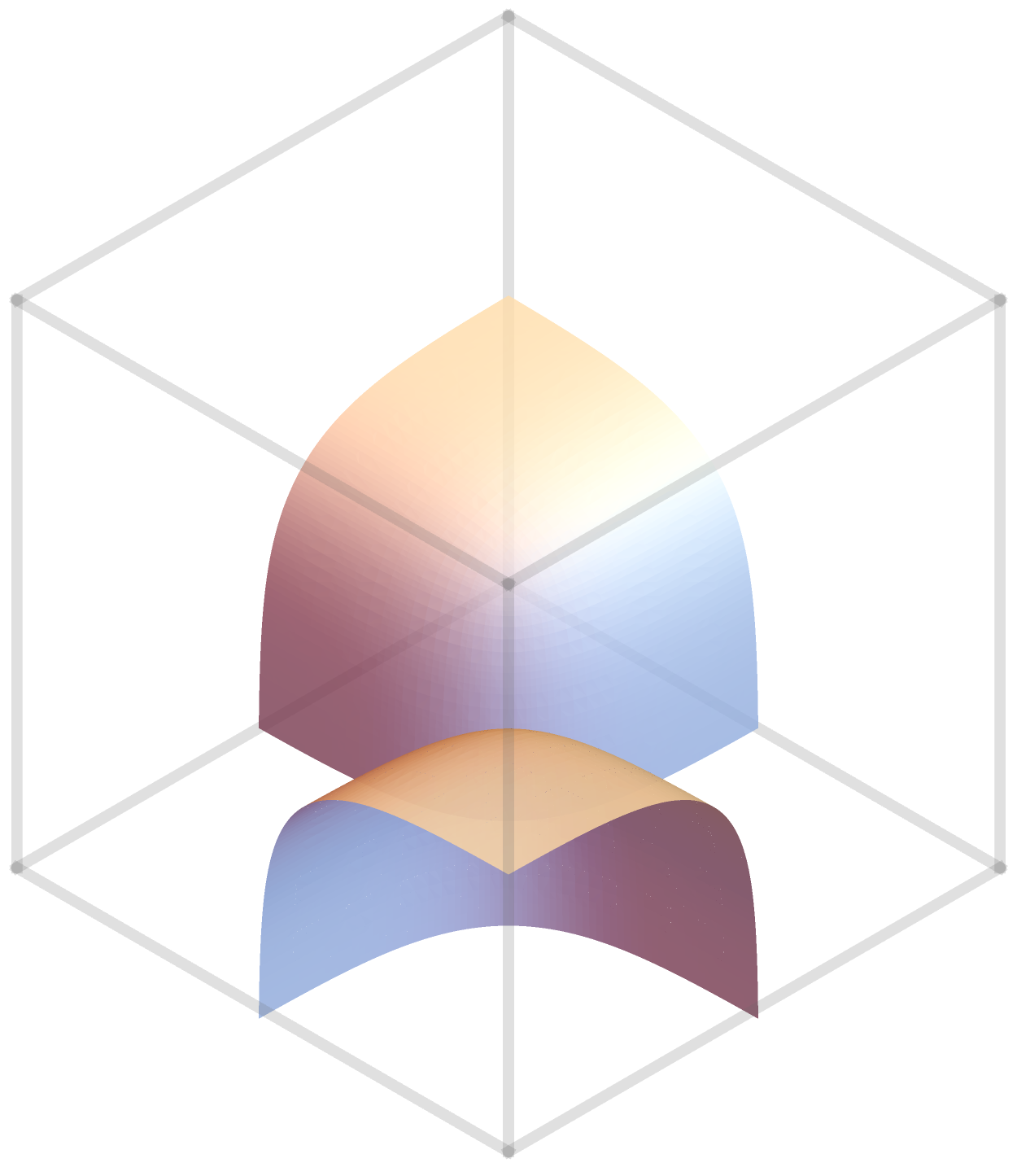}} & $6/4$ & No \\
        5 & \raisebox{-.45\height}{\includegraphics{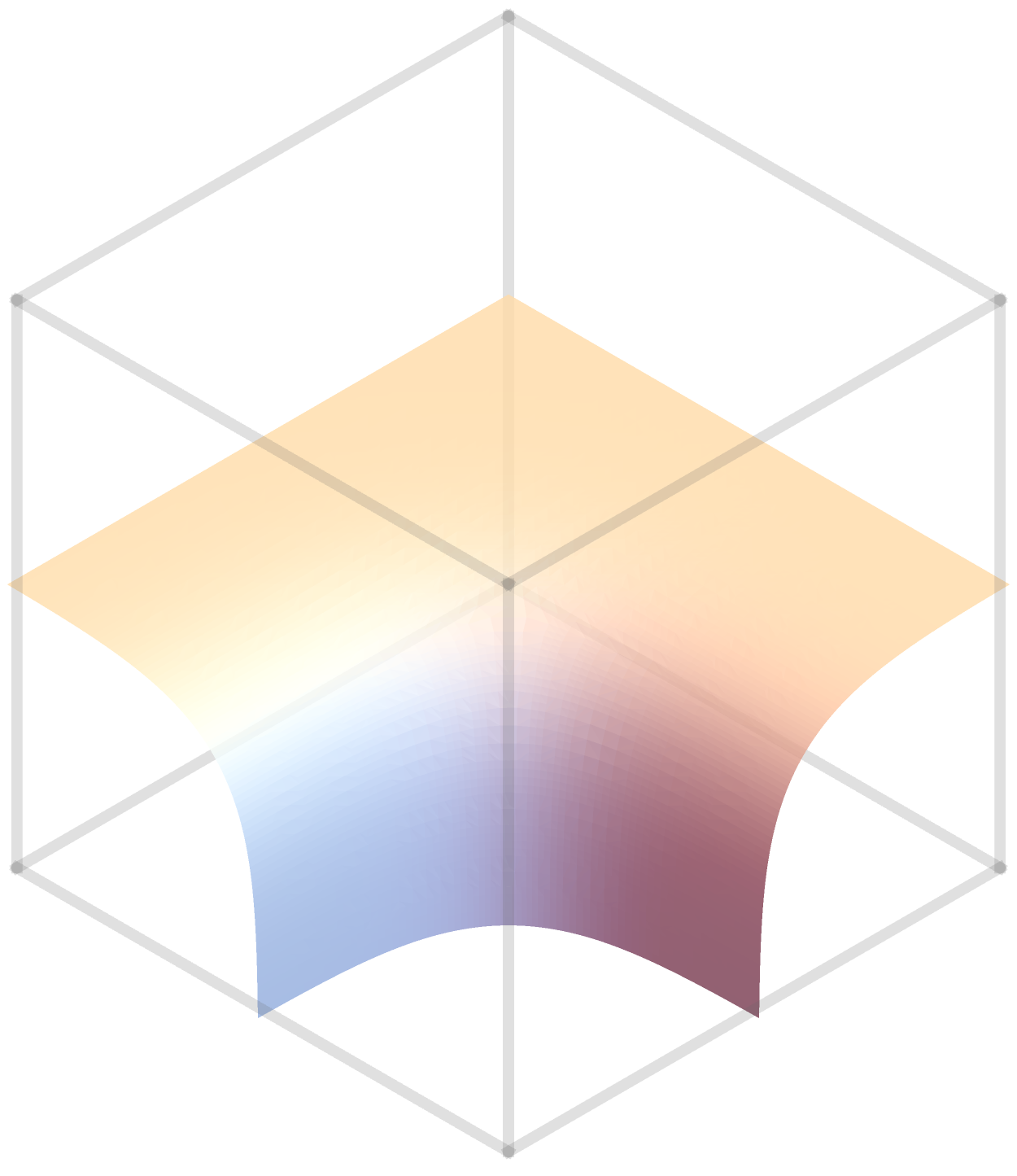}} & $5/4$ & Yes \\
        6 & \raisebox{-.45\height}{\includegraphics{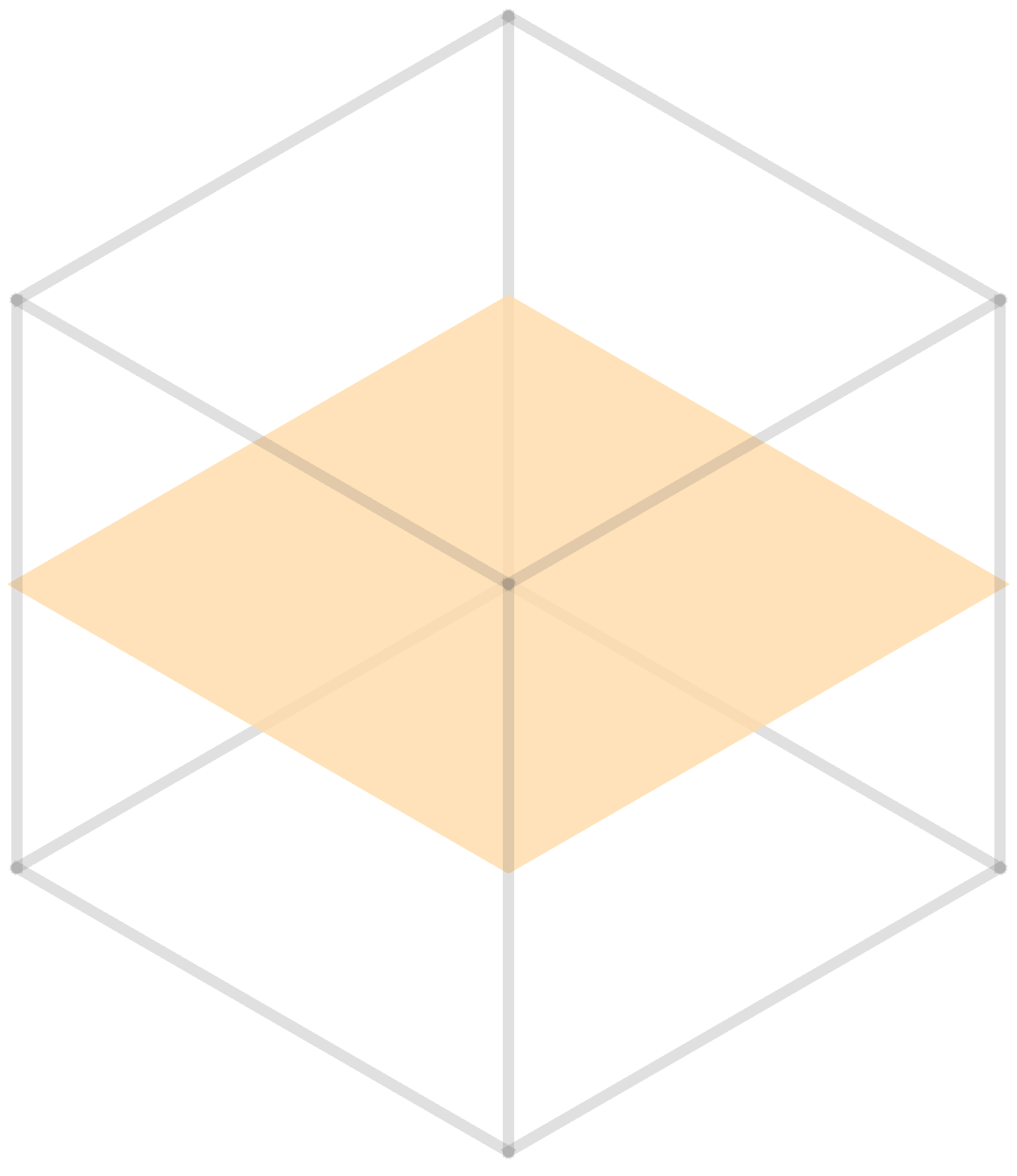}} & $1$ & Yes \\
        7 & \raisebox{-.45\height}{\includegraphics{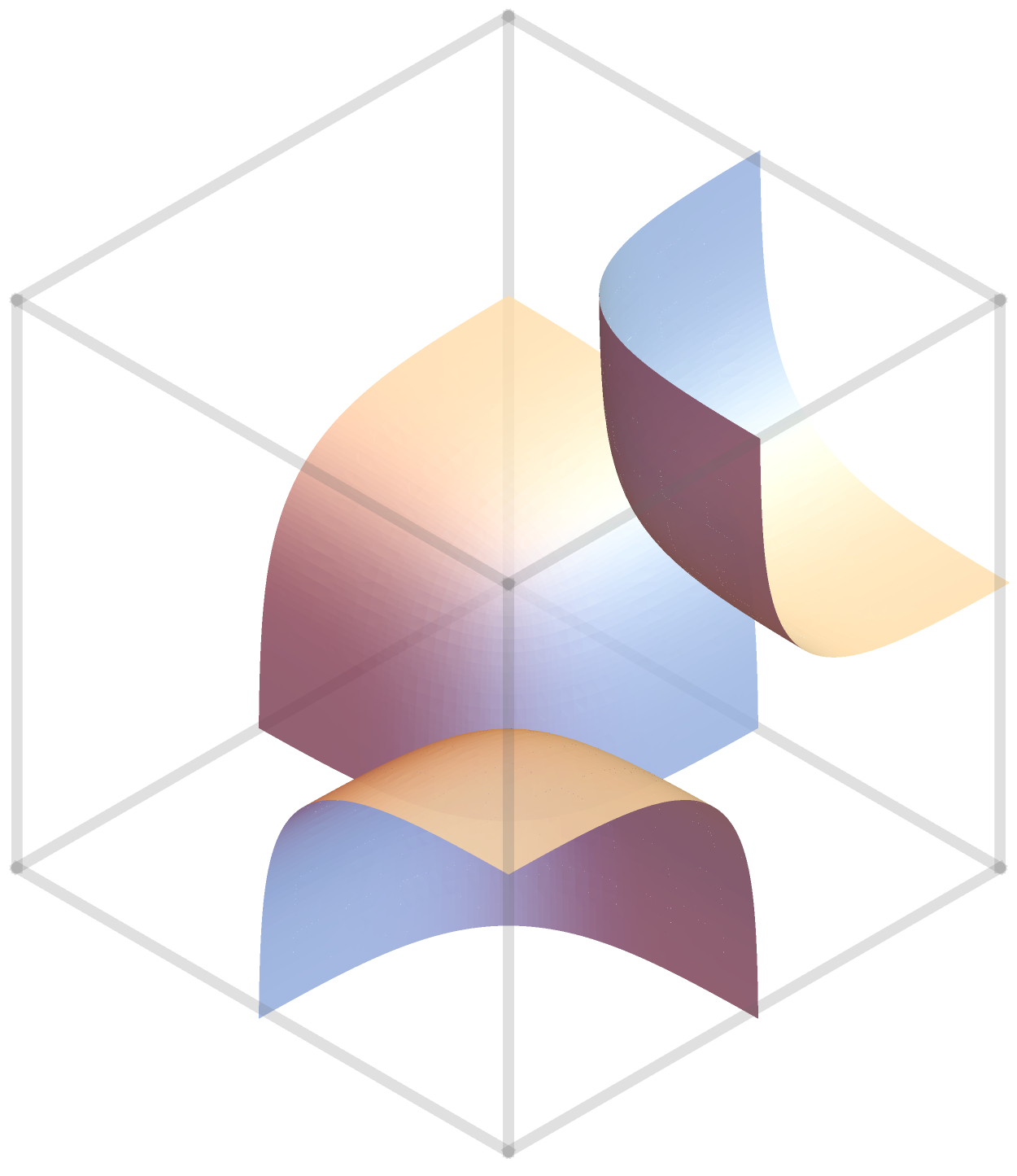}} & $9/4$ & No \\
        8 & \raisebox{-.45\height}{\includegraphics{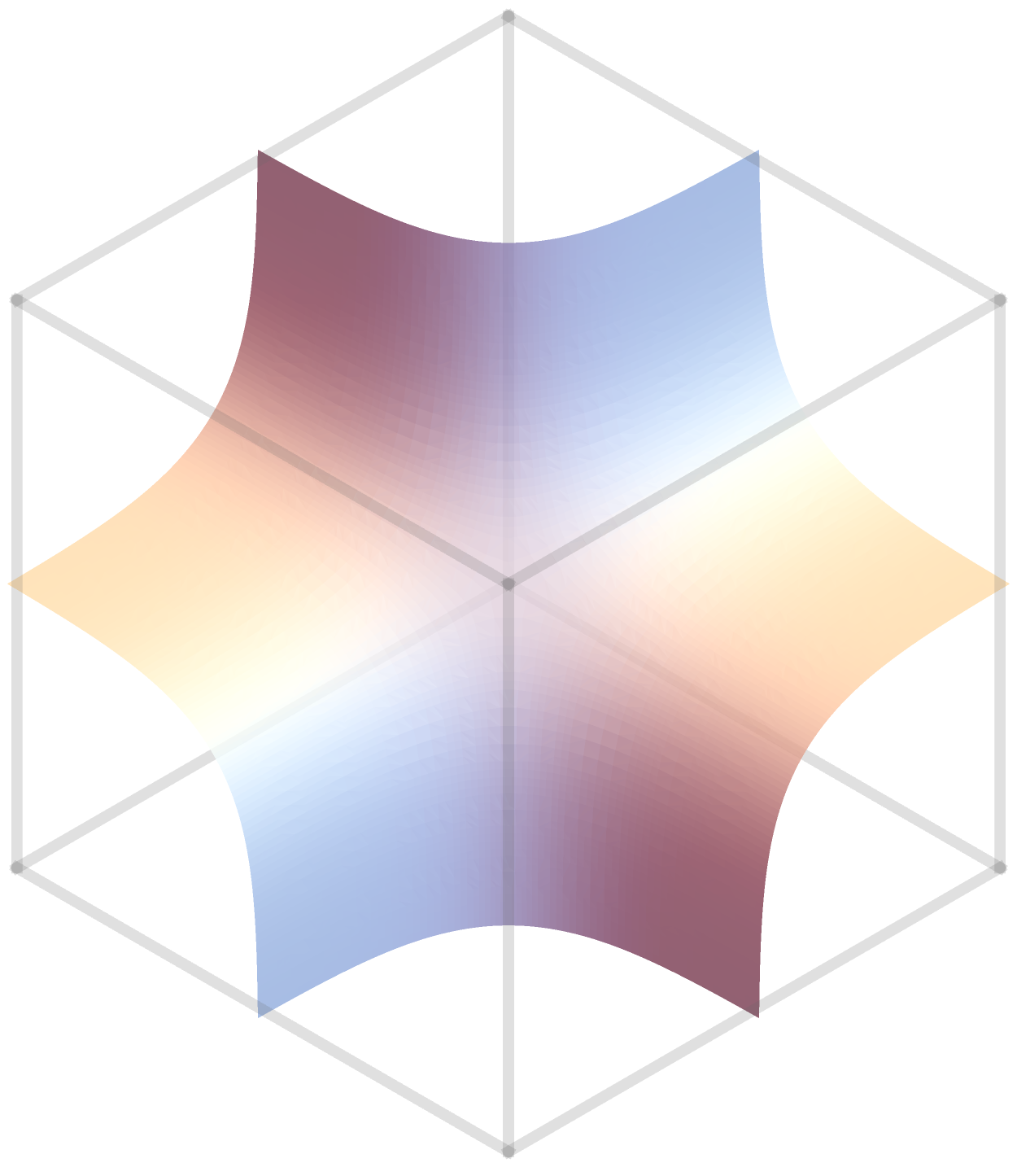}} & $6/4$ & Yes \\
        9 & \raisebox{-.45\height}{\includegraphics{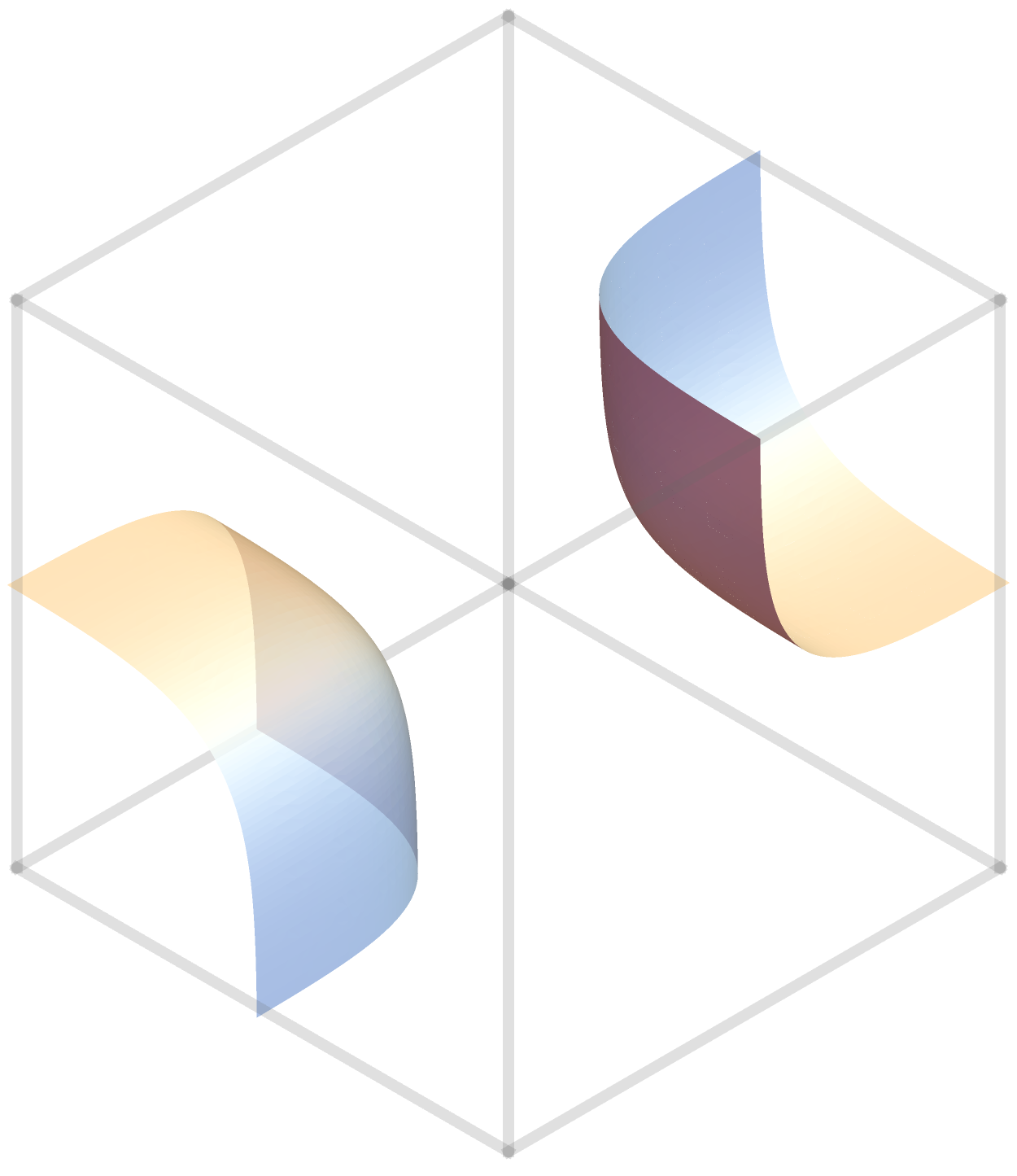}} & $6/4$ & No \\
    \end{tabular} \quad
    \begin{tabular}{c c c c}
        Index & Vertex & Area & In SAS? \\
        \hline
        10 & \raisebox{-.45\height}{\includegraphics{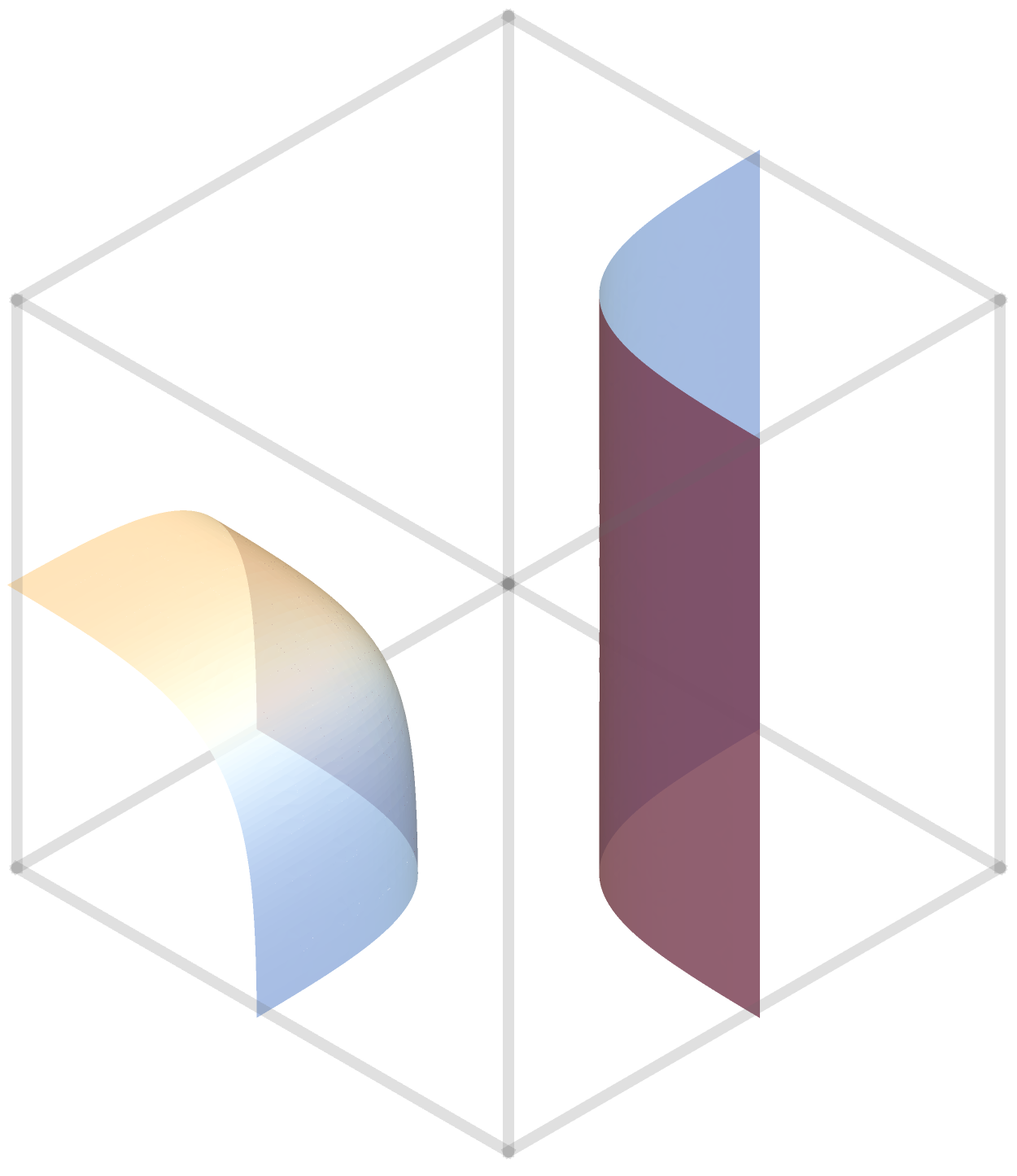}} & $7/4$ & No \\
        11 & \raisebox{-.45\height}{\includegraphics{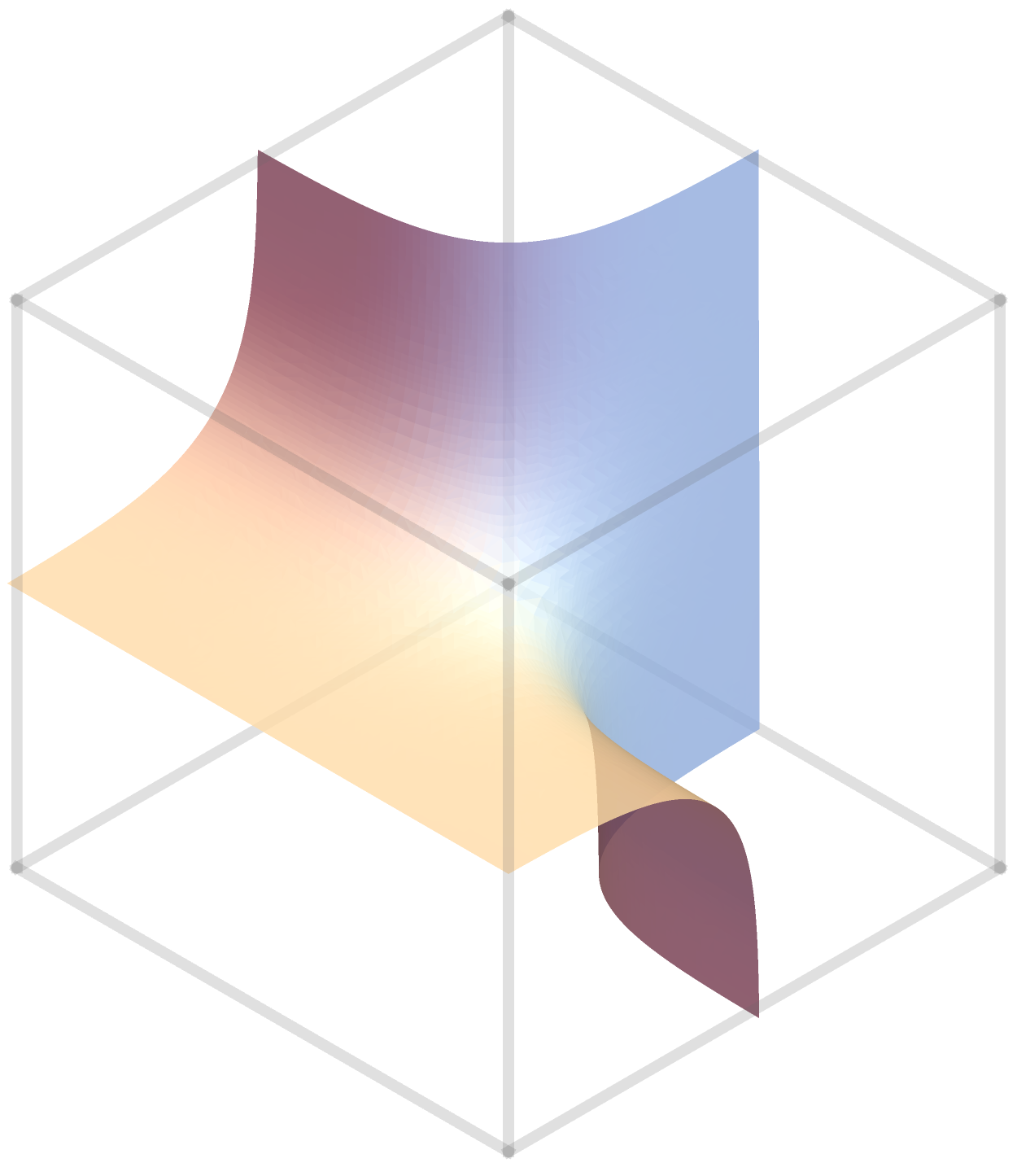}} & $6/4$ & Yes \\
        12 & \raisebox{-.45\height}{\includegraphics{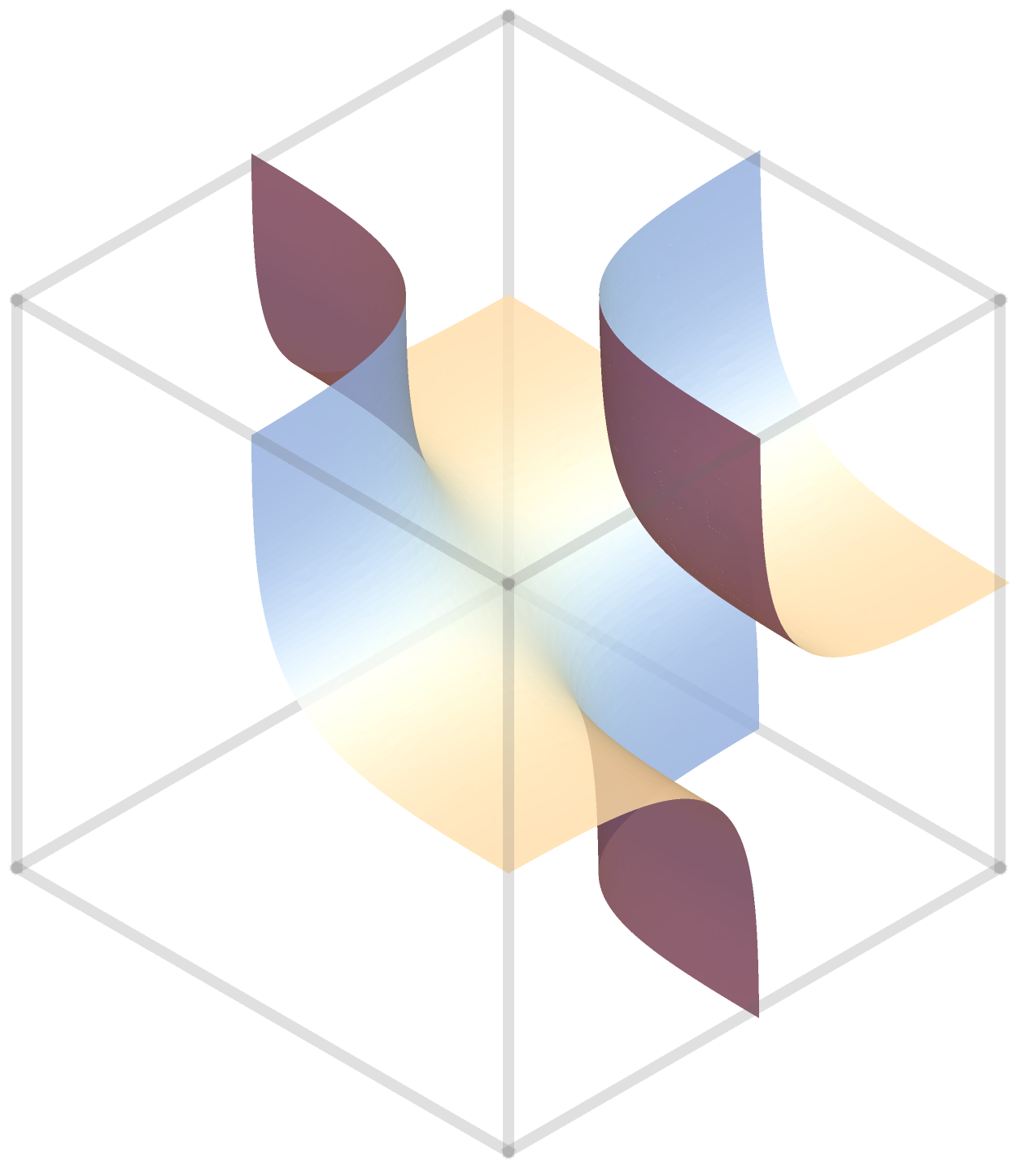}} & $9/4$ & No \\
        13 & \raisebox{-.45\height}{\includegraphics{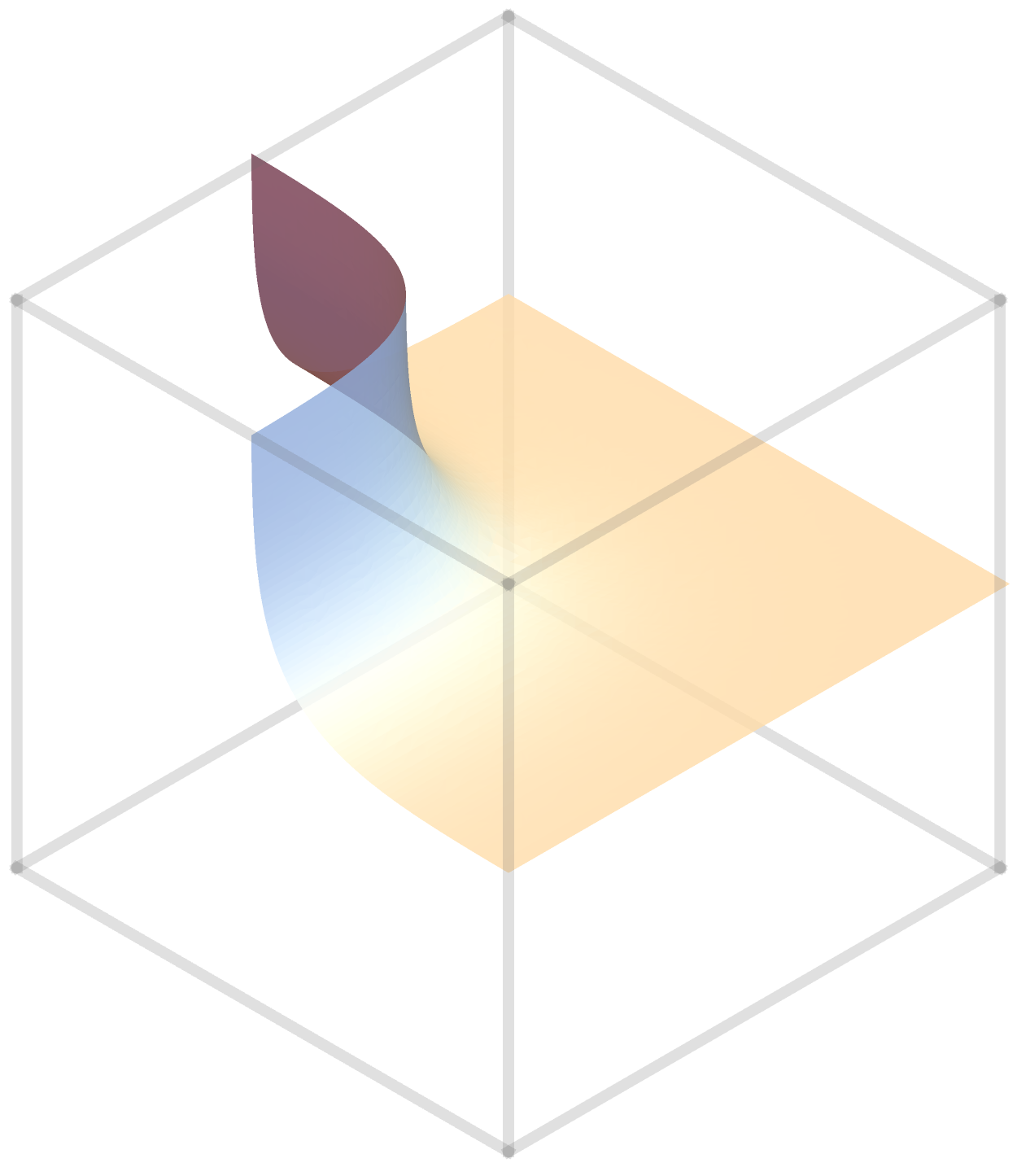}} & $5/4$ & Yes \\
        14 & \raisebox{-.45\height}{\includegraphics{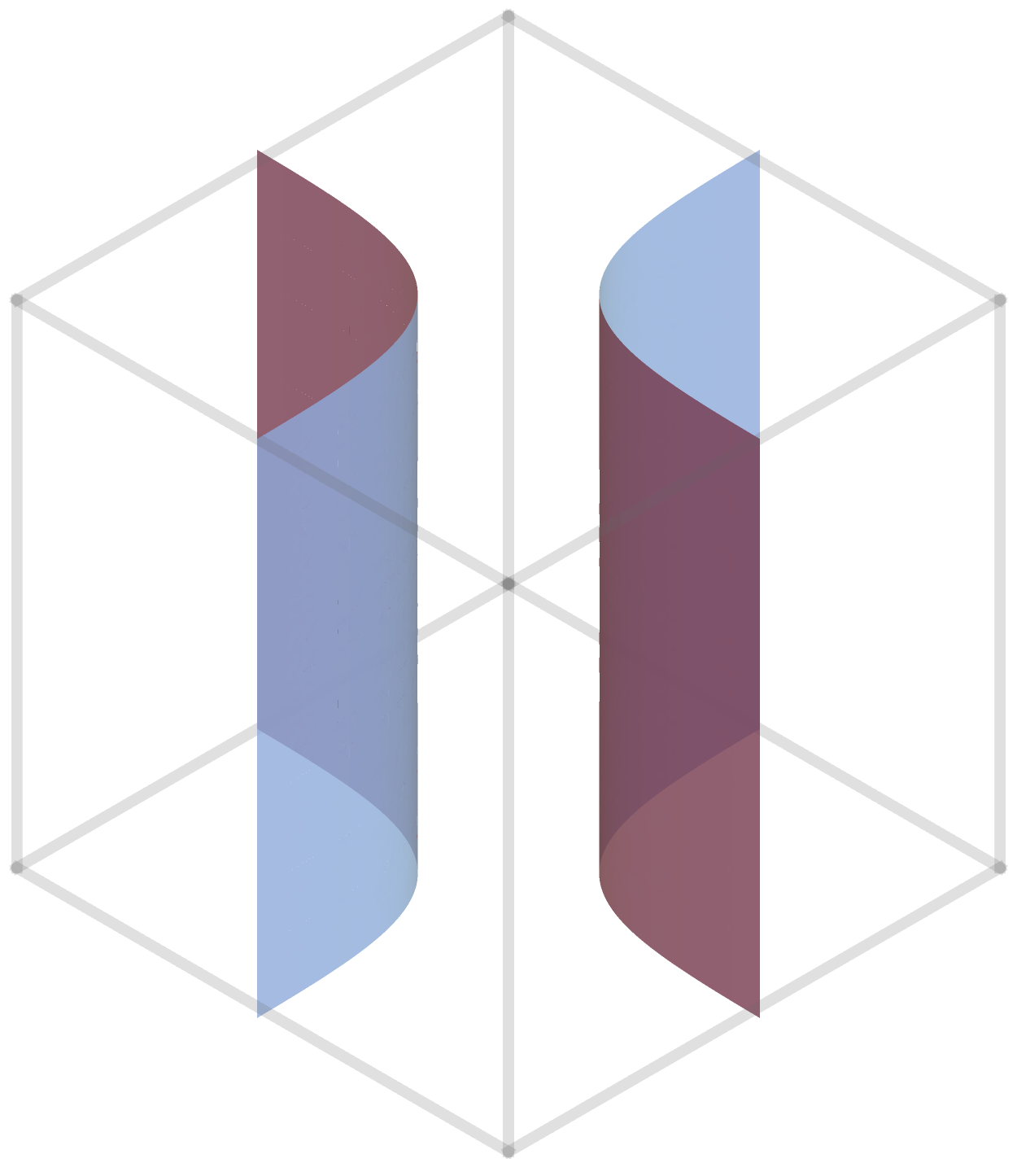}} & $2$ & No \\
        15 & \raisebox{-.45\height}{\includegraphics{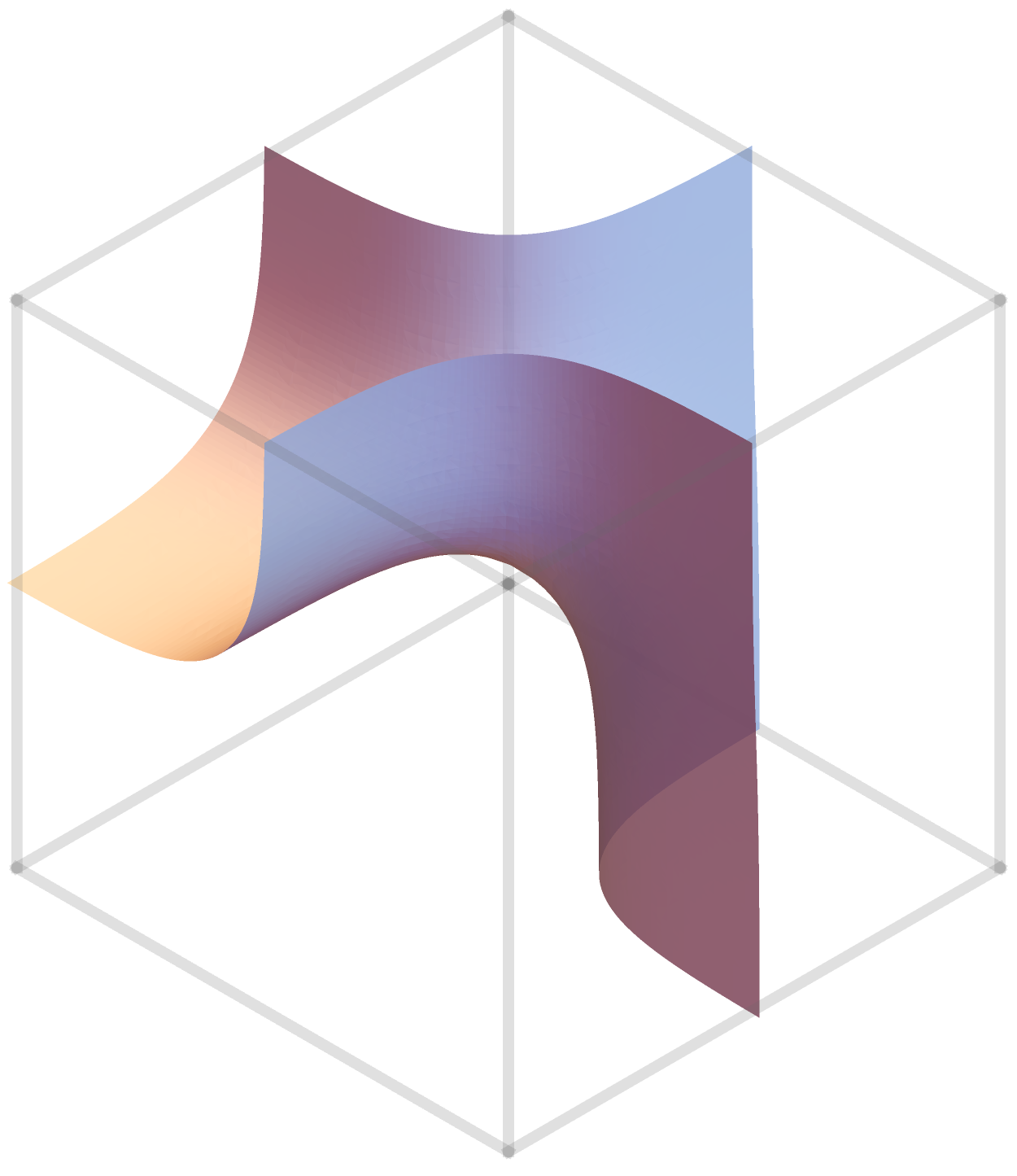}} & $7/4$ & No \\
        16 & \raisebox{-.45\height}{\includegraphics{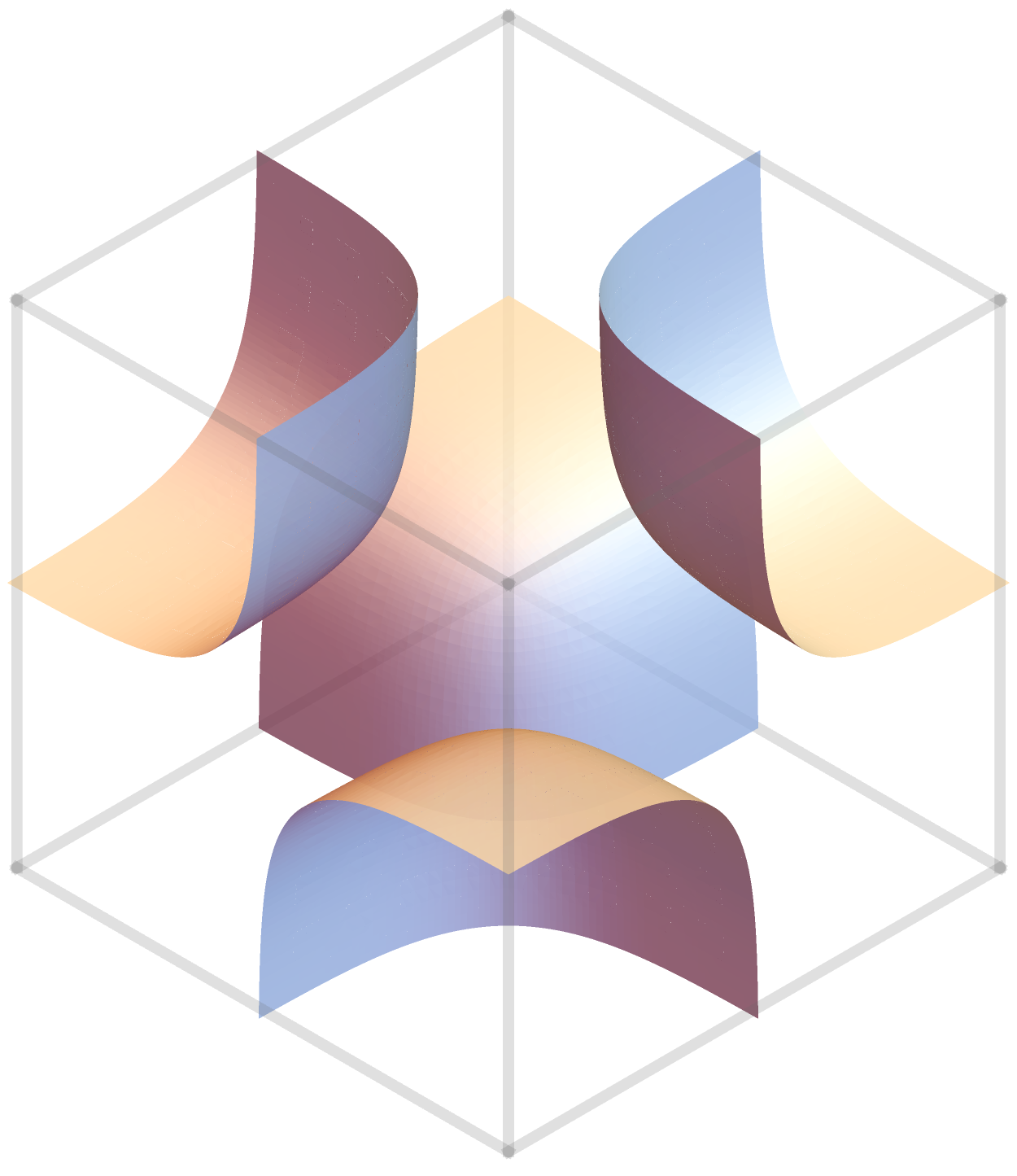}} & $3$ & No \\
        17 & \raisebox{-.45\height}{\includegraphics{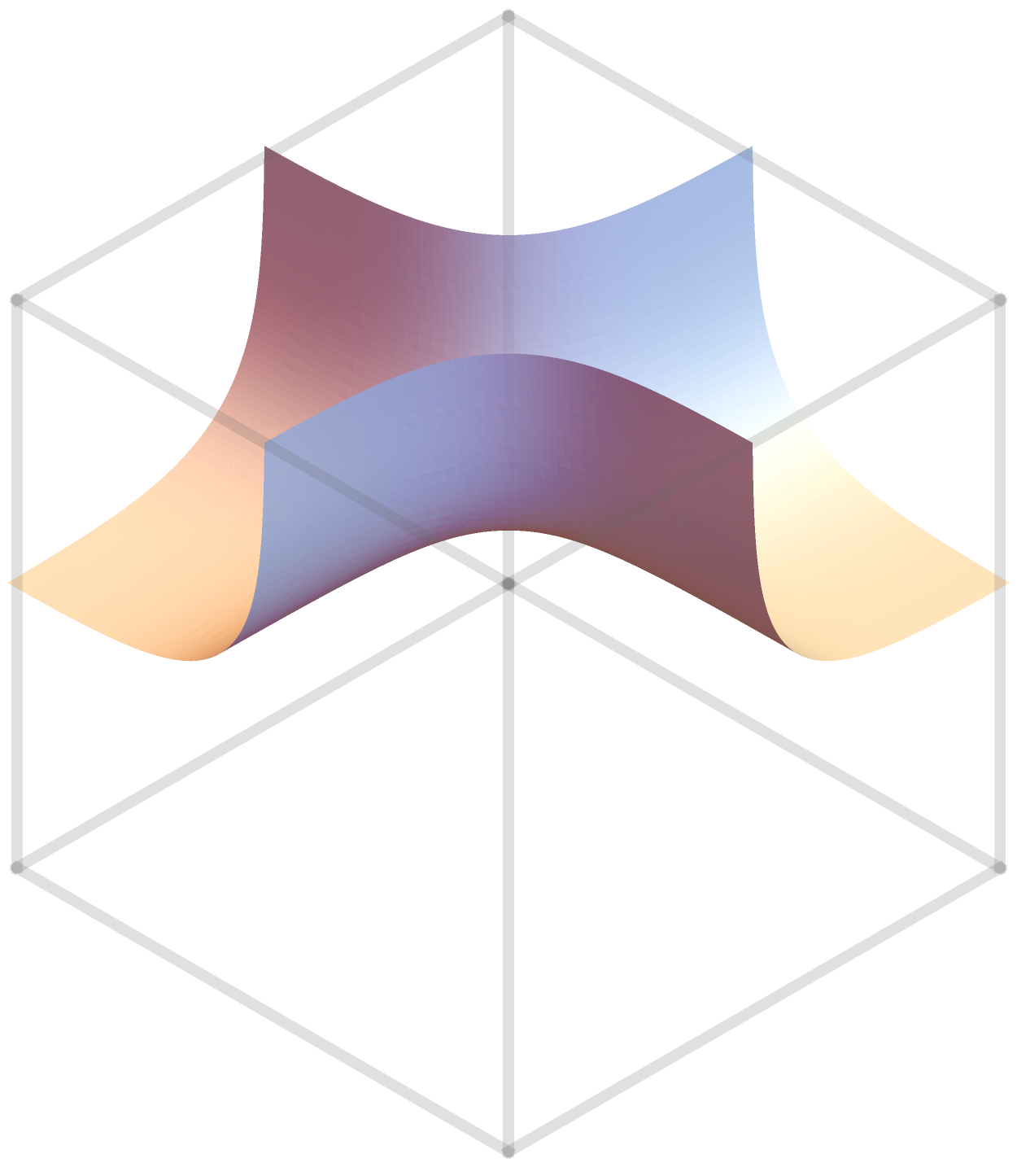}} & $6/4$ & No \\
        18 & \raisebox{-.45\height}{\includegraphics{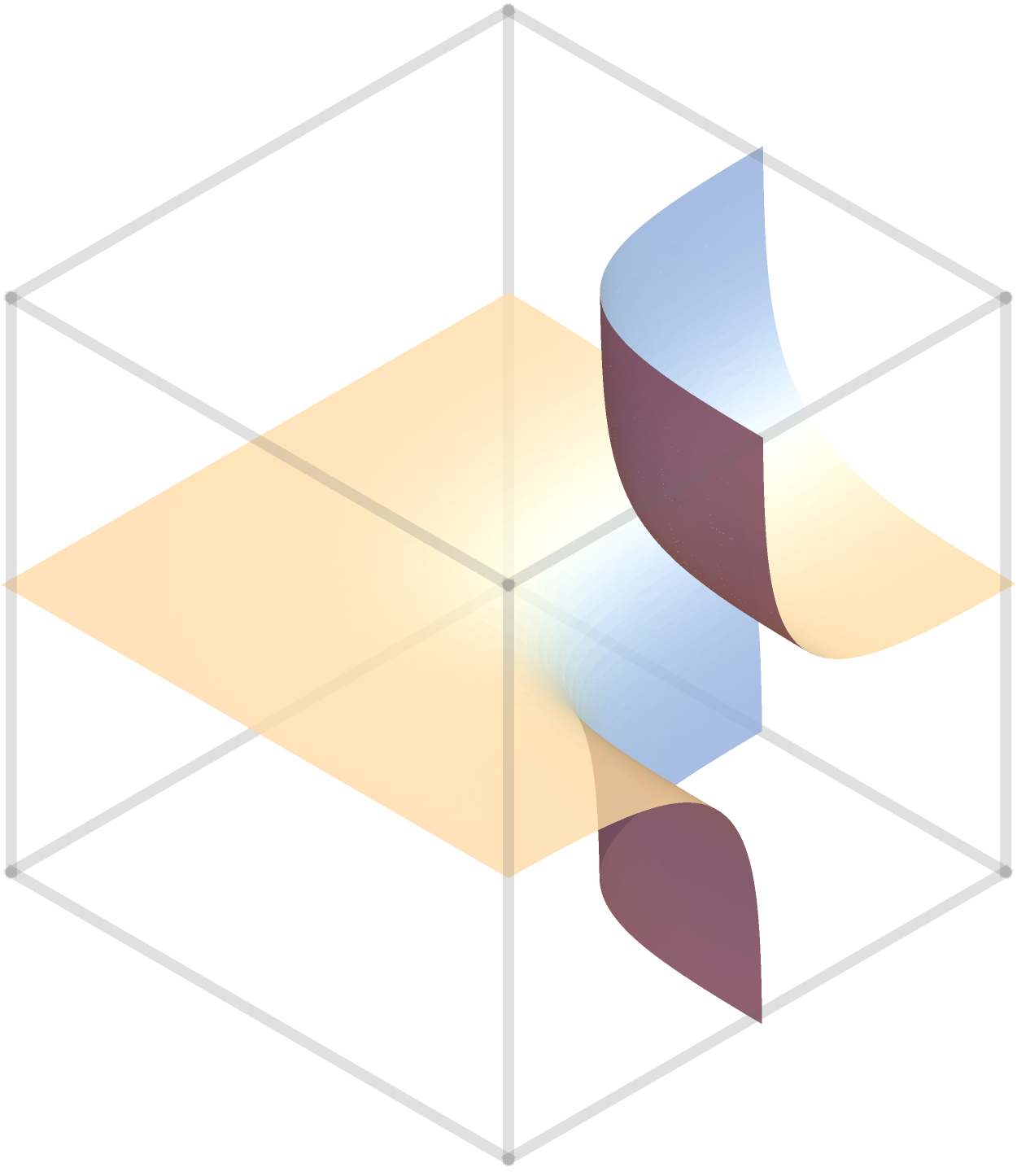}} & $2$ & No \\
    \end{tabular}
    \caption{Bulk vertex `blocks' of $\mathrm{ODW}_\cube$ up to rotations and reflections. Boundary vertex configurations can be obtained by truncating parts of faces from them.}
    \label{table:bulk-sos-vertices-cube}
\end{table}

Next, one can study further generalisations of restricted $k$-dim `manifolds' in $d$-dim hypercubic lattices, where $k \leq d$.

We can define self-avoiding $k$-dim manifolds (SAMs) on $d$-dim hypercube lattice. A $k$-manifold is defined as a set of $k$-faces. Each $k$-face has unit $k$-area. Two $k$-faces are considered to be connected if they neighbour the same $(k-1)$-edge. Any $k$-manifold in $\rm{SAM}$ must have a single connected component. The self-avoiding restriction is that a $(k-1)$-edge cannot be neighboured by more than two $k$-faces. $k$-manifolds are identified up to translation. As with SASs, we can put additional restrictions on SAMs. For example, we can consider SAMs with $h$ boundary components. 
For example, any $(k-1)$-edge neighboured by a $k$-face in a $k$-manifold in $\mathrm{SAM}_{(d,k)}(h=0)$ has two $k$-faces neighbouring it. $\rm{SAM}_{(d,k)}(1) \cong \rm{SAM}_{(d,k-1)}(0)$ since an element in the former can be uniquely identified with an element in the latter by applying the boundary map. However, since the $(k-1)$-surface areas of a $k$-volumes at fixed volume can vary, there is no isomorphism at fixed volume.

For self-osculating $k$-manifolds (SOMs), a $(k-1)$-edge can be neighboured by more than two $k$-faces as long as their specified connections osculate. SOMs supersets of SAMs. These are different from $(d, k)$-XDs, which are generalisation of XDs. In this case, more than two $k$-faces can neighbour the same $(k-1)$-edge, as long as configuration has a single connected component. Here, all $k$-faces neighbouring the same $(k-1)$-edge are deemed to be connected. SOMs can be generated from $(d, k)$-XDs in the same was as SOSs can be generated from XDs. Note that for $(d, k=d)$, $\mathrm{SAM}_{(d,d),n} = \mathrm{SOM}_{(d,d),n} = \mathrm{XD}_{(d,d),n}$. This is because the $(d-1)$-edge of two neighbouring $d$-faces only neighbour those two $d$-faces. Therefore there there can only be at most two $d$-faces neighbouring a $(d-1)$-edge.

Therefore we have a hierachy of restricted manifolds as
\begin{align}
    \rm{SAM}_{(d, k), n}(0) \subset \rm{SAM}_{(d, k), n} \subseteq \begin{cases}
        \rm{SOM}_{(d, k), n}, \\
        \rm{XD}_{(d, k), n},
    \end{cases}
\end{align}
for each $k$-area $n$. The equality holds for $(d,k=d)$. Therefore, assuming that growth constants exist, which we prove in \Cref{sec:existence-and-upper-bound}, we have
\begin{align}
    \mu^\rm{SAM}_{(d, k)}(0) \leq \mu^\rm{SAM}_{(d, k)} \leq \begin{cases}
        \mu^\rm{SOM}_{(d, k)}, \\
        \mu^\rm{XD}_{(d, k)}.
    \end{cases}
\end{align}
In the same section, we will also prove that the leftmost inequality is strictly less than ($<$) for $k>1$.

Similarly to the $(d=3, k=2)$ case, these models can be thought of as `hyperedge models', as the restriction conditions are defined on the level of $(k-1)$-edges. For $(d, k=d-1)$, we can define osculating domain wall hypersurfaces, a vertex model. We do this by similarly to osculating domain wall surfaces in $(d=3, k=2)$: boolean variables live on $d$-dimensional hypercubes and and their boundaries may osculate. They are defined in \Cref{sec:osculating}.

In this scheme, restricted walks can be related to restricted edges, i.e. $(d, k)=(d, 1)$. The important difference is that for self-avoiding edges, configurations are related by translation. Therefore walks and the reverse walks are identified. On the other hand, in the case of restricted walks, they are ordered and start from the origin. Therefore the two are related as $c_n^\rm{walks} = 2c_n^\rm{edges}$. Restricted surfaces are $(d, k) = (d, 2)$, and restricted volumes $(d, k)=(d, 3)$. Since hypersurfaces are defined to have dimension $k=d-1$, they are $(d, k=d-1)$.

Polysticks are connected edges on the square lattice, where similar to polyominoids, more than 2 edges can neighbour the same vertex; they are $(d, k)=(2,1)$. Polyominoes are connected squares on the square lattice, therefore $(d, k) = (2, 2)$. XDs (without a prefix) are $(d, k)=(3, 2)$ and are supersets of SASs in $d=3$, as multiple faces can be connected to one edge.  Polycubes are connected cubes on the cubic lattice, therefore $(d, k)=(3, 3)$. Lattice animals, sometimes referred to as ``$d$-dimensional polycubes'', are connected $d$-dimensional hypercubes in the $d$-dim hypercubic lattice; they are therefore $(d, k=d)$.

The various restricted manifold models, their acronyms, and their other names are listed in \Cref{table:acronyms}.

\subsection{Outline}
The outline of the paper is as follows. 

In \Cref{sec:definitions}, we define some restricted manifold models. First, we define self-osculating walks and osculating domain wall walks on the square and triangular lattices. We introduce the concept of center coordinates for hypercubic lattices, then define $(d,k)$-self-osculating manifolds and $d$-dim osculating domain wall hypersurfaces.

In \Cref{sec:automata}, we generalise the algorithm of \cite{ponitz2000improved}, known as the \say{automata method}. This is a generic method to obtain progressively sharper upper bounds on the connective constants of restricted walks (i.e., walks obeying a given rule). We apply this method to obtain upper bounds of connective constants for various restricted walk models such as the self-osculating walks on the square and triangular lattices.

In \Cref{sec:existence-and-upper-bound}, we prove the existence and bounds for the growth constants for restricted manifolds. The upper bounds for the number of restricted manifolds are constructed by a consistent way of labelling them. This, combined with a `concatenation' procedure, where two restricted manifolds are joined together, results in the proof of the existence of a growth constant. Lower bounds are found by considering `directed walk' configurations of manifolds.

In Section~\ref{sec:twig-method}, we discuss the twig method. This is a method originally used in \cite{eden1961two,klarner1973procedure} to systematically improve upper bounds of growth constants for polyomines, which we adapt for self-avoiding surfaces on the cubic lattice to obtain an improved upper bound for its growth constant.

Finally, in \Cref{sec:conclusion}, we conclude and give an outlook on remaining problems.

\section{Definitions of restricted manifold models} \label{sec:definitions}
\subsection{Self-osculating walks and osculating domain wall walks in $d=2$} \label{sec:osculating-vertex-definitions}
As stated in the introduction, restricted walk models can be thought of as vertex models. For clarity, we define their vertex configuration generators here.

For the square lattice, consider the bulk vertex configurations given by \eqref{eq:bulk-vertices-square}. Note that the first and the third configurations can be obtained by removing edges from the second and fourth configurations. Therefore SOWs on the square lattice can be compactly defined as the following.
\begin{definition}
    The SOW on the square lattice is defined by the following vertex configuration generators.
    \begin{equation} \label{eq:bulk-vertices-square-generators}
\begin{tikzpicture}[baseline={([yshift=-.5ex] current bounding box.center)}, scale=0.65]
    \draw [gray!50, ultra thick] (0, -1) to (0, 1);
    \draw [gray!50, ultra thick] (-1, 0) to (1, 0);
		\draw [ultra thick, blue!75] (1, 0) to (-1, 0);
        \draw[dashed] (1,1) -- (-1,1) -- (-1,-1) -- (1,-1) -- (1, 1);
\end{tikzpicture} \quad \quad
\begin{tikzpicture}[baseline={([yshift=-.5ex] current bounding box.center)}, scale=0.65]
    \draw [gray!50, ultra thick] (0, -1) to (0, 1);
    \draw [gray!50, ultra thick] (-1, 0) to (1, 0);
		\draw [bend left=45, looseness=1.75, ultra thick, blue!75] (0, 1) to (-1, 0);
		\draw [bend left=315, looseness=1.75, ultra thick, blue!75] (1, 0) to (0, -1);
        \draw[dashed] (1,1) -- (-1,1) -- (-1,-1) -- (1,-1) -- (1, 1);
\end{tikzpicture}
\end{equation}
\end{definition}

SOWs can also be considered on the triangular lattice, where they are defined in terms of a set of allowed vertex configurations. All vertex configurations can be generated from vertex configuration generators, which are fully occupied vertex configurations up to rotations and reflections. These correspond to all possible ways in which paths may traverse the central vertex such that no additional path can be introduced at that vertex. The set of all allowed vertices is then given by the collection of all subsets of these fully occupied vertex configurations.

\begin{definition}
    The SOW on the triangular lattice are defined by the following vertex configuration generators.
    \begin{align}
    \begin{tikzpicture}[baseline={([yshift=-.5ex] current bounding box.center)}, scale=0.75]

    \coordinate (O) at (0, 0);
    \coordinate (A) at (1, 0);
    \coordinate (B) at (0.5, 0.866);   
    \coordinate (C) at (-0.5, 0.866);
    \coordinate (D) at (-1, 0);
    \coordinate (E) at (-0.5, -0.866);
    \coordinate (F) at (0.5, -0.866);
 
    \draw[dashed] (A) -- (B);
    \draw[dashed] (B) -- (C);
    \draw[dashed] (C) -- (D);
    \draw[dashed] (D) -- (E);
    \draw[dashed] (E) -- (F);
    \draw[dashed] (F) -- (A);

    \draw[gray!50, ultra thick] (A) -- (D);
    \draw[gray!50, ultra thick] (B) -- (E);
    \draw[gray!50, ultra thick] (C) -- (F);
    
    \draw[bend right=60, looseness=3, ultra thick, blue!75] (B) to (A);
    \draw[bend left=60, looseness=3, ultra thick, blue!75] (C) to (D);
    \draw[bend left=60, looseness=3, ultra thick, blue!75] (E) to (F);

    \begin{scope}[shift={(3,0)}]
    \coordinate (O) at (0, 0);
    \coordinate (A) at (1, 0);
    \coordinate (B) at (0.5, 0.866);   
    \coordinate (C) at (-0.5, 0.866);
    \coordinate (D) at (-1, 0);
    \coordinate (E) at (-0.5, -0.866);
    \coordinate (F) at (0.5, -0.866);

    \draw[dashed] (A) -- (B);
    \draw[dashed] (B) -- (C);
    \draw[dashed] (C) -- (D);
    \draw[dashed] (D) -- (E);
    \draw[dashed] (E) -- (F);
    \draw[dashed] (F) -- (A);

    \draw[gray!50, ultra thick] (A) -- (D);
    \draw[gray!50, ultra thick] (B) -- (E);
    \draw[gray!50, ultra thick] (C) -- (F);
    
    \draw[bend right=30, looseness=1.7, ultra thick, blue!75] (B) to (F);
    \draw[bend left=30, looseness=1.7, ultra thick, blue!75] (C) to (E);
    \end{scope}

    \begin{scope}[shift={(6,0)}]
    \coordinate (O) at (0, 0);
    \coordinate (A) at (1, 0);
    \coordinate (B) at (0.5, 0.866);   
    \coordinate (C) at (-0.5, 0.866);
    \coordinate (D) at (-1, 0);
    \coordinate (E) at (-0.5, -0.866);
    \coordinate (F) at (0.5, -0.866);

    \draw[dashed] (A) -- (B);
    \draw[dashed] (B) -- (C);
    \draw[dashed] (C) -- (D);
    \draw[dashed] (D) -- (E);
    \draw[dashed] (E) -- (F);
    \draw[dashed] (F) -- (A);

    \draw[gray!50, ultra thick] (A) -- (D);
    \draw[gray!50, ultra thick] (B) -- (E);
    \draw[gray!50, ultra thick] (C) -- (F);
    
    \draw[bend right=30, looseness=1.7, ultra thick, blue!75] (B) to (F);
    \draw[bend left=60, looseness=3, ultra thick, blue!75] (C) to (D);
    \end{scope}

    \begin{scope}[shift={(9
    ,0)}]
    \coordinate (O) at (0, 0);
    \coordinate (A) at (1, 0);
    \coordinate (B) at (0.5, 0.866);   
    \coordinate (C) at (-0.5, 0.866);
    \coordinate (D) at (-1, 0);
    \coordinate (E) at (-0.5, -0.866);
    \coordinate (F) at (0.5, -0.866);
 
    \draw[dashed] (A) -- (B);
    \draw[dashed] (B) -- (C);
    \draw[dashed] (C) -- (D);
    \draw[dashed] (D) -- (E);
    \draw[dashed] (E) -- (F);
    \draw[dashed] (F) -- (A);

    \draw[gray!50, ultra thick] (A) -- (D);
    \draw[gray!50, ultra thick] (B) -- (E);
    \draw[gray!50, ultra thick] (C) -- (F);
    
    \draw[ultra thick, blue!75] (A) to (D);
    \draw[bend right=60, looseness=3, ultra thick, blue!75] (C) to (B);
    \draw[bend right=60, looseness=3, ultra thick, blue!75] (F) to (E);
    \end{scope}
\end{tikzpicture}
    \end{align}
\end{definition}

As mentioned in the introductions, ODWs on the triangular lattice are a subset of SOWs. Considering the various boolean variable configurations, one obtains the following osculating configurations, which will be sufficient to generate the other configurations once the boolean variables are removed and paths truncated.
\begin{align}
    \begin{tikzpicture}[baseline={([yshift=-.5ex] current bounding box.center)}, scale=0.75]

    \coordinate (O) at (0, 0);
    \coordinate (A) at (1, 0);
    \coordinate (B) at (0.5, 0.866);   
    \coordinate (C) at (-0.5, 0.866);
    \coordinate (D) at (-1, 0);
    \coordinate (E) at (-0.5, -0.866);
    \coordinate (F) at (0.5, -0.866);

    \draw[dashed] (A) -- (B);
    \draw[dashed] (B) -- (C);
    \draw[dashed] (C) -- (D);
    \draw[dashed] (D) -- (E);
    \draw[dashed] (E) -- (F);
    \draw[dashed] (F) -- (A);

    \draw[gray!50, ultra thick] (A) -- (D);
    \draw[gray!50, ultra thick] (B) -- (E);
    \draw[gray!50, ultra thick] (C) -- (F);

    \draw[bend right=60, looseness=3, ultra thick, blue!75] (B) to (A);
    \draw[bend left=60, looseness=3, ultra thick, blue!75] (C) to (D);
    \draw[bend left=60, looseness=3, ultra thick, blue!75] (E) to (F);

    \coordinate (G1) at (0, {2/3 * sqrt(3)/2});
    \draw[thick] (G1) circle (3pt);
    \coordinate (G2) at (0, {-2/3 * sqrt(3)/2});
    \filldraw[black] (G2) circle (3pt);
    \coordinate (G3) at (-1/2, {-1/3 * sqrt(3)/2});
    \draw[thick] (G3) circle (3pt);
    \coordinate (G4) at (1/2, {-1/3 * sqrt(3)/2});
    \draw[thick] (G4) circle (3pt);
    \coordinate (G5) at (1/2, {1/3 * sqrt(3)/2});
    \filldraw[black] (G5) circle (3pt);
    \coordinate (G6) at (-1/2, {1/3 * sqrt(3)/2});
    \filldraw[black] (G6) circle (3pt);
    \begin{scope}[shift={(3,0)}]
    \coordinate (O) at (0, 0);
    \coordinate (A) at (1, 0);
    \coordinate (B) at (0.5, 0.866);   
    \coordinate (C) at (-0.5, 0.866);
    \coordinate (D) at (-1, 0);
    \coordinate (E) at (-0.5, -0.866);
    \coordinate (F) at (0.5, -0.866);

    \draw[dashed] (A) -- (B);
    \draw[dashed] (B) -- (C);
    \draw[dashed] (C) -- (D);
    \draw[dashed] (D) -- (E);
    \draw[dashed] (E) -- (F);
    \draw[dashed] (F) -- (A);

    \draw[gray!50, ultra thick] (A) -- (D);
    \draw[gray!50, ultra thick] (B) -- (E);
    \draw[gray!50, ultra thick] (C) -- (F);

    \draw[bend right=30, looseness=1.7, ultra thick, blue!75] (B) to (F);
    \draw[bend left=30, looseness=1.7, ultra thick, blue!75] (C) to (E);

    \coordinate (G1) at (0, {2/3 * sqrt(3)/2});
    \draw[thick] (G1) circle (3pt);
    \coordinate (G2) at (0, {-2/3 * sqrt(3)/2});
    \draw[thick] (G2) circle (3pt);
    \coordinate (G3) at (-1/2, {-1/3 * sqrt(3)/2});
    \filldraw[black] (G3) circle (3pt);
    \coordinate (G4) at (1/2, {-1/3 * sqrt(3)/2});
    \filldraw[black] (G4) circle (3pt);
    \coordinate (G5) at (1/2, {1/3 * sqrt(3)/2});
    \filldraw[black] (G5) circle (3pt);
    \coordinate (G6) at (-1/2, {1/3 * sqrt(3)/2});
    \filldraw[black] (G6) circle (3pt);
    
    \end{scope}

    \begin{scope}[shift={(6,0)}]
    \coordinate (O) at (0, 0);
    \coordinate (A) at (1, 0);
    \coordinate (B) at (0.5, 0.866);   
    \coordinate (C) at (-0.5, 0.866);
    \coordinate (D) at (-1, 0);
    \coordinate (E) at (-0.5, -0.866);
    \coordinate (F) at (0.5, -0.866);

    \draw[dashed] (A) -- (B);
    \draw[dashed] (B) -- (C);
    \draw[dashed] (C) -- (D);
    \draw[dashed] (D) -- (E);
    \draw[dashed] (E) -- (F);
    \draw[dashed] (F) -- (A);

    \draw[gray!50, ultra thick] (A) -- (D);
    \draw[gray!50, ultra thick] (B) -- (E);
    \draw[gray!50, ultra thick] (C) -- (F);

    \draw[bend right=30, looseness=1.7, ultra thick, blue!75] (B) to (F);
    \draw[bend left=60, looseness=3, ultra thick, blue!75] (C) to (D);

    \coordinate (G1) at (0, {2/3 * sqrt(3)/2});
    \draw[thick] (G1) circle (3pt);
    \coordinate (G2) at (0, {-2/3 * sqrt(3)/2});
    \draw[thick] (G2) circle (3pt);
    \coordinate (G3) at (-1/2, {-1/3 * sqrt(3)/2});
    \draw[thick] (G3) circle (3pt);
    \coordinate (G4) at (1/2, {-1/3 * sqrt(3)/2});
    \filldraw[black] (G4) circle (3pt);
    \coordinate (G5) at (1/2, {1/3 * sqrt(3)/2});
    \filldraw[black] (G5) circle (3pt);
    \coordinate (G6) at (-1/2, {1/3 * sqrt(3)/2});
    \filldraw[black] (G6) circle (3pt);
    \end{scope}

    \begin{scope}[shift={(9
    ,0)}]
    \coordinate (O) at (0, 0);
    \coordinate (A) at (1, 0);
    \coordinate (B) at (0.5, 0.866);   
    \coordinate (C) at (-0.5, 0.866);
    \coordinate (D) at (-1, 0);
    \coordinate (E) at (-0.5, -0.866);
    \coordinate (F) at (0.5, -0.866);

    \draw[dashed] (A) -- (B);
    \draw[dashed] (B) -- (C);
    \draw[dashed] (C) -- (D);
    \draw[dashed] (D) -- (E);
    \draw[dashed] (E) -- (F);
    \draw[dashed] (F) -- (A);

    \draw[gray!50, ultra thick] (A) -- (D);
    \draw[gray!50, ultra thick] (B) -- (E);
    \draw[gray!50, ultra thick] (C) -- (F);

    \draw[bend right=60, looseness=3, ultra thick, blue!75] (C) to (B);
    \draw[bend right=60, looseness=3, ultra thick, blue!75] (F) to (E);

    \coordinate (G1) at (0, {2/3 * sqrt(3)/2});
    \filldraw[black] (G1) circle (3pt);
    \coordinate (G2) at (0, {-2/3 * sqrt(3)/2});
    \filldraw[black] (G2) circle (3pt);
    \coordinate (G3) at (-1/2, {-1/3 * sqrt(3)/2});
    \draw[thick] (G3) circle (3pt);
    \coordinate (G4) at (1/2, {-1/3 * sqrt(3)/2});
    \draw[thick] (G4) circle (3pt);
    \coordinate (G5) at (1/2, {1/3 * sqrt(3)/2});
    \draw[thick] (G5) circle (3pt);
    \coordinate (G6) at (-1/2, {1/3 * sqrt(3)/2});
    \draw[thick] (G6) circle (3pt);
    \end{scope}

    \begin{scope}[shift={(12
    ,0)}]
    \coordinate (O) at (0, 0);
    \coordinate (A) at (1, 0);
    \coordinate (B) at (0.5, 0.866);   
    \coordinate (C) at (-0.5, 0.866);
    \coordinate (D) at (-1, 0);
    \coordinate (E) at (-0.5, -0.866);
    \coordinate (F) at (0.5, -0.866);

    \draw[dashed] (A) -- (B);
    \draw[dashed] (B) -- (C);
    \draw[dashed] (C) -- (D);
    \draw[dashed] (D) -- (E);
    \draw[dashed] (E) -- (F);
    \draw[dashed] (F) -- (A);

    \draw[gray!50, ultra thick] (A) -- (D);
    \draw[gray!50, ultra thick] (B) -- (E);
    \draw[gray!50, ultra thick] (C) -- (F);

    \draw[bend right=60, looseness=3, ultra thick, blue!75] (C) to (B);
    \draw[ultra thick, blue!75] (A) to (D);
    
    \coordinate (G1) at (0, {2/3 * sqrt(3)/2});
    \filldraw[black] (G1) circle (3pt);
    \coordinate (G2) at (0, {-2/3 * sqrt(3)/2});
    \filldraw[black] (G2) circle (3pt);
    \coordinate (G3) at (-1/2, {-1/3 * sqrt(3)/2});
    \filldraw[black] (G3) circle (3pt);
    \coordinate (G4) at (1/2, {-1/3 * sqrt(3)/2});
    \filldraw[black] (G4) circle (3pt);
    
    \coordinate (G5) at (1/2, {1/3 * sqrt(3)/2});
    \draw[thick] (G5) circle (3pt);
    \coordinate (G6) at (-1/2, {1/3 * sqrt(3)/2});
    \draw[thick] (G6) circle (3pt);
    \end{scope}
\end{tikzpicture}
\end{align}

Removing the boolean variables, ODWs on the triangular lattice can be defined as the following.
\begin{definition}
    ODWs on the triangular lattice are defined by the following vertex configuration generators.
    \begin{align}
    \begin{tikzpicture}[baseline={([yshift=-.5ex] current bounding box.center)}, scale=0.75]

    \coordinate (O) at (0, 0);
    \coordinate (A) at (1, 0);
    \coordinate (B) at (0.5, 0.866);   
    \coordinate (C) at (-0.5, 0.866);
    \coordinate (D) at (-1, 0);
    \coordinate (E) at (-0.5, -0.866);
    \coordinate (F) at (0.5, -0.866);

    \draw[dashed] (A) -- (B);
    \draw[dashed] (B) -- (C);
    \draw[dashed] (C) -- (D);
    \draw[dashed] (D) -- (E);
    \draw[dashed] (E) -- (F);
    \draw[dashed] (F) -- (A);

    \draw[gray!50, ultra thick] (A) -- (D);
    \draw[gray!50, ultra thick] (B) -- (E);
    \draw[gray!50, ultra thick] (C) -- (F);
    
    \draw[bend right=60, looseness=3, ultra thick, blue!75] (B) to (A);
    \draw[bend left=60, looseness=3, ultra thick, blue!75] (C) to (D);
    \draw[bend left=60, looseness=3, ultra thick, blue!75] (E) to (F);

    \coordinate (G1) at (0, {2/3 * sqrt(3)/2});
    \coordinate (G2) at (0, {-2/3 * sqrt(3)/2});
    \coordinate (G3) at (-1/2, {-1/3 * sqrt(3)/2});
    \coordinate (G4) at (1/2, {-1/3 * sqrt(3)/2});
    \coordinate (G5) at (1/2, {1/3 * sqrt(3)/2});
    \coordinate (G6) at (-1/2, {1/3 * sqrt(3)/2});
    \begin{scope}[shift={(3,0)}]
    \coordinate (O) at (0, 0);
    \coordinate (A) at (1, 0);
    \coordinate (B) at (0.5, 0.866);   
    \coordinate (C) at (-0.5, 0.866);
    \coordinate (D) at (-1, 0);
    \coordinate (E) at (-0.5, -0.866);
    \coordinate (F) at (0.5, -0.866);

    \draw[dashed] (A) -- (B);
    \draw[dashed] (B) -- (C);
    \draw[dashed] (C) -- (D);
    \draw[dashed] (D) -- (E);
    \draw[dashed] (E) -- (F);
    \draw[dashed] (F) -- (A);

    \draw[gray!50, ultra thick] (A) -- (D);
    \draw[gray!50, ultra thick] (B) -- (E);
    \draw[gray!50, ultra thick] (C) -- (F);

    \draw[bend right=30, looseness=1.7, ultra thick, blue!75] (B) to (F);
    \draw[bend left=30, looseness=1.7, ultra thick, blue!75] (C) to (E);

    \coordinate (G1) at (0, {2/3 * sqrt(3)/2});
    \coordinate (G2) at (0, {-2/3 * sqrt(3)/2});
    \coordinate (G3) at (-1/2, {-1/3 * sqrt(3)/2});
    \coordinate (G4) at (1/2, {-1/3 * sqrt(3)/2});
    \coordinate (G5) at (1/2, {1/3 * sqrt(3)/2});
    \coordinate (G6) at (-1/2, {1/3 * sqrt(3)/2});
    
    \end{scope}

    \begin{scope}[shift={(6,0)}]
    \coordinate (O) at (0, 0);
    \coordinate (A) at (1, 0);
    \coordinate (B) at (0.5, 0.866);   
    \coordinate (C) at (-0.5, 0.866);
    \coordinate (D) at (-1, 0);
    \coordinate (E) at (-0.5, -0.866);
    \coordinate (F) at (0.5, -0.866);

    \draw[dashed] (A) -- (B);
    \draw[dashed] (B) -- (C);
    \draw[dashed] (C) -- (D);
    \draw[dashed] (D) -- (E);
    \draw[dashed] (E) -- (F);
    \draw[dashed] (F) -- (A);

    \draw[gray!50, ultra thick] (A) -- (D);
    \draw[gray!50, ultra thick] (B) -- (E);
    \draw[gray!50, ultra thick] (C) -- (F);

    \draw[bend right=30, looseness=1.7, ultra thick, blue!75] (B) to (F);
    \draw[bend left=60, looseness=3, ultra thick, blue!75] (C) to (D);

    \coordinate (G1) at (0, {2/3 * sqrt(3)/2});
    \coordinate (G2) at (0, {-2/3 * sqrt(3)/2});
    \coordinate (G3) at (-1/2, {-1/3 * sqrt(3)/2});
    \coordinate (G4) at (1/2, {-1/3 * sqrt(3)/2});
    \coordinate (G5) at (1/2, {1/3 * sqrt(3)/2});
    \coordinate (G6) at (-1/2, {1/3 * sqrt(3)/2});
    \end{scope}

    \begin{scope}[shift={(9
    ,0)}]
    \coordinate (O) at (0, 0);
    \coordinate (A) at (1, 0);
    \coordinate (B) at (0.5, 0.866);   
    \coordinate (C) at (-0.5, 0.866);
    \coordinate (D) at (-1, 0);
    \coordinate (E) at (-0.5, -0.866);
    \coordinate (F) at (0.5, -0.866);

    \draw[dashed] (A) -- (B);
    \draw[dashed] (B) -- (C);
    \draw[dashed] (C) -- (D);
    \draw[dashed] (D) -- (E);
    \draw[dashed] (E) -- (F);
    \draw[dashed] (F) -- (A);

    \draw[gray!50, ultra thick] (A) -- (D);
    \draw[gray!50, ultra thick] (B) -- (E);
    \draw[gray!50, ultra thick] (C) -- (F);

    \draw[bend right=60, looseness=3, ultra thick, blue!75] (C) to (B);
    \draw[bend right=60, looseness=3, ultra thick, blue!75] (F) to (E);

    \coordinate (G1) at (0, {2/3 * sqrt(3)/2});
    \coordinate (G2) at (0, {-2/3 * sqrt(3)/2});
    \coordinate (G3) at (-1/2, {-1/3 * sqrt(3)/2});
    \coordinate (G4) at (1/2, {-1/3 * sqrt(3)/2});
    \coordinate (G5) at (1/2, {1/3 * sqrt(3)/2});
    \coordinate (G6) at (-1/2, {1/3 * sqrt(3)/2});
    \end{scope}

    \begin{scope}[shift={(12,0)}]
    \coordinate (O) at (0, 0);
    \coordinate (A) at (1, 0);
    \coordinate (B) at (0.5, 0.866);   
    \coordinate (C) at (-0.5, 0.866);
    \coordinate (D) at (-1, 0);
    \coordinate (E) at (-0.5, -0.866);
    \coordinate (F) at (0.5, -0.866);
 
    \draw[dashed] (A) -- (B);
    \draw[dashed] (B) -- (C);
    \draw[dashed] (C) -- (D);
    \draw[dashed] (D) -- (E);
    \draw[dashed] (E) -- (F);
    \draw[dashed] (F) -- (A);

    \draw[gray!50, ultra thick] (A) -- (D);
    \draw[gray!50, ultra thick] (B) -- (E);
    \draw[gray!50, ultra thick] (C) -- (F);

    \draw[bend right=60, looseness=3, ultra thick, blue!75] (C) to (B);
    \draw[ultra thick, blue!75] (A) to (D);
    
    \coordinate (G1) at (0, {2/3 * sqrt(3)/2});
    \coordinate (G2) at (0, {-2/3 * sqrt(3)/2});
    \coordinate (G3) at (-1/2, {-1/3 * sqrt(3)/2});
    \coordinate (G4) at (1/2, {-1/3 * sqrt(3)/2});
    \coordinate (G5) at (1/2, {1/3 * sqrt(3)/2});
    \coordinate (G6) at (-1/2, {1/3 * sqrt(3)/2});
    \end{scope}
\end{tikzpicture}
\end{align}
\end{definition}

\subsection{Center coordinates in the hypercubic lattice} \label{sec:center-coordinates}
In the next subsection, we will define SOMs and ODWs generally for the hypercubic lattice. Before we do so, it will be useful to introduce notion of center coordinates. This will also be useful for proofs regarding the existence and bounds for their growth constants in \Cref{sec:existence-and-upper-bound}.

We will consider the vertices of a $d$-hypercubic lattice to at integer coordinates. So, any vertex $v$ can be described by its coordinates $\bm{v} = (v_i)_{i=1}^d$, where $v_i \in \mathbb{Z} \; \forall i$. Higher-dimensional objects such as edges, faces, and cubes can also be located via their center coordinates. An edge $l$ links two vertices with one dimension differing by 1. So, it can be described by the midpoint between the two vertices and therefore its center coordinates $\bm{l} = (l_i)_{i=1}^d$ has one half-integer and $(d-1)$ integers. A face $f$ with center coordinates $\bm{f} = (f_i)_{i=1}^d$ has two half-integers and $(d-2)$ integers. Its two half-integers give the orientation of the face.

Similarly, a $k$-cube $b$ (which will sometimes be referred to as a $k$-edge or $k$-face depending on what role it is playing), has center coordinates $\bm{b}$ with $k$ half-integers and $(d-k)$ integers. We will use the notation $\mathrm{Int}(\bm{b})$ as the integer dimensions of $b$ and $\mathrm{HalfInt}(\bm{b})$ as the half-integer dimensions of $b$. The half-integer dimensions will sometimes be referred to as orientations when convenient.

Consider a face $f$ with orientations or half-integer dimensions $\{j, j'\} = \mathrm{HalfInt}(\bm{f})$. Its boundaries are edges centered at $\bm{f} \pm \frac12 \bm{j}$ and $\bm{f} \pm \frac12 \bm{j}'$. Here, $\bm{j}$ is the unit vector in the $j$\textsuperscript{th} dimension. We will refer these edges as neighbouring edges of $f$. $f$ may neighbour other faces through its neighbouring edges. For example, on the edge $\bm{f} + \frac12 \bm{j}$, $f$ may neighbour another face at $\bm{f} + \bm{j}$, or at $\bm{f} + \frac12 \bm{j} \pm \frac12 \bm{i}$ $\forall i \in \mathrm{Int}(\bm{f})$. A face can also be part of the boundary of a cube $b$, centered at $\bm{b} = \bm{f} \pm \frac12 \bm{i}$ $\forall i \in \mathrm{Int}(\bm{f})$. We will refer to such cubes as neighbouring cubes of $f$.

Generalising, a $k$-face $f$ 
\begin{itemize}
    \item neighbours $(k-1)$-edges at $\bm{f} \pm \frac12 \bm{j} \forall j \in \mathrm{HalfInt}(\bm{f})$. This can be thought of as ``selling'' one of its half-integers.
    \item Via these edges, it neighbours other $k$-faces with same orientations/half-integer dimensions at $\bm{f} \pm \bm{j}$ for any $j \in \mathrm{HalfInt}(\bm{f})$, or at orientations that differ by one, at $\bm{f} \pm \frac12 \bm{j} \pm \frac12 \bm{i}$ for any $j \in \mathrm{HalfInt}(\bm{f}), \forall i \in \mathrm{Int}(\bm{f})$. This can be thought of as ``trading'' one of its half-integer dimensions with a new one.
    \item Finally, it neighbours $(k+1)$-cubes at $\bm{f} \pm \frac12 \bm{i}$ for any $i \in \mathrm{HalfInt}(\bm{f})$. This can be thought of as ``buying'' a new half-integer dimension.
\end{itemize}

\subsection{Definitions of $(d,k)$-self-osculating manifolds and osculating domain wall hypersurfaces on the hypercubic lattice} \label{sec:osculating}

For general $(d, k)$, we cannot rely on explicit drawings of vertex or edge configurations. So, in this subsection, we define $(d,k)$-SOMs and ODW on the $d$-dim hypercubic lattice without relying on them.

\begin{definition}
    The set of $(d,k)$-SOMs with hyperarea ($k$-area) $n$, $\mathrm{SOM}_{(d,k), n}$ is a set of SOMs identified up to translation. A SOM of hyperarea $n$ is a collection of hyperfaces ($k$-faces) and connections, such that
    \begin{itemize}
        \item There is one connected component.
        \item When two hyperfaces neighbour the same hyperedge (a $(k-1)$-edge), they are deemed to be connected. 
        \item When more than two hyperfaces neighbour a hyperedge, their connections are specified and must obey the osculating condition. Connections are defined such that
        \begin{itemize}
            \item if there is an even number of hyperfaces neighbouring the hyperedge, they are pairings of such hyperfaces.
            \item If there is an odd number of hyperfaces neighbouring the hyperedge, it is one lone such hyperface and pairings between the rest of such hyperfaces.
        \end{itemize}
    \end{itemize}
\end{definition}

Now we define the criterion to determine whether a configuration of more than two $k$-faces is allowed in SOMs on the hypercubic lattice, which we will call the osculating condition.

\begin{definition}[Osculating condition]
    Consider a hyperedge (a $(k-1)$-edge) centered at $\bm{g}$ and $M > 2$ hyperfaces ($k$-faces) neighbouring it with some connection structure.

    If $M$ is even such that there are $m$ pairs of hyperfaces such that the two hyperfaces in a pair are connected, we compare all possible two pairs in the following.
    
    Consider two pairs of hyperfaces, with center coordinates $\bm{f}_{1}$, $\bm{f}_{2}$, $\bm{f}_{3}$, $\bm{f}_{4}$, neighbouring the same $(k-1)$-edge, such that $f_{1}$ and $f_{2}$ are connected, and $f_{3}$ and $f_{4}$ are connected. Let $e_{i_{a}} = (f_{a}- g)/\lVert f_a - g \rVert$ be the normalised vectors to the centers from the center of the $(k-1)$-edge. Here, $\lVert \cdot \rVert$ is the Euclidean norm. Then, the following applies. First, we require that all $\{e_{i_{a}}\}_{a=1}^4$ are different. Second, if the unit vectors are parallel, $e_{i_1} = - e_{i_2}$ (i.e. $f_1$ and $f_2$ have the same orientation), then we only allow the connections if $e_{i_3} \perp e_{i_4}$.

    The osculating condition is fulfilled the above holds for all two pairs of hyperfaces.
    
    If $M$ is odd, the osculating condition is fulfilled if any hyperface neighbouring the hyperedge can be connected to the lone neighbouring hyperface such that the osculating condition is fulfilled.
\end{definition}
The osculating condition is illustrated for $(d, k)=(3,1)$ in \Cref{fig:osculating-hyperedge}. 

\begin{figure}[H]
    \centering
    \begin{align*}
    \begin{tikzpicture}[baseline={([yshift=-.5ex] current bounding box.center)}, scale=0.5]
    \draw [gray!50, ultra thick, ->] (0, -1, 0) -> (0, 1, 0) node[above, scale = 0.75, black] {$\bm{1}$};
    \draw [gray!50, ultra thick, ->] (-1, 0, 0) -> (1, 0, 0) node[right, scale = 0.75, black] {$\bm{2}$};
    \draw [gray!50, ultra thick, ->] (0, 0, 1) -> (0, 0, -1) node[above right, scale = 0.75, black] {$\bm{3}$};
    \end{tikzpicture}
    \quad \quad
    \begin{tikzpicture}[baseline={([yshift=-.5ex] current bounding box.center)}, scale=0.5]
    \draw [gray!50, ultra thick] (0, -1, 0) to (0, 1, 0);
    \draw [gray!50, ultra thick] (-1, 0, 0) to (1, 0, 0);
    \draw [gray!50, ultra thick] (0, 0, -1) to (0, 0, 1);
    \draw [bend left=45, looseness=1.75, ultra thick, blue!75] (0, 1, 0) to (-1, 0, 0);
    \draw [ultra thick, blue!75] (0, 0, 1) to (0, 0, -1);
    \draw[dashed] (1, 1, 1) -- (-1, 1, 1) -- (-1, -1, 1) -- (1, -1, 1) -- (1, 1, 1);
    \draw[dashed] (1, 1, 1) -- (1, 1, -1);
    \draw[dashed] (-1, 1, 1) -- (-1, 1, -1);
    \draw[dashed] (-1, -1, 1) -- (-1, -1, -1);
    \draw[dashed] (1, -1, 1) -- (1, -1, -1);
    \draw[dashed] (1, 1, -1) -- (-1, 1, -1) -- (-1, -1, -1) -- (1, -1, -1) -- (1, 1, -1);
    \end{tikzpicture}
    \quad \quad
    \begin{tikzpicture}[baseline={([yshift=-.5ex] current bounding box.center)}, scale=0.5]
    \draw [gray!50, ultra thick] (0, -1, 0) to (0, 1, 0);
    \draw [gray!50, ultra thick] (-1, 0, 0) to (1, 0, 0);
    \draw [gray!50, ultra thick] (0, 0, -1) to (0, 0, 1);
    \draw [ultra thick, blue!75] (0, 0, 1) to (0, 0, -1);
    \draw [ultra thick, red!75] (1, 0, 0) to (-1, 0, 0);
    \draw[dashed] (1, 1, 1) -- (-1, 1, 1) -- (-1, -1, 1) -- (1, -1, 1) -- (1, 1, 1);
    \draw[dashed] (1, 1, 1) -- (1, 1, -1);
    \draw[dashed] (-1, 1, 1) -- (-1, 1, -1);
    \draw[dashed] (-1, -1, 1) -- (-1, -1, -1);
    \draw[dashed] (1, -1, 1) -- (1, -1, -1);
    \draw[dashed] (1, 1, -1) -- (-1, 1, -1) -- (-1, -1, -1) -- (1, -1, -1) -- (1, 1, -1);
    \end{tikzpicture} 
\end{align*}
\caption{Illustration of osculating condition for $(d, k)=(3, 1)$. Here, the hyperfaces correspond to edges. In both cases, there exists a connected pair $(f_1, f_2)$ with $e_{i_1} = \bm{3}$, $e_{i_2} = -\bm{3}$, and therefore they are parallel. On the left, the other connected pair $(f_3, f_4)$ do not share any unit vectors with $(f_1, f_2)$ and their corresponding unit vectors are perpendicular, $e_{i_3} = \bm{2} \perp e_{i_4} = \bm{1}$. Therefore, the configuration $(f_1, f_2)$ and $(f_3, f_4)$ are allowed. Geometrically, this constitutes an allowed osculating configuration. On the right, although the other connected pair $(f_3', f_4')$ does not share any of the unit vectors, they are not perpendicular $e_{i_3'} = \bm{2} \perp e_{i_4'} = - \bm{2}$ and so the configurations $(f_1, f_2)$ and $(f_3', f_4')$ are not allowed. Geometrically, this constitutes a crossing.}
\label{fig:osculating-hyperedge}
\end{figure}
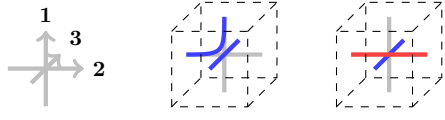

Next, we consider generalising ODWs for general to $(d,k=d-1)$.

First, notice that a $k=(d-1)$-face $f$ only neighbours $2(d-k)=2$ $d$-cubes at $\bm{b}_\pm = \bm{f} \pm \frac12 \bm{i}$, where $i$ is the only element of $\mathrm{Int}(\bm{f})$. This is because it already has $(d-1)$ orientations, i.e. $\abs{\mathrm{HalfInt}(\bm{f})} = d-1$. Therefore, $f$ can be seen as a boundary between $\bm{b}_+$ and $\bm{b}_-$ and the concept of domain wall can be applied. This is different to a general $k$-face with $k<d-1$, which neighbours more than 2 $(k+1)$-cubes.

Next, a $(d-2)$-edge $l$ neighbours four $(d-1)$-faces at $\bm{f}_{\pm, \bullet} = \bm{l} \pm \frac12 \bm{i}_1$ or $\bm{f}_{\bullet, \pm} = \bm{l} \pm \frac12 \bm{i}_2$, where $\{i_1, i_2\} = \mathrm{Int}(\bm{l})$. These $(d-1)$-faces neighbour four $d$-cubes at $\bm{b}_{\pm, \pm'} = \bm{l} \pm \frac12 \bm{i}_1 \pm' \frac12 \bm{i}_2$. Therefore, at each $(d-2)$-edge, the situation reduces to that of SOW in 2D. We are now ready define ODWs.

\begin{definition}
    The set of $d$-dim ODWs with $(d-1)$-area $n$, $\mathrm{ODW}_{(d), n}$, is a set of ODWs identified up to translation. A ODW of hyperarea $n$ is a collection of hyperfaces ($k$-faces) and connections, such that it is a SOW and that every vertex is included in the set of osculating domain wall vertex configurations.
\end{definition}

\begin{definition}[Osculating domain wall vertex configurations]
    Osculating domain wall vertex configurations are generated as following.

    By convention, set the location of the vertex to be at the origin. We will refer to $d$-cubes as hypercubes, $(d-1)$-faces as hyperfaces, and $(d-2)$-edges as hyperedges. First, consider all possible configurations of $2^d$ boolean variables $\sigma(\bm{b})$ at hypercubes $b$ which have the origin as one of their corners. These hypercubes are at $\bm{b}_{\{B_i\}_{i=1}^d}\sum_{i=1}^d \frac{B_i}{2} \bm{i}$, $\forall B_i \in \{-1, +1\} \forall i\in\{1,2,\dots,d\}$. Second, for all hyperfaces that touch the origin~\footnote{They are located at $\bm{f}_{\{F_{i_m}\}_{m=1}^{d-1}} = \sum_{m=1}^{d-1} \frac{F_{i_m}}{2} \bm{i}_m$ for all $F_{i_m} \in \{-1, +1\}$ for all $i_m \in C$ for all $C$ in $(d-1)$-combinations of $\{1,2,\dots,d\}$.}, let a hyperface be present if the boolean variables at neighbouring hypercubes differ. Third, consider all hyperedges touching the origin. For each hyperedge $l$ with $\mathrm{Int}(\bm{l}) = \{i_1, i_2\}$, consider the cases where all four of its neighbouring hyperfaces are present, i.e.
    \begin{align}
        \begin{pmatrix}
            \sigma(\bm{b}_{-+}) & \sigma(\bm{b}_{++}) \\
            \sigma(\bm{b}_{--}) & \sigma(\bm{b}_{+-})
        \end{pmatrix}
        =
        \begin{pmatrix}
            0 & 1 \\ 1 & 0
        \end{pmatrix}
        \quad \text{or} \quad
        \begin{pmatrix}
            1 & 0 \\ 0 & 1
        \end{pmatrix}.
    \end{align}
    In the former case, connect the hyperfaces as $(\bm{f}_{\bullet, +}, \bm{f}_{+, \bullet})$ and $(\bm{f}_{-, \bullet}, \bm{f}_{\bullet, -})$. In the latter case, connect the hyperfaces as $(\bm{f}_{\bullet, +}, \bm{f}_{-, \bullet})$ and $(\bm{f}_{\bullet,-}, \bm{f}_{+,\bullet})$. This is illustrated in \Cref{fig:osculating-domain-wall} for $d=3$.

    The set of all possible hyperfaces and their connections disregarding the boolean variable configurations, define the bulk osculating domain wall vertex configurations.

    The boundary osculating domain wall vertex configurations are defined by removing faces from the bulk osculating domain wall vertex configurations. Together, they define the osculating domain wall vertex configurations.
\end{definition}

\begin{corollary}
    ODWs in $d$-dim hypercubic lattice with $(d-1)$ area $n$ are a subset of $\mathrm{SOM}_{(d,d-1),n}$,
    \begin{align}
        \mathrm{ODW}_{\mathbb{Z}^d, n} \subset \mathrm{SOM}_{(d,d-1), n}.
    \end{align}
\end{corollary}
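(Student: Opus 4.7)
The plan is to verify that every configuration in $\mathrm{ODW}_{\mathbb{Z}^d,n}$ meets the three defining conditions of $\mathrm{SOM}_{(d,d-1),n}$: (i) a single connected component, (ii) automatic connection of the two hyperfaces at any hyperedge with exactly two neighbours, and (iii) that the specified connections at any hyperedge with $M>2$ neighbouring hyperfaces satisfy the osculating condition. Conditions (i) and (ii) are built into the definition of ODW; the substance of the corollary is (iii).

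First I would use the center-coordinate description of \Cref{sec:center-coordinates} to observe that at a hyperedge $l$ with $\mathrm{Int}(\bm{l})=\{i_1,i_2\}$ there are exactly four candidate $(d-1)$-faces neighbouring $l$, namely $\bm{f}_{\pm,\bullet}=\bm{l}\pm\tfrac{1}{2}\bm{i}_1$ and $\bm{f}_{\bullet,\pm}=\bm{l}\pm\tfrac{1}{2}\bm{i}_2$, each separating two of the four hypercubes $\bm{b}_{\pm\pm}=\bm{l}\pm\tfrac{1}{2}\bm{i}_1\pm\tfrac{1}{2}\bm{i}_2$ arranged as a 4-cycle around $l$. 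In an ODW a hyperface is present iff its two adjacent hypercubes carry distinct booleans, so the number of present hyperfaces at $l$ equals the number of sign flips around this 4-cycle, which is always even. Consequently $M\in\{0,2,4\}$, and the osculating condition only needs to be checked when $M=4$.

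Second, when $M=4$ the booleans around the 4-cycle are forced to alternate, giving the pattern $(0,1,0,1)$ or $(1,0,1,0)$, and the ODW rule assigns the matching $\{(\bm{f}_{\bullet,+},\bm{f}_{+,\bullet}),(\bm{f}_{-,\bullet},\bm{f}_{\bullet,-})\}$ in the first case and $\{(\bm{f}_{\bullet,+},\bm{f}_{-,\bullet}),(\bm{f}_{+,\bullet},\bm{f}_{\bullet,-})\}$ in the second. The normalized vectors from $\bm{l}$ to the four face centers are $\pm\bm{i}_1,\pm\bm{i}_2$, hence pairwise distinct; within each matched pair one vector lies along $\pm\bm{i}_1$ and the other along $\pm\bm{i}_2$, so the two unit vectors in any pair are orthogonal. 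The unique comparison of the two pairs therefore has no parallel pair, and the conditional ``if parallel then perpendicular'' clause of the osculating condition is vacuously fulfilled. This verifies (iii) at $l$, and since $l$ was arbitrary, every ODW configuration lies in $\mathrm{SOM}_{(d,d-1),n}$.

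The only real obstacle is the bookkeeping required to translate the boolean vertex-generation data into the unit-vector language used to state the osculating condition; once the $M\leq 4$ count and the evenness of $M$ are in hand, the verification collapses to the observation that both allowed matchings are right-angle pairings. To strengthen $\subset$ to a proper containment, one can exhibit a SOM vertex configuration with $M=3$ at a hyperedge obeying the odd-$M$ osculating condition, which cannot arise from any boolean assignment (since $M$ is forced to be even in an ODW), in analogy with $\mathrm{ODW}_\triangle \subsetneq \mathrm{SOW}_\triangle$ in \Cref{sec:restricted-walks-intro}.
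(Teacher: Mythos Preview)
The paper states this corollary without proof, immediately after the definition of ODWs; indeed, the definition of an ODW already requires the underlying collection of hyperfaces and connections to be a self-osculating object (the phrase ``such that it is a SOW'' in the definition is evidently a typo for SOM), so the inclusion is essentially definitional. Your direct verification of the osculating condition is a legitimate and more explicit route, but it contains a gap.

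The gap is in the parity claim. You argue that at any hyperedge $l$ the number $M$ of incident hyperfaces equals the number of sign flips around the 4-cycle of booleans and is therefore even. This is correct for \emph{bulk} ODW vertex configurations, but the definition of ODW vertex configurations explicitly includes \emph{boundary} configurations obtained by deleting faces from bulk ones. After deletion one can have $M=3$ (or $M=1$) at $l$. Your inclusion argument is therefore incomplete as written. The fix is short: when $M=3$, the surviving matched pair is inherited from a bulk $M=4$ configuration and is hence orthogonal, and the odd-$M$ clause of the osculating condition is satisfied by re-inserting the deleted fourth face as the hypothetical partner of the lone face (again orthogonally). With this addition your proof of the inclusion goes through.

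The same gap undermines your proposed witness for strict containment. Since boundary ODW vertex configurations can have $M=3$, exhibiting a SOM with $M=3$ at some hyperedge does not by itself exclude ODW membership. The correct analogue of the triangular-lattice argument in \Cref{sec:restricted-walks-intro} is to produce a SOM \emph{vertex} configuration---i.e.\ a simultaneous assignment of osculating pairings at several hyperedges meeting a single vertex---that cannot arise from any single boolean assignment on the $2^d$ adjacent hypercubes, not merely a configuration with odd $M$ at one hyperedge.
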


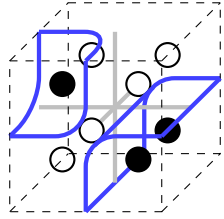
\begin{figure}[H]
\centering
\begin{tikzpicture}[baseline={([yshift=-.5ex] current bounding box.center)}, scale=1]

\draw [rounded corners=4mm, ultra thick, blue!75] (0,-1,-1) -- (0,0,-1) -- (1,0,-1);
\draw [rounded corners=4mm, ultra thick, blue!75] (-1,1,0) -- (-1,0,0) -- (-1,0,1);
\draw [rounded corners=4mm, ultra thick, blue!75] (0,-1,1) -- (0,-1,-1);

\draw [gray!50, ultra thick] (0, -1, 0) to (0, 1, 0);
\draw [gray!50, ultra thick] (-1, 0, 0) to (1, 0, 0);
\draw [gray!50, ultra thick] (0, 0, -1) to (0, 0, 1);

\node[circle, inner sep=0, draw=black, fill=white, minimum size=9, thick] at (-0.5, -0.5, -0.5) {};
\node[circle, inner sep=0, draw=black, fill=black, minimum size=9, thick] at (0.5, -0.5, -0.5) {};
\node[circle, inner sep=0, draw=black, fill=white, minimum size=9, thick] at (-0.5, 0.5, -0.5) {};
\node[circle, inner sep=0, draw=black, fill=white, minimum size=9, thick] at (0.5, 0.5, -0.5) {};

\node[circle, inner sep=0, draw=black, fill=white, minimum size=9, thick] at (-0.5, -0.5, 0.5) {};
\node[circle, inner sep=0, draw=black, fill=black, minimum size=9, thick] at (0.5, -0.5, 0.5) {};
\node[circle, inner sep=0, draw=black, fill=black, minimum size=9, thick] at (-0.5, 0.5, 0.5) {};
\node[circle, inner sep=0, draw=black, fill=white, minimum size=9, thick] at (0.5, 0.5, 0.5) {};

\draw[dashed] (1, 1, 1) -- (-1, 1, 1) -- (-1, -1, 1) -- (1, -1, 1) -- (1, 1, 1);
\draw[dashed] (1, 1, 1) -- (1, 1, -1);
\draw[dashed] (-1, 1, 1) -- (-1, 1, -1);
\draw[dashed] (-1, -1, 1) -- (-1, -1, -1);
\draw[dashed] (1, -1, 1) -- (1, -1, -1);
\draw[dashed] (1, 1, -1) -- (-1, 1, -1) -- (-1, -1, -1) -- (1, -1, -1) -- (1, 1, -1);

\draw [rounded corners=4mm, ultra thick, blue!75] (0,-1,1) -- (0,0,1) -- (1,0,1);
\draw [rounded corners=4mm, ultra thick, blue!75] (-1,0,1) -- (0,0,1) -- (0,1,1);
\draw [rounded corners=4mm, ultra thick, blue!75] (0,1,1) -- (0,1,0) -- (-1,1,0);
\draw [rounded corners=4mm, ultra thick, blue!75] (1,0,1) -- (1,0,-1);
\draw [rounded corners=4mm, ultra thick, blue!75] (0+0.12,0-0.12,1) -- (0+0.12,0-0.12,-1);
\end{tikzpicture}
\caption{Illustration of the generation of an osculating domain-wall vertex configuration for $d=3$. 
Boolean variables are placed at the cubes (i.e.\, $3$-cubes) and are represented by filled or unfilled circles. 
The filled (black) circles are separated from the unfilled (white) circles by a domain wall, a surface whose outline is given in blue. The osculating connections can be specified by looking at each edge, which is neighboured by four faces. The same procedure applies for general $d$, where boolean variables at $d$-cubes are separated by a a domain wall. The osculating connections can be specified by looking at each $(d-1)$-edge.}
\label{fig:osculating-domain-wall}
\end{figure}

\section{Automata method for upper bounds of connective constants of restricted walks} \label{sec:automata}
In this section, we review the automata method of \cite{ponitz2000improved} for SAWs and discuss how it must be modified for SOWs and for generic restricted walks. Using this adapted method, we find the following:
\begin{theorem}
    The connective constant for SOWs on the square lattice verifies the upper bound
    \begin{align}
        \mu^\rm{SOW}_\square \leq 2.73911.
    \end{align}
\end{theorem}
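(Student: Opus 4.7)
The plan is to adapt the automata method of \cite{ponitz2000improved}, originally developed for SAWs. For SAWs, the method constructs a finite automaton that recognises walks avoiding all self-intersection loops of perimeter at most $2m$, for some cutoff $m$; the spectral radius of the associated transition matrix then upper bounds $\mu^\rm{SAW}$, and the bound is monotone non-increasing in $m$. The required adaptation for SOWs is that certain short loops (those corresponding to osculations rather than crossings) must remain permitted in the automaton's language, so the forbidden factor set encodes only \emph{geometric crossings} rather than every vertex revisit.

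First, I would encode each SOW of length $n$ starting at the origin as a word of length $n$ over the alphabet of step directions $\{N, S, E, W\}$ on the square lattice, augmented at every doubly-visited vertex by a label specifying which of the two admissible osculating pairings in \eqref{eq:bulk-vertices-square-generators} is realised. Second, for a chosen cutoff $m$, I would enumerate every short factor of step-directions of length at most $2m$ whose geometric realisation on $\mathbb{Z}^2$ contains at least one true crossing rather than a permitted osculation; these comprise the set $F_m$ of forbidden factors. Third, I construct a deterministic finite automaton $A_m$ whose states track the last $O(m)$ steps together with enough local geometric data at each revisited vertex to decide whether the next step completes a factor in $F_m$, and which accepts exactly the words containing no such factor. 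Every SOW encoding is accepted by $A_m$, so $c_n^\rm{SOW} \leq L_m(n)$, where $L_m(n)$ is the number of words of length $n$ accepted. Letting $T_m$ be the transition matrix of $A_m$ and $\lambda_m$ its Perron--Frobenius eigenvalue, it follows that
\begin{align}
    \mu^\rm{SOW}_\square \;\leq\; \lim_{n\to\infty} L_m(n)^{1/n} \;=\; \lambda_m.
\end{align}

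The final step is to compute $\lambda_m$ numerically, via power iteration on the sparse matrix $T_m$, for the largest $m$ that is computationally feasible, and to check that the resulting bound is at most $2.73911$. The main obstacle is twofold. First, one must correctly classify a short factor as crossing-forcing versus osculation-permissible; unlike the SAW case, where any revisit is forbidden outright, here the verdict depends on the chosen pairing at each repeatedly-visited vertex along the factor, so the forbidden set $F_m$ is genuinely richer. Second, the state space of $A_m$ grows strictly faster in $m$ than its SAW analogue because osculated vertices carry extra structural data, so tractable memory and runtime require a careful canonicalisation of local configurations together with a sparse implementation of $T_m$, mirroring the strategy of \cite{ponitz2000improved}.
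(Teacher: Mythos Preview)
Your proposal is essentially the paper's approach: both adapt the automata method of \cite{ponitz2000improved} by enumerating the rule-violating ``loops'' up to a cutoff size, building a transfer matrix on the space of their proper subpaths, and extracting its largest eigenvalue (the paper's Table reaches $2.73911$ at maximum loop size $k=21$). One simplification you are missing: the osculation-pairing labels you propose to augment the step word with are redundant, since on the square lattice the pairing at a doubly-visited vertex is already determined by the walk's own in/out directions on each pass, so the automaton need only track the last $k-1$ steps and no extra per-vertex data---contrary to your final remark about strictly faster state-space growth.
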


\begin{theorem}
    The connective constant for SOWs on the triangular lattice verifies the upper bound
    \begin{align}
        \mu^\rm{SOW}_\triangle \leq 4.44931.
    \end{align}
\end{theorem}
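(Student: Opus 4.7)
The plan is to apply the automata method generalised in \Cref{sec:automata} directly to the SOW vertex configuration generators on the triangular lattice defined in \Cref{sec:osculating-vertex-definitions}. First I would fix a strip of width $W$ oriented along one of the three lattice directions with the frontier cutting perpendicular to it, then enumerate all admissible local vertex configurations at each lattice vertex by taking rotations, reflections, and truncations of the four SOW generators, together with the boundary-vertex configurations obtained by letting some incident edges leave the strip. Because the coordination number is $6$ rather than $4$, each vertex admits substantially more possibilities than in the square case, including the genuinely new osculating configurations that are forbidden for $\mathrm{SAW}_\triangle$.

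Next I would build the transfer matrix exactly as in \Cref{sec:automata}: its rows and columns are indexed by non-crossing partial-matching states describing how the currently open edge-ends on the frontier are paired as endpoints of a common walk segment, with at most two unmatched ends (the starting point and the moving tip of the walk). An osculating vertex is handled by allowing two disjoint arcs to touch at the same lattice vertex without merging their connected components, which preserves the non-crossing structure of the state and keeps the set of states bounded by a Catalan-type count in $W$. Vertex transitions that would close a loop without completing the walk, or that would prematurely disconnect the walk, are forbidden exactly as in the $\mathrm{SAW}$ case.

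Once the transition matrix $T_W$ is assembled, its Perron eigenvalue $\lambda_W$ yields the bound $\mu^{\mathrm{SOW}}_\triangle \leq \lambda_W$ by the standard bridge argument of \cite{ponitz2000improved}: any SOW can be cut at its extremal-coordinate lines into bridges confined to strips of width $W$, and a generating function counting such bridges has radius of convergence $1/\lambda_W$. For the claimed value $4.44931$ I would pick $W$ just large enough that $\lambda_W$ drops below it, and compute $\lambda_W$ by sparse-matrix power iteration on $T_W$.

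The main obstacle will be the combinatorial bookkeeping on the triangular lattice: correctly enumerating the local vertex transitions, including the truncated boundary-vertex cases, and ensuring that each admissible osculating configuration is counted exactly once in the denser geometry of six incident edges. A secondary difficulty is the rapid growth of the state space, which is larger than the square-lattice analogue at comparable $W$, so that some pruning (for example, restricting admissible loop perimeters up to a fixed length as in the original formulation of \cite{ponitz2000improved}) is essential to make the diagonalisation tractable and to reach a bound as sharp as the one stated.
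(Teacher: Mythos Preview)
Your proposal conflates two very different techniques. The automata method of \cite{ponitz2000improved}, as generalised in \Cref{sec:automata}, is \emph{not} a strip/frontier transfer-matrix method with connectivity states. The state space of the automaton consists of the last $k$ steps of the walk (up to translation, rotation and reflection), and the transfer matrix $M_{\le k}$ advances the walk one step while forbidding any \emph{loop} of size $\le k$ in that tail, where a loop is a minimal disallowed subpath in the sense of \Cref{Loops}. Since every SOW avoids all loops, the set of walks avoiding only short loops is a superset, and its growth rate~$\lambda_1(k)$ is an upper bound on $\mu^{\mathrm{SOW}}_\triangle$. The quoted value $4.44931$ arises from $k=14$ (see \Cref{table:upper-bounds-sow-triangle}); nothing in the argument involves a strip width~$W$, a frontier, or non-crossing partial matchings.

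The approach you describe --- a strip of width $W$, a transfer matrix indexed by frontier matchings, and a bridge decomposition --- does not produce an upper bound. Bridges confined to a strip of width $W$ form a \emph{subset} of all bridges, so the Perron eigenvalue of a strip transfer matrix satisfies $\lambda_W \le \mu$, the wrong direction. Your sentence ``any SOW can be cut at its extremal-coordinate lines into bridges confined to strips of width $W$'' is also false: the Hammersley--Welsh decomposition cuts a walk into bridges of \emph{arbitrary} span, not bounded by a fixed~$W$. Moreover, \cite{ponitz2000improved} does not contain a bridge argument; the upper bound there comes precisely from the loop-relaxation mechanism. To fix the proposal you should replace the strip picture entirely: enumerate loops for the SOW rule on the triangular lattice (including the osculating vertex configurations), build the tail-state automaton as in \Cref{sec:automata}, and extract the largest eigenvalue for increasing~$k$.
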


\begin{theorem}
    The connective constant for ODWs on the triangular lattice verifies the upper bound
    \begin{align}
        \mu^\rm{SOW}_\triangle \leq 4.44867.
    \end{align}
\end{theorem}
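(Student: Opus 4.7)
The plan is to adapt the automata/transfer-matrix framework already set up for $\mathrm{SOW}_\triangle$ in the preceding theorem, modifying only the local rule set so that it enumerates ODWs rather than all SOWs. Since $\mathrm{ODW}_\triangle \subsetneq \mathrm{SOW}_\triangle$ and the restriction is a purely local one on each vertex (the vertex configuration must lie in the ODW generator family defined in \Cref{sec:osculating-vertex-definitions}, not merely in the SOW generator family), the machinery of \cite{ponitz2000improved} applies essentially unchanged: it is still a walk restricted by a local (single-vertex) rule, so the same state-space construction will produce a valid upper bound.

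Concretely, I would proceed as follows. First, fix a strip/cross-section of the triangular lattice of the same width $w$ used to obtain the $\mathrm{SOW}_\triangle$ bound, and re-use the same state encoding: each state records the local profile of strand endpoints (partial matchings of the open ends crossing the cut), together with whatever bookkeeping is required to forbid closing a short loop up to the chosen perimeter cutoff. Second, build the transition relation by sweeping one column (or one vertex) forward and, at each vertex, summing only over the ODW-allowed vertex configurations rather than over all SOW-allowed ones. Because the ODW generators are a strict subset of the SOW generators on $\triangle$ (the excluded triangular-lattice configuration exhibited in \Cref{sec:restricted-walks-intro} is precisely removed), every entry of the ODW transfer matrix $T^{\mathrm{ODW}}$ is dominated by the corresponding entry of $T^{\mathrm{SOW}}$, so by Perron--Frobenius the spectral radius drops, consistent with obtaining $4.44867 < 4.44931$. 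Third, appeal to the standard argument that counts of walks of length $n$ fitting in the strip (equivalently, generated by this automaton) are bounded by a constant times $\lambda^{n}$, where $\lambda$ is the Perron eigenvalue; since any walk in the plane can be seen as fitting in some translate of such a strip up to finitely many boundary configurations, this yields $\mu^{\mathrm{ODW}}_\triangle \leq \lambda$. Finally, compute $\lambda$ by the power method to enough precision to certify $\lambda \leq 4.44867$ (using interval or rational arithmetic in the last step for rigour).

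The main obstacle is not the transfer-matrix setup itself but the careful enumeration of the ODW-allowed vertex generators, including all their rotations, reflections, and edge-truncations (to obtain the boundary vertex blocks). In particular, one must make sure that every reachable local configuration during the sweep is checked against the correct ODW osculation data derived from the underlying Boolean domain-wall assignments, not against the larger SOW rule; otherwise the resulting bound will collapse back to the SOW value. A secondary subtlety is pruning: to get a bound below that of SOWs one needs a sufficiently large loop-perimeter cutoff so that the distinction between ODW and SOW (which first appears at the 6-edge vertex exhibited in the introduction) is felt by the automaton. Once the rule set and cutoff are correctly chosen, the remaining work is routine numerical diagonalisation followed by a rigorous upper-bound certificate on the Perron eigenvalue.
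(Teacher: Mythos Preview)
Your proposal conflates two different transfer-matrix paradigms, and the one you actually describe would not yield an upper bound. The automata method of \cite{ponitz2000improved} used in the paper is \emph{not} a strip/cross-section method: there is no width $w$, no cut through the lattice, and no partial matching of strand endpoints. The states of the automaton are simply the last $k$ steps of the walk (viewed as short paths up to rotation, reflection and translation), and the transition appends one step while rejecting any extension that creates a loop of size $\le k$ in the sense of Definition~\ref{Loops}. The upper bound then comes from the trivial inclusion ``every ODW is in particular a walk with no loops of size $\le k$'', so $c_n^{\mathrm{ODW}} \le c_{n,k}$ and $\mu^{\mathrm{ODW}}_\triangle \le \lambda_1(M_{\le k})$. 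Your sentence ``any walk in the plane can be seen as fitting in some translate of such a strip'' is false for any fixed width, and a strip transfer matrix of the kind you describe produces a \emph{lower} bound on $\mu$, not an upper bound.

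What survives from your proposal is the correct high-level idea: one reruns the same machinery with the rule $R$ changed from SOW to ODW. Concretely, the indicator $\mathbf{1}_R$ becomes the ODW indicator, so the set $\Gamma_{\le k}$ of forbidden loops is \emph{enlarged} (certain paths that were SOW-admissible are now ODW-forbidden because they force a non-ODW vertex configuration), the configuration space $\mathcal{C}_k$ and matrix $M_{\le k}$ are rebuilt accordingly, and the largest eigenvalue drops. The paper does exactly this, tabulating the resulting eigenvalues for $k=4,\dots,14$ (Table~\ref{tab:upper-bounds-modified-sow}) and reading off $4.44867$ at $k=14$. Your remark that the distinction first matters at a six-edge vertex is right in spirit and is visible in the tables: the SOW and ODW bounds agree at $k=4,5$ and only separate from $k=6$ onward. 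To fix your write-up, drop the strip/matching language entirely and phrase everything in terms of the loop set $\Gamma_{\le k}$, the basis $\mathcal{C}_k$ of admissible $k$-histories, and the superset argument for the upper bound.
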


\subsection{Outline of the method}
We briefly describe a method to obtain an upper bound on the connective constant for restricted walks, which are walks where certain paths are disallowed by a given rule. This method is essentially a generalization of the approach presented in \cite{ponitz2000improved} for self-avoiding walks (SAWs).

The overall idea of the method is to count, for a given rule of a restricted walk, the number of paths that do not contain `loops' of size less than or equal to $k$, as defined below. Such loops form a subset of the disallowed walks for the given rule. Consequently, this yields an upper bound on the number of paths for the restricted walk. By increasing the maximum loop size $k$ that is taken into account, we systematically obtain tighter bounds.

\begin{definition}[Loop of size $k$]
\label{Loops}
Consider a restricted walk with a rule $R$ and denote $\mathbf{1}_R(\gamma)$ a boolean function that takes as input a path and returns $0$ or $1$ depending on whether the path is allowed.
A loop of size $k$, with respect to a restricted walk with a rule $R$,
is a disallowed path $\gamma = (\gamma_1, \gamma_2, \dots, \gamma_k)$
of length $k$, $\mathbf{1}_R(\gamma) = 0$, such that every subpaths
$\gamma_{[n,k]} := (\gamma_n, \gamma_{n+1}, \dots, \gamma_k) \quad \text{for all } n \in \{2, \dots, k-1\}
$ is an allowed path $\mathbf{1}_R(\gamma_{[n,k]})=1$.
\end{definition}


As discussed, we would like to obtain an upper bound by counting paths that do not contain loops of size at most $k$.
In order to achieve this, we construct a transfer matrix $M_{\leq k}$ that counts these paths. In particular, consider a path $\gamma_0$ which is an initial path of unit length connecting the origin with a neighboring vertex. We can identify this path with the basis vector $\gamma_0 \mapsto v_1 = (1,0,0,0,...)^{\intercal}$. Note that this is \textit{not} a basis vector in the lattice we consider. Then the number of walks of size $n$ that do not contain loops of size $\leq k$ is given by
\begin{equation}
    c_{n,k} = \kappa \vec{1}^{\intercal}\left(M_{\leq k}\right)^n v_1 \hspace{1pt},
\end{equation}
where $\kappa$ is the number of nearest neighbors to the origin and $\vec{1} = (1, 1, 1, \dots)^{\intercal}$ is a vector full of ones. The matrix $M_{\leq k}$ essentially evolves a given path one step further imposing that no loops of size $\leq k$ are present.

\subsection{Algorithm}
The automata method is essentially a procedure to build the matrix $M_{\leq k}$.

Consider evolving a given a path $\gamma$ of length $n$ one step further. Consider $\gamma$ such that it does not contain loops of size $\leq k$. Let $\gamma'$ be the resulting path, of length $n+1$, obtained by evolving the path $\gamma$. The task is to check whether $\gamma'$ contains or not a loop of size $\leq k$.
We can write $\gamma' = (\gamma'_1, ... ,\gamma'_{n+1})$ with $\gamma = (\gamma'_1, ..., \gamma'_{n})$. Since we know that $\gamma$ is not a loop of size $\leq k$, we only need to check whether the subpath $(\gamma'_{n+2-k},...,\gamma'_{n+1})$ contains loops of size $\leq k$. If it does, we must discard it otherwise we keep it in the count.
Therefore, we only need to keep track of the last $k$ steps of any given walk and check if this contains any loop of size $\leq k$. Therefore, we list all possible paths of length at most $k$ that do not contain loops of size $\leq k$ (up to translation). Each of these paths correspond then to a basis vector in $\{v_j\}_{j=1}^{|\mathcal{C}_k|}$, where $\mathcal{C}_k$ is the set of all such paths.
 
The process consists of three steps:
\begin{enumerate}
    \item Find all loops of size up at most $k$ up to rotations and translation.
    \item Determine the configuration space $\mathcal{C}_k$ and the corresponding basis state $\{v_{\alpha}\}_{\alpha=1}^{|\mathcal{C}_k|}$.
    \item Construct the matrix $M_{\leq k}$ and extract the largest eigenvalue.
\end{enumerate}

\subsubsection{Finding the loops}
The process begins by identifying all loops of a given size, according to \Cref{Loops}. 
Consider restricted walks that obey a given rule $R$ and let $\mathbf{1}_R(\gamma)$ be a boolean function that takes as input a path and return $0$ if the path does not conform with the rule or $1$ otherwise.
The aim is to find the minimal number of loops of size \( k \) (up to translation). To optimize computational efficiency, we design an algorithm that identifies only those loops which are distinct up to rotations and reflections; mathematically, this corresponds to finding representatives of the equivalence classes of loops.
These loops of size \( k \) are found as follows:
\begin{itemize}
    \item Choose the origin from which to start the path, and force the first step of the walk to always be in a fixed direction, (say eastward).
    \item Eliminate all paths that are equivalent under reflection. This is achieved by discarding any path that, after proceeding eastward for \( n < k \) steps, turns downward. These paths have a reflected counterpart, obtained by mirroring along the east direction, that turns upward (rather than downward) and is thus considered equivalent.
    \item Among the remaining paths, find all paths \(\{\gamma\}\) of length \( k \) such that every subpath \(\gamma_{[n,k]}\) for all \( n \in [2, k-1] \) is allowed, i.e., \(\mathbf{1}_R(\gamma_{[n,k]}) = 1\), but the full path is not allowed, \(\mathbf{1}_R(\gamma) = 0\) (see \Cref{Loops}). 
    These selected paths correspond to the loops of size \( k \) up to rotations and reflections.
\end{itemize}
After this process, we have determined the set of loops $\Gamma_k$ of length $k$.
\subsubsection{Determining the basis}
Using the previous process, we can determine the set of loops of size up to $k$ which we denote $\Gamma_{\leq k}$. We now need to find the basis in which to construct the matrix $M_{\leq k}$.
Let $\gamma \in \Gamma_{\leq k}$, then we consider the set of all possible subpaths with the last step removed: $\gamma \to S(\gamma) = \{\gamma_{[1,m]}| \forall m \in [2,k-1] \}$. Paths in $S(\gamma)$ are by construction allowed paths: $\mathbf{1}_R(\gamma') = 1$ $\forall \gamma' \in S(\gamma)$.
The configuration space, $\mathcal{C}$, is nothing but the union of the the sets $S(\gamma)$:
\begin{equation}
    \mathcal{C}_{k} = \bigcup_{\gamma \in \Gamma_{\leq k}}S(\gamma) = \{\gamma_{[1,m]}| \; \forall m \in [2,\mathrm{len}(\gamma)-1] \; \forall \gamma \in \Gamma_{\leq k}\}.
\end{equation}
Here, $\mathrm{len}(\gamma)$ is the length of path $\gamma$. We then assign a standard basis vector to each element of $\mathcal{C}_k$, this is the basis in which the matrix $M_{\leq k}$ is expressed. Consequenlty, $M_{\leq k}$ is a matrix of dimension $|\mathcal{C}_k|$.
\subsubsection{Constructing the matrix}
The previous step of the algorithm gives us a suitable basis $\{v_{\alpha}\}_{\alpha=1}^{|\mathcal{C}_k|}$ to represent the matrix $M_{\leq k}$. To determine the matrix elements we evolve each path, which corresponds to a given basis vector $v_{\alpha}$, by one step and obtain a collection of augmented valid paths: $v_{\alpha} \to \{v_{\beta,\alpha}\}_{\beta}$. We then check to which path in $\{v_{\alpha}\}$, $v_{\beta,\alpha}$ is equivalent to. This equivalence is determined recursively as follows:
\begin{enumerate}
    \item Consider the subpath \( v_{\beta,\alpha,1} \), where the extra subscript indicates that the first step of the walk has been removed. If \( v_{\beta,\alpha,1} \) coincides, up to rotations, reflections and translation, to a path \( v_i \in \{ v_{\alpha} \}_{\alpha = 1}^{|\mathcal{C}_{k}|} \), then we say that \( v_{\beta,\alpha} \) is equivalent to \( v_i \). We denote this equivalence by
    \[
    v_{\beta,\alpha} \sim v_i.
    \]
    \item If $v_{\beta,\alpha,1}$ is not equivalent to any path in the set, 
    then we consider $v_{\beta,\alpha,2}$, i.e., the subpath obtained 
    by removing the first two steps of the walk, and repeat Step~1 
    with $v_{\beta,\alpha,2}$ in place of $v_{\beta,\alpha,1}$.
\end{enumerate}

We then assign the matrix elements as follows: if \( v_{\beta,\alpha} \sim v_{i} \), we increment by one the entry labeled \((i, \alpha)\). This procedure is repeated for all \(\beta, \alpha\). 
We then need to extract the largest eigenvalue. To simplify numerical computations, we observe that as \( k \) increases, the matrix dimension grows correspondingly; however, many elements of the matrix are zero. This sparsity arises because, given a graph with degree \( d \) (i.e., each vertex has \( d \) nearest neighbors), the mapping induced by extending the path one step further,
\[
v_{\alpha} \to \{ v_{\beta,\alpha} \}_{\beta},
\]
produces at most \( d - 1 \) new paths, independent of \( k \). Consequently, as \( k \) increases, the matrix becomes progressively sparser, allowing efficient storage and computations using sparse matrix packages.

    

The steps of the automata method are summarised below.
\begin{enumerate}
    \item Find all loops of size $\leq k$ (identified up to translation, rotations and reflections):
\[
\Gamma_{\leq k} = \{\gamma \mid \gamma \; \text{ is a loop of size } \leq k\}.
\]
\item Find all subpaths $\mathcal{C}_k$ and assign a standard basis vector to each element of $\mathcal{C}_k$.

\item Evolve each subpath $\lambda \in \mathcal{C}_{k}$ one step further discarding loops of size $\leq k$. This gives a set of new paths $\{\lambda_{j}\}$. Identify each path $\lambda_j$ with a suitable element in $\mathcal{C}_k: \lambda_j \simeq \gamma$. 
Add a $1$ in the corresponding matrix element.
\end{enumerate}

This method provides a general approach for obtaining an upper bound on a restricted walk 
characterized by a rule $R$ that specifies which vertex configurations are permitted. 
In the next section, we review the original method applied to self-avoiding walks (SAWs) 
and consider the simple case $k=4$, where the matrix $M_{\leq 4}$ can be written explicitly 
and its largest eigenvalue determined. 
We then analyze self-osculating walks (SOWs) on the square and triangular lattices, 
deriving progressively sharper upper bounds using the automata method.

\subsection{Self-avoiding walks on the square lattice}
It is instructive to consider a minimal example to understand how the automata method works. We consider SAWs on the square lattice with $k=4$.
From the previous discussion, it follows that we need to keep track of paths with loops of size $2$ and loops of size $4$ only. These two loops are depicted in Fig~\ref{fig:Loops k=4}, where we fix the first step to be to the right.
\begin{figure}
    \centering
\begin{center}
\begin{tikzpicture}
    
    \draw[rounded corners=0.5mm] (0,0) -- (1, 0) -- (1,0.1) -- (0, 0.1);
    \filldraw[black] (0,0) circle (2pt);
    
    \draw (5.5,-0.5) rectangle (6.5,0.5);

    \filldraw[black] (5.5,-0.5) circle (2pt);
\end{tikzpicture}
\end{center}
    \caption{Loops of size less than or equal to $4$ for SAWs starting from the origin (black dot). On the left a loop of size $2$ and on the right a loop of size $4$. We fix the orientation of the first step to be to the right.}
    \label{fig:Loops k=4}
\end{figure}
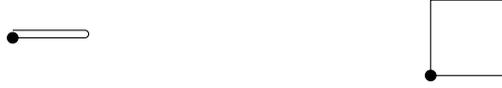

The next step of the algorithm is to determine the configurations space $\mathcal{C}_{4}$. In this case, the paths in $\mathcal{C}_{4}$ are the following (up to rotations and reflections)
\begin{equation}
v_1 = \begin{tikzpicture}[baseline={([yshift=-.5ex]current bounding box.center)}]
    \draw (-1,0) -- (0,0);
    \filldraw[black] (-1,0) circle (2pt);
\end{tikzpicture} 
\hspace{2cm}
v_2 = \begin{tikzpicture}[baseline={([yshift=-.5]current bounding box.center)}]
    \draw (1,0) -- (2,0);
    \draw (2,0) -- (2,1);
    \filldraw[black] (1,0) circle (2pt);
\end{tikzpicture} 
\hspace{2cm}
v_3 = \begin{tikzpicture}[baseline={([yshift=-.5ex]current bounding box.center)}]
    \draw (3,0) -- (4,0);
    \draw (4,0) -- (4,1);
    \draw (4,1) -- (3,1);
    \filldraw[black] (3,0) circle (2pt);
\end{tikzpicture}
\end{equation}
The next step is to evolve each paths $\{v_{\alpha}\}_{\alpha = 1}^{3}$ one step further and identify them with suitable element in $\mathcal{C}_{4}$.

It can be verified that $M_{\leq 4}v_{1} = 2v_{2} + v_{1}$,  $M_{\leq 4}v_{2} = v_{3} + v_2+v_{1}$ and $M_{\leq 4}v_{3} = v_2 +v_{1}$. Hence, the matrix $M_{\leq 4}$ in this basis is given by
\begin{equation}
 M_{\leq 4} = \begin{bmatrix}
     1 & 1 & 1 \\
     2 & 1 & 1 \\
     0 & 1 & 0     
 \end{bmatrix}.
\end{equation}
In terms of this matrix, we can express the number of walks of length $n$ such that there are no loops of size $2$ or $4$ is
\begin{equation}
    c_{n,4} = 4\mathbf{1}^{\intercal}(M_{\leq 4})^n v_1.
\end{equation}
The large $n$ asymptotic of this expression gives us an upper bound on $\mu$ the connective constant of the square lattice defined as
\begin{equation}
    \mu = \lim_{n \to \infty}c_n^{1/n} \hspace{1pt}.
\end{equation}

The upper bound on $\mu$ is given by $\lambda_1 \approx 2.83118$, the eigenvalue with the largest absolute value of the $3 \times 3$ matrix. This procedure can be iterated to obtain the following upper bound on the connective  constant for SAW, $\mu^{\mathrm{SAW}}_\square\leq 2.6792$ \cite{ponitz2000improved}.

This method is quite general. In the following sections, we adapt it to SOWs, and in \Cref{apdx:modified-walk-square}, we discuss a modified self-avoiding walk on the square lattice.

\subsection{Self-osculating walks on the square lattice}
We can generalise the previous methods to SOWs on the square lattice. As before, we consider an example with a low value of $k$ first: $k=5$. The first thing to do is to determine the loops. To do so recall that the allowed vertices for SOWs are given by Eq.~\eqref{eq:bulk-vertices-square} and Eq.~\eqref{eq:boundary-vertices-square}.
The only loops of size less that or equal to $5$ are depicted in Fig.~\ref{fig:Loops k=5}.
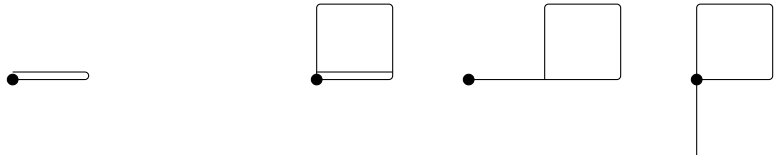
\begin{figure}[H]
    \centering
\begin{center}
\begin{tikzpicture}
    
    \draw[rounded corners=0.5mm] (0,0) -- (1, 0) -- (1,0.1) -- (0, 0.1);
    \filldraw[black] (0,0) circle (2pt);

    \begin{scope}[shift={(4,0)}]
    \draw[rounded corners=0.5mm] 
        (0,0) -- 
        (1,0) -- 
        (1,1) -- 
        (0,1) -- 
        (0,0);
        
    \draw[rounded corners=0.5mm, black] 
        (0,0.1) -- (1,0.1);
    
    \filldraw[black] (0,0) circle (2pt);  
    \end{scope}

    \begin{scope}[shift={(6,0)}]
    \draw[rounded corners=0.5mm] 
        (0,0) -- 
        (1,0) -- 
        (2,0) -- 
        (2,1) -- 
        (1,1) --
        (1,0) ;
    \filldraw[black] (0,0) circle (2pt);  
    \end{scope}

    \begin{scope}[shift={(9,0)}]
    \draw[rounded corners=0.5mm] 
        (0,0) -- 
        (1,0) -- 
        (1,1) -- 
        (0,1) -- 
        (0,0) --
        (0,-1) ;
    \filldraw[black] (0,0) circle (2pt);  
    \end{scope}
    
\end{tikzpicture}
\end{center}
    \caption{Loops of size less than or equal to $5$ for SOWs starting from the origin (black dot). On the left a loop of size $2$ and on the right the three loops of size $5$. We fix the orientation of the first step to be to the right.}
    \label{fig:Loops k=5}
\end{figure}
As before, we can then construct the basis ${v_{\alpha}}$ which is obtained by considering the subpaths of these loops where the last step is removed. In this case, we find a $7 \times 7$ matrix with the largest eigenvalue given by $\lambda_{1}(5) = 2.86055$.
Iterating this procedure for different values of \( k \) yields progressively refined upper bounds, which are presented in Table~\ref{table:upper-bounds-sow}.

\begin{table}
    \centering
\begin{tabular}{r r}
    Maximum accounted loop size & Upper bound for $\mu_\square^\rm{SOW}$ \\
    \hline
    5 & 2.86055 \\
    7 & 2.82042 \\
    9 & 2.79208 \\
    11 & 2.77524 \\
    13 & 2.76333 \\
    15 & 2.75475 \\
    17 & 2.74824 \\
    19 & 2.74316 \\
    21 & 2.73911
    \end{tabular}
    \hspace{3em}
    \caption{Rigorous upper bounds for SOWs on the square lattice, obtained using the automata method.}
    \label{table:upper-bounds-sow}
\end{table}

\subsection{Self-osculating walks on the triangular lattice}
Consider the SOW on the triangular lattice, defined in \Cref{sec:osculating-vertex-definitions}. Unlike SOWs on the square lattice, where only loops of odd length starting from $5$ are present (excluding the trivial back-and-forth loop of length $2$), the triangular lattice admits loops of all lengths starting from $4$. 
As before, we consider an example with a small value of $k$. The trivial case $k=2$ yields $\mu^{\mathrm{SOW}}_{\triangle} \leq 5$, since out of the $6$ possible nearest neighbors of a vertex we can only select at most $5$ of them. A refined upper bound arises from considering $k=4$, in this case the loops are given in Fig.~\ref{fig:Loops k=4 triangle}.
\begin{figure}[H]
    \centering
\begin{center}
\begin{tikzpicture}
    
    \draw[rounded corners=0.5mm] (0,0) -- (1, 0) -- (1,0.1) -- (0, 0.1);
    \filldraw[black] (0,0) circle (2pt);

    \begin{scope}[shift={(4,0)}]
    \draw[rounded corners=0.5mm] 
        (0,0) -- 
        (1,0) -- 
        ({1/2}, {sqrt(3)/2})--
        (0,0);
        
    \draw[rounded corners=0.5mm, black] 
        (0,0.07) -- (1,0.07);
    
    \filldraw[black] (0,0) circle (2pt);  
    \end{scope}

    \begin{scope}[shift={(6,0)}]
    \draw[rounded corners=0.5mm] 
        (0,0) --
        (1,0) -- 
        ({1+1/2}, {sqrt(3)/2})--
        ({1-1/2}, {sqrt(3)/2})--
        ({1+0.2*cos(2*180/3)}, {0.2*sin(2*180/3)});
    \filldraw[black] (0,0) circle (2pt);  
    \draw[thick,black] ({1+0.14*cos(2*180/3)}, {0.14*sin(2*180/3)}) circle (2pt);
    \end{scope}

    \begin{scope}[shift={(8,0)}]
    \draw[rounded corners=0.5mm] 
        (0,0) -- 
        (1,0) -- 
        ({1/2}, {sqrt(3)/2})--
        (0,0)--
        ({1/2}, {-sqrt(3)/2});

    \filldraw[black] (0,0) circle (2pt);  
    \end{scope}
    
\end{tikzpicture}
\end{center}
    \caption{Loops of length less than or equal to 4 for self-osculating walks (SOWs) on the triangular lattice, starting from the origin (black dot). The first step is always taken to the right; all other loops of the same length can be generated from these by applying rotations and reflections. In the third loop from the left, an empty circle is used to indicate the endpoint, making the direction of the walk unambiguous.}
    \label{fig:Loops k=4 triangle}
\end{figure}
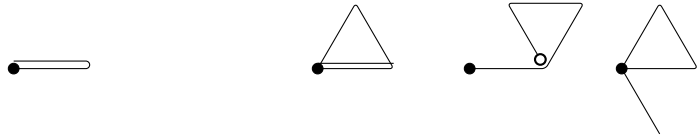
We can then find the basis $\{v_{\alpha}\}$ and construct the matrix $M_{\leq k}$, whose largest eigenvalue $\lambda_1(k)$ yields an upper bound for the connective constant of SOWs on the triangular lattice as presented in Table~\ref{table:upper-bounds-sow-triangle}.
\begin{table}[H]
    \centering
\begin{tabular}{r r}
    Maximum accounted loop size & Upper bound for $\mu_\triangle^\rm{SOW}$ \\
    \hline
    4 & 4.81152 \\
    5 & 4.70066 \\
    6 & 4.63539 \\
    7 & 4.55209  \\
    8 & 4.55209\\
    9 & 4.52473\\
    10 & 4.50327\\
    11 & 4.48587 \\
    12 & 4.47151\\
    13 & 4.45950 \\
    14 & 4.44931

    \end{tabular}
    \caption{Rigorous upper bounds for SOWs on the triangular lattice, obtained using the automata method.}
    \label{table:upper-bounds-sow-triangle}
\end{table}

We also applied the automata method to $\mathrm{ODW}_\triangle$, shown in \Cref{tab:upper-bounds-modified-sow}.

\begin{table}[H]
    \centering
    \begin{tabular}{r r}
        Loop size \(n\) & Upper bounds \(\mu^{\rm{ODW}}_{\triangle}\) \\
        \hline
        4  & 4.81152 \\
        5  & 4.70066 \\
        6  & 4.63518 \\
        7  & 4.58768 \\
        8  & 4.55164 \\
        9  & 4.52423 \\
        10 & 4.50273 \\
        11 & 4.48529 \\
        12 & 4.47092 \\
        13 & 4.45889 \\
        14 & 4.44867
    \end{tabular}
     \caption{Upper bounds for \(\mu_{\triangle}^{\rm{ODW}}\) obtained using the automata method for the osculating domain wall walks on the triangular lattice.}
    \label{tab:upper-bounds-modified-sow}
\end{table}

\section{Existence and bounds of growth constants for restricted $k$-manifolds on the $d$-dim hypercubic lattice} \label{sec:existence-and-upper-bound}
In this section, we prove the existence of growth constants of various restricted manifolds:
\begin{theorem}[Existence of growth constants] \label{thm:existence}
    The growth constants for closed $(d,k)$-SAMs, $(d,k)$-SAMs, $(d,k)$-SOMs, and $(d,k)$-XDs, given by $\mu^\rm{SAM}_{(d,k)}(h=0)$, $\mu^\rm{SAM}_{(d,k)}$, $\mu^\rm{SOM}_{(d,k)}$, and $\mu^\rm{XD}_{(d,k)}$, respectively, exist.
\end{theorem}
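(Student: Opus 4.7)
The plan is to prove existence of each growth constant by establishing an exponential upper bound $c_n \leq K^n$ together with a supermultiplicative inequality $c_n\,c_m \leq c_{n+m}$, and then invoking Fekete's lemma. These two ingredients make $(\ln c_n)_n$ a superadditive sequence bounded above by $n \ln K$, so by the superadditive version of Fekete's lemma the limit $\mu := \lim_{n\to\infty} c_n^{1/n}$ exists, is finite, and equals $\sup_{n \geq 1} c_n^{1/n}$. The same framework applies to each of the four variants, with modifications required for the closed case.

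For the upper bound I would fix a lexicographic order on $k$-faces using the center-coordinate language of \Cref{sec:center-coordinates} (ordering first by the integer coordinates, then by the half-integer orientation tuple). Since manifolds are identified up to translation, we may translate so that the lex-minimum $k$-face $f_0$ of any manifold $M$ sits at a fixed reference position. A canonical BFS spanning tree rooted at $f_0$ then encodes $M$ by a sequence of choices; the neighbour analysis of \Cref{sec:center-coordinates} shows a $k$-face has at most $N(d,k) = 2k\bigl(1 + 2(d-k)\bigr)$ neighbouring $k$-faces, so the tree requires at most $n - 1$ choices from an alphabet of size $N$. For SOMs and XDs one additionally records, at each $(k-1)$-edge incident to $r > 2$ faces, the pairing/connection data, of which there are at most $(2r-1)!!$ choices with $r$ bounded in terms of $d,k$; this contributes at most an additional bounded factor per face. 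In all four variants this yields $c_n \leq K^n$ for some finite $K = K(d,k)$.

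For the supermultiplicative bound I would use a concatenation along the $+\bm{e}_1$ direction, adapted from \cite{van1989self}. Given restricted manifolds $A, B$ of $k$-areas $n, m$, let $f_A$ be the lex-maximum $k$-face of $A$ and $f_B$ the lex-minimum of $B$, and translate so that $\bm{f}_B$ is the immediate same-orientation $+\bm{e}_1$-neighbour of $\bm{f}_A$ (choosing a canonical orientation-changing $+\bm{e}_1$-neighbour if the orientation of $f_A$ does not include the first axis). Because $f_A$ has strictly the largest $x_1$-coordinate in $A$ and $f_B$ strictly the smallest in $B$, the two pieces are disjoint and the union $A \cup B$ is connected through the single $(k-1)$-edge shared by $f_A$ and $f_B$. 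This bridge edge carries exactly two $k$-faces, so the self-avoiding condition is trivially satisfied and no osculating choice is needed, making $A \cup B$ a valid SAM---hence also a valid SOM and XD---of $k$-area $n+m$. Injectivity is recovered because in $C := A \cup B$ the hyperplane separating $A$ from $B$ is the unique one at which a clean cut leaves the left piece with exactly $n$ faces whose lex-maximum lies at that hyperplane.

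The main obstacle is the closed case $h = 0$, for which the naive bridge concatenation fails: $f_A$'s boundary $(k-1)$-edges are already saturated inside the closed $A$, so adding a new face $f_B$ at the bridge would create an edge with three incident faces. To handle $\mu^\rm{SAM}_{(d,k)}(h=0)$ I would adapt the Durhuus--van Beijeren--Knops strategy referenced in \Cref{sec:restricted-manifolds-intro}: canonically remove a single $k$-face from each of $A$ and $B$ to open up matching $(k-1)$-cycle boundaries, translate so the two cycles coincide, and glue. This produces a closed manifold of $k$-area $n + m - 2$, giving $c^{(0)}_n \, c^{(0)}_m \leq c^{(0)}_{n + m - 2}$; the bounded index shift does not affect the existence of $\lim_n (c^{(0)}_n)^{1/n}$. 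A secondary subtlety---that for SOM and XD the concatenation cut must remain recoverable in the presence of edges with more than two incident faces---is sidestepped in our construction because the bridge edge carries only the two bridge faces, leaving no osculating ambiguity to resolve.
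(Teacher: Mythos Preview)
Your upper-bound step is essentially the paper's, and is fine. The concatenation step, however, has a genuine gap. You assume you can translate $B$ so that its lex-minimum face $f_B$ becomes a $(+\bm{e}_1)$-neighbour of $f_A$, but translation does not change the orientation $\mathrm{HalfInt}(f_B)$. Two $k$-faces can share a $(k-1)$-edge only if $\lvert\mathrm{HalfInt}(f_A)\cap\mathrm{HalfInt}(f_B)\rvert\geq k-1$; for $2k\leq d$ this can fail completely (e.g.\ $d=4$, $k=2$, $\mathrm{HalfInt}(f_A)=\{1,2\}$, $\mathrm{HalfInt}(f_B)=\{3,4\}$), so no translation of $B$ makes $f_B$ adjacent to $f_A$ at all. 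Your parenthetical about an ``orientation-changing $+\bm{e}_1$-neighbour'' does not help, because it prescribes a target orientation that $f_B$ need not have. A second issue is the claim that ``$f_A$ has strictly the largest $x_1$-coordinate in $A$'': lex-maximality only gives the maximum, not strict uniqueness, so many faces of $A$ can sit at the top $x_1$-level and can then overlap faces of (translated) $B$ at its bottom level, or create $(k-1)$-edges with three or more incident $k$-faces. Your injectivity argument via a ``unique separating hyperplane'' inherits the same problem. The closed-case gluing you sketch has the analogous defect: after removing one face from each of $A$ and $B$ and translating so the two $(k-1)$-cycles coincide, nothing prevents the remaining bulks of $A$ and $B$ from colliding.

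The paper resolves exactly these difficulties by inserting a fixed connector of $2(k^2+3k-1)$ extra $k$-faces between $\Sigma_{\mathrm{bot}}$ and $\Sigma_{\mathrm{top}}$: first two ``buffer'' $(k+1)$-cube boundaries (or an equivalent chain of $2k$ single faces followed by one cube) to push strictly above $[\bm{f}^{(m)}_{\mathrm{bot}}]_1$ and strictly below $[\bm{f}^{(1)}_{\mathrm{top}}]_1$, guaranteeing disjointness; then $k-1$ further cube boundaries that trade orientations one at a time from $\mathrm{HalfInt}(f^{(m)}_{\mathrm{bot}})$ toward $\mathrm{HalfInt}(f^{(1)}_{\mathrm{top}})$. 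This yields only pseudo-superadditivity $c_n c_m \leq c_{n+m+2(k^2+3k-1)}$, not $c_n c_m\leq c_{n+m}$, so plain Fekete is replaced by the Wilker--Whittington generalisation \cite{wilker1979extension}. For the closed case no face removal is needed: adding boundaries of $(k+1)$-cubes contributes no new boundary components, so the same connector keeps $h=0$.
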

We will also prove general and explicit upper bounds:
\begin{theorem} \label{thm:sam-0-upper-bound}
    The growth constants for closed $(d, k)$-SAMs are upper bounded by
    \begin{align}
        \mu^{\mathrm{SAM}}_{(d,k)}(h=0) \leq (2(d-k)+1).
    \end{align}
\end{theorem}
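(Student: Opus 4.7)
The plan is to adapt Durhuus's argument for closed self-avoiding surfaces---which yields $\mu^{\mathrm{SAS}}_{(d)}(0) \leq 2d-3$, matching the claimed formula at $k=2$---to general $k$-manifolds via a canonical BFS-style, face-by-face encoding. The goal is to produce an injection from closed $(d,k)$-SAMs of hyperarea $n$ (up to translation) into a set of encodings of the form (root data)$\times$(sequence of $n-1$ choices) with a branching factor of $2(d-k)+1$ per choice.

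First I would fix a total lexicographic order on $k$-faces of $\mathbb{Z}^d$ using their center coordinates, as introduced in \Cref{sec:center-coordinates}. Given a closed SAM $S$ with $n$ faces, I let $f_1$ be the lex-smallest face in $S$; because SAMs are counted modulo translation, I translate $S$ so $f_1$'s center sits at a canonical position, leaving at most $C := \binom{d}{k}$ possible orientations for $f_1$. For $i \geq 2$, I define $e_i$ to be the lex-smallest $(k-1)$-edge that is contained in exactly one face of the partial SAM $\{f_1,\ldots,f_{i-1}\}$ (a \emph{boundary edge}), and let $f_i$ be the unique face of $S$ containing $e_i$ other than the one already in the partial SAM. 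The crucial counting input, visible from the center-coordinate picture, is that a given $(k-1)$-edge is contained in exactly $2(d-k+1)$ $k$-faces of $\mathbb{Z}^d$; since one of these is already in the partial SAM, there are at most $2(d-k+1)-1=2(d-k)+1$ possibilities for $f_i$. Injectivity of the encoding is immediate from the fact that the lex rule makes $f_1$, each $e_i$, and hence each $f_i$ a deterministic function of $S$, so that
\begin{equation*}
c_n^{\mathrm{SAM}(0)} \;\leq\; C \cdot (2(d-k)+1)^{n-1},
\end{equation*}
and taking $n$-th roots gives the bound.

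The main obstacle, and the only nontrivial step, is verifying that the procedure is well-defined for every $i \leq n$, i.e.\ that the partial SAM always has a boundary edge and that $f_i$ is unambiguously determined. If the partial SAM had no boundary edge, every $(k-1)$-edge appearing in it would already be shared by two of its faces; by the self-avoidance of $S$, no face of $S\setminus\{f_1,\ldots,f_{i-1}\}$ could then share a $(k-1)$-edge with the partial SAM, contradicting connectedness of $S$ in the face-sharing sense unless $i-1=n$. Once $e_i$ exists, the closedness of $S$ guarantees $e_i$ is contained in exactly two faces of $S$, pinning down $f_i$. Everything else---defining the lex order on half-integer centers, picking a canonical translation representative for $f_1$, bookkeeping the at-most-$\binom{d}{k}$ orientation choices---is routine and does not affect the branching constant $2(d-k)+1$, so the bound $\mu^{\mathrm{SAM}}_{(d,k)}(h=0) \leq 2(d-k)+1$ follows.
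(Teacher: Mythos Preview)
Your proof is correct and follows essentially the same approach as the paper's Lemma~\ref{lem:closed-som-num-upper-bound}: both encode a closed SAM by fixing the lex-smallest face (at a cost of $\binom{d}{k}$ orientations) and then, exploiting closedness, specify each of the remaining $n-1$ faces by a $(2(d-k)+1)$-ary choice at a boundary $(k-1)$-edge of the partial configuration. The only cosmetic difference is the traversal order---you always pick the lex-smallest boundary edge of the entire partial SAM, whereas the paper processes faces in discovery order and, for each, runs through its not-yet-saturated edges---but both yield the same injection and the same bound $c_n \leq \binom{d}{k}(2(d-k)+1)^{n-1}$.
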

\begin{theorem} \label{thm:sam-som-upper-bound}
    The growth constants for $(d,k)$-SAMs and $(d,k)$-SOMs are upper bounded by
    \begin{align}
        \mu_{(d,k)}^\rm{SAM} \leq \mu_{(d,k)}^\rm{SOM} \leq \frac{(2 k-1)^{2 k-1}}{(2k-2)^{2 k-2} } (2(d-k)+1).
    \end{align}
\end{theorem}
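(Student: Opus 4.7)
The first inequality $\mu^{\mathrm{SAM}}_{(d,k)} \leq \mu^{\mathrm{SOM}}_{(d,k)}$ follows immediately from the set-level inclusion $\mathrm{SAM}_{(d,k),n}\subseteq\mathrm{SOM}_{(d,k),n}$ noted in the introduction, so the substantive content is the upper bound on $\mu^{\mathrm{SOM}}_{(d,k)}$. My plan is a Klarner--Eden-style encoding of each SOM as a rooted, ordered, edge-labelled tree, followed by a Fuss--Catalan / Stirling count that produces the two constituent factors of the bound.

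For $M \in \mathrm{SOM}_{(d,k),n}$, I would first use the centre-coordinate framework of \Cref{sec:center-coordinates} to pick a canonical root $k$-face $r(M)$, namely the lex-smallest centre in $M$ (after translating so that $r(M)$ sits at a fixed base position). Form the adjacency graph $G_M$ whose vertices are the $k$-faces of $M$ and whose edges are the osculation pairings at shared $(k-1)$-edges; because $M$ has one connected component, $G_M$ is connected, and the canonical DFS spanning tree $T_M$ rooted at $r(M)$ with children visited in the lex order of the parent's $2k$ $(k-1)$-boundaries is therefore well defined. Since a $k$-face has exactly $2k$ $(k-1)$-boundaries, one of which is consumed by the parent edge for every non-root node, $T_M$ is a plane tree with maximum out-degree at most $2k-1$ at each non-root vertex. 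Each of the $n-1$ tree edges then carries a label $\ell \in \{1,\dots,2(d-k)+1\}$ recording which of the $2(d-k)+1$ candidate $k$-faces meeting the parent's chosen $(k-1)$-boundary plays the role of the child. The number of rooted plane trees on $n$ nodes with maximum out-degree at most $2k-1$ is bounded, up to a polynomial factor, by the Fuss--Catalan number $\tfrac{1}{(2k-2)n+1}\binom{(2k-1)n}{n}$, whose $n$-th root tends by Stirling to $(2k-1)^{2k-1}/(2k-2)^{2k-2}$; multiplying by the $(2(d-k)+1)^{n-1}$ label choices and extracting $n$-th roots then gives the stated bound, once combined with the existence result of \Cref{thm:existence}.

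The main obstacle is showing that this encoding is at most polynomially many-to-one on $\mathrm{SOM}_{(d,k),n}$, so that the tree-and-label count genuinely upper-bounds $|\mathrm{SOM}_{(d,k),n}|$. Tracing the labels from the root unambiguously reconstructs the face set of $M$, but two distinct SOMs that share the same face set and the same canonical spanning tree may still differ in the osculation pairings at those $(k-1)$-edges where three or four $k$-faces of $M$ meet and some of the pairings are non-tree edges. I would resolve this either by sharpening the canonical DFS tie-breaking rule so that every $(k-1)$-edge with $\geq 3$ incident faces of $M$ has its pairing forced by the order in which its incident faces are enumerated, or by showing that the extra pairing data at such multi-face edges can be absorbed into at most an $\mathcal{O}(1)$ enlargement of the per-edge alphabet — an enlargement that can then be shown not to inflate the claimed constant $(2k-1)^{2k-1}/(2k-2)^{2k-2}(2(d-k)+1)$. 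Carrying out this bookkeeping cleanly, especially in the regime $k<d$ where osculations are genuinely active, is the delicate part of the argument.
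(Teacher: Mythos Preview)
Your encoding is essentially the paper's proof of Lemma~4.1: there too one roots at the lex-smallest $k$-face, runs a BFS that follows the osculation pairings, and records for each of the $2k-1$ non-parent $(k-1)$-edges of each face a variable $q\in\{0,1,\dots,2(d-k)+1\}$ that is nonzero exactly when a new face is introduced through that edge; counting sequences with exactly $n-1$ nonzero $q$'s gives $\binom{d}{k}\binom{(2k-1)(n-1)+1}{n-1}(2(d-k)+1)^{n-1}$, and Stirling on the middle binomial produces precisely your constant $(2k-1)^{2k-1}/(2k-2)^{2k-2}$. Your Fuss--Catalan count of $(2k-1)$-ary plane trees is a cosmetic repackaging of the same combinatorics with the same exponential rate.

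As for the obstacle you single out --- that two SOMs with identical face set and spanning tree might differ in their non-tree osculation pairings --- the paper does not engage with it. Its only SOM-specific modification is the one-line remark that during BFS one indexes a neighbouring face only if it is actually paired with the current face; it then applies the identical ``exactly $n-1$ nonzero'' count used for SAMs and simply asserts this upper-bounds $c^{\mathrm{SOM}}_{(d,k),n}$. So the delicate bookkeeping you propose to carry out is absent from the paper's argument; your plan already matches it, and you are being more scrupulous than the source about injectivity.
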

\begin{theorem} \label{thm:xd-upper-bound}
    The growth constant for $(d,k)$-XDs is upper bounded by
    \begin{equation}
        \mu^\rm{XD}_{(d, k)} \leq \frac{w(d,k)^{w(d,k)}}{(w(d,k)-1)^{w(d,k)-1}},
    \end{equation}
    where $w(d,k) = (2k-1)(2(d-k)+1)$.
\end{theorem}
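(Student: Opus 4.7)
The plan is to adapt the classical Klarner spanning-tree bound \cite{klarner1973procedure} from polyominoes to $(d,k)$-XDs. To each XD I would associate a canonical rooted spanning tree on its constituent $k$-faces, bound the branching degree at each non-root node by $w(d,k)$, and then count the resulting trees by a binomial estimate whose $n$-th root tends to the stated value.

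\textbf{Canonical rooted tree.} For an XD of $k$-area $n$, I fix the root to be the $k$-face whose center coordinate is smallest in a fixed lexicographic order, and perform a breadth-first search with a fixed tie-breaking rule on the connectivity hypergraph (vertices are $k$-faces, hyperedges are shared $(k-1)$-edges). Each non-root $k$-face is then attached to its parent through a unique shared $(k-1)$-edge, its \emph{parent edge}, uniqueness holding because two distinct $k$-faces in the hypercubic lattice share at most one $(k-1)$-edge. The XD is recoverable from the resulting rooted tree once each non-root node carries an index labelling the geometric position of the child relative to its parent.

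\textbf{Branching bound.} I claim every non-root $k$-face has at most $w(d,k)=(2k-1)(2(d-k)+1)$ children. A $k$-face has $2k$ boundary $(k-1)$-edges; the parent edge contributes no children, since under BFS any other $k$-face sharing the parent edge with the current face was already discovered (and recorded as a sibling) by the parent in an earlier step. Each of the remaining $2k-1$ boundary edges can host children, and by the center-coordinate analysis of \Cref{sec:center-coordinates} every $(k-1)$-edge is neighbored by exactly $2(d-k+1)$ $k$-faces (formed by adjoining one of its $d-k+1$ integer dimensions with either sign). Excluding the current $k$-face itself leaves $2(d-k)+1$ candidate child positions per non-parent edge, for a total of $(2k-1)(2(d-k)+1)=w(d,k)$.

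\textbf{Counting and obstacle.} Recording, for each non-root $k$-face, a $w(d,k)$-bit vector indicating which of its candidate child positions are occupied, and similarly a vector of length $W=2k(2(d-k)+1)$ for the root, and concatenating in BFS order, I obtain a binary string of length $W+w(d,k)(n-1)$ with exactly $n-1$ ones. Therefore
\begin{equation*}
    c_n^{\mathrm{XD}} \;\leq\; \binom{W+w(d,k)(n-1)}{n-1},
\end{equation*}
and Stirling's formula together with the standard asymptotic $\binom{Bn+O(1)}{n}^{1/n} \to B^B/(B-1)^{B-1}$ (with $B=w(d,k)$) yields $\limsup_n (c_n^{\mathrm{XD}})^{1/n} \leq w(d,k)^{w(d,k)}/(w(d,k)-1)^{w(d,k)-1}$. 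Combined with the existence of $\mu^{\mathrm{XD}}_{(d,k)}$ from Theorem~\ref{thm:existence}, this gives the claim. The main obstacle is the branching step: since XDs (unlike SAMs) permit more than two $k$-faces on a single $(k-1)$-edge, one must verify carefully that the BFS rule places the extra $k$-faces on the parent edge as siblings of the current face rather than as children, so that the cap $w(d,k)$ is not exceeded, and that the tree together with its geometric labels indeed determines the XD uniquely.
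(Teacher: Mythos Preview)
Your proposal is correct and follows essentially the same approach as the paper: build a canonical rooted spanning tree on the $k$-faces starting from the lexicographically smallest one, record at each non-root face a length-$w(d,k)$ binary vector marking which of the $(2k-1)(2(d-k)+1)$ candidate child slots are occupied, and bound $c_n^{\mathrm{XD}}$ by a binomial coefficient whose $n$-th root tends to $w(d,k)^{w(d,k)}/(w(d,k)-1)^{w(d,k)-1}$. Your BFS argument that the parent edge carries no new children (because all faces on that edge were already discovered when the parent was processed) is exactly what is needed to justify the $2k-1$ rather than $2k$ in the branching bound; the only cosmetic omissions are the $\binom{d}{k}$ factor for the root orientation and that the paper drops the last face's bit vector, neither of which affects the growth rate.
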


To prove the existence, we will first prove an exponential upper bound for $\mu_n$ by presenting a general strategy to index manifold configurations. This will later be also used to upper bound the growth constant. Then, we will develop a concatenation method, similar to \cite{van1989self}, which can be used in conjunction with Theorem 1 of \cite{wilker1979extension} to prove that the limit $\mu = \lim_{n \rightarrow \infty} \mu_n$ exists. Throughout, we will be using the concept of center coordinates discussed in \Cref{sec:center-coordinates}.

Lower bounds will also be found by considering `directed walk' configurations (\Cref{sec:lower-bound}), leading to the following lower bounds.
\begin{theorem}
    The connective constant of SAMs is lower bounded as
    \begin{align}
        \mu^{\mathrm{SAM}}_{(d,k)} \geq k(d-k+1).
    \end{align}
\end{theorem}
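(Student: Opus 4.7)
\noindent\emph{Proof proposal.} The plan is to exhibit $[k(d-k+1)]^{n-1}$ distinct self-avoiding $k$-manifolds of hyperarea $n$ via a ``directed growth'' construction that generalises the classical directed-walk argument. I would fix a starting $k$-face $f_1$ with $\mathrm{HalfInt}(\bm{f_1}) = \{1, \ldots, k\}$ and the remaining integer coordinates set to zero, then grow the manifold face-by-face: given $f_i$ with $\mathrm{HalfInt}(\bm{f_i}) = H_i$ and $\mathrm{Int}(\bm{f_i}) = I_i$, choose
\begin{enumerate}
  \item $j \in H_i$ ($k$ options, selecting the boundary $(k-1)$-edge at $\bm{f_i} + \tfrac{1}{2}\bm{j}$);
  \item $j' \in \{j\} \cup I_i$ ($d-k+1$ options);
\end{enumerate}
and set $\bm{f_{i+1}} := \bm{f_i} + \tfrac{1}{2}\bm{j} + \tfrac{1}{2}\bm{j'}$ with $\mathrm{HalfInt}(\bm{f_{i+1}}) := (H_i \setminus \{j\}) \cup \{j'\}$. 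When $j' = j$ this is a straight extrusion ($\bm{f_{i+1}} = \bm{f_i} + \bm{j}$); otherwise $j' \in I_i$ and it is a bend. Each of the $n-1$ growth steps offers exactly $k(d-k+1)$ choices, yielding $[k(d-k+1)]^{n-1}$ sequences in total.

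The central invariant driving the proof will be that the coordinate sum $S_i := \sum_{m=1}^{d}(\bm{f_i})_m$ increases by exactly $1$ per step, since $\tfrac{1}{2}(\bm{j}+\bm{j'})$ contributes $\tfrac{1}{2}+\tfrac{1}{2}=1$ to the total independently of the choices. Three consequences then follow. First, the $f_i$ in any sequence are pairwise distinct and admit a unique $S$-ordering, so the unordered set of faces determines the sequence; different sequences therefore yield different manifolds, and each translation class is counted exactly once (via $f_1$ being the unique minimum-$S$ face). Second, the single-connected-component condition is automatic, since $f_{i+1}$ shares a $(k-1)$-edge with $f_i$ by construction. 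Third, the self-avoidance condition (no $(k-1)$-edge shared by more than two $k$-faces) must be verified: suppose $e$ were a $(k-1)$-edge on the boundary of three faces $f_a, f_b, f_c$ with $a<b<c$; writing $\bm{e} = \bm{f_\ell} + \eta_\ell \tfrac{1}{2}\bm{j_\ell^*}$ with $\eta_\ell \in \{\pm 1\}$ gives $\eta_\ell = 2(S_e - S_\ell)$, and together with $S_\ell = S_a + (\ell - a)$ this yields $\eta_a - \eta_b = 2(b-a)$ and $\eta_b - \eta_c = 2(c-b)$. Since $\eta_\ell \in \{\pm 1\}$, each difference lies in $\{-2, 0, 2\}$, so $b = a+1$ and $c = b+1$; the signs then force $\eta_a = 1$, $\eta_b = -1$, $\eta_c = -3$, a contradiction.

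Combining these facts yields $c_n \geq [k(d-k+1)]^{n-1}$, and by \Nref{thm:existence}
\begin{equation*}
\mu^{\mathrm{SAM}}_{(d,k)} = \lim_{n\to\infty} c_n^{1/n} \geq k(d-k+1).
\end{equation*}
The trickiest step will be the self-avoidance verification; the monotone coordinate-sum invariant is the essential idea that converts the geometric edge-sharing constraint into a short parity argument on the $\pm\tfrac{1}{2}$ signs. As sanity checks, $k=1$ recovers the classical directed-walk bound $\mu^{\mathrm{SAW}} \geq d$; $k=2$ reproduces $2(d-1)$ quoted in the introduction for SASs; and $k=d$ gives $\mu \geq d$ for lattice animals.
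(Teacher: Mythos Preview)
Your proposal is correct and takes essentially the same approach as the paper's own proof: both use ``directed walk'' configurations of $k$-faces, growing one face at a time in the positive coordinate directions, with $k$ choices of outgoing $(k-1)$-edge and $d-k+1$ choices of how to attach the next face. Your write-up is in fact more thorough than the paper's, which simply asserts the construction without checking self-avoidance or injectivity; your coordinate-sum invariant $S_i$ and the parity argument on the $\eta_\ell$ signs make these verifications explicit and clean.
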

\begin{theorem}
    The connective constant of SAMs with no boundaries ($h=0$) is lower bounded as
    \begin{align}
        \mu^{\mathrm{SAM}}_{(d,k)}(h=0) \geq \left((k+1)(d-k)\right)^{1/2k}.
    \end{align}
\end{theorem}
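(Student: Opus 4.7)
The plan is to deduce this closed-SAM lower bound from the preceding open-SAM bound, applied one dimension up, and then to convert $(k+1)$-SAMs into closed $k$-SAMs via the bijection $\mathrm{SAM}_{(d,k+1)}(h=1) \leftrightarrow \mathrm{SAM}_{(d,k)}(h=0)$ induced by the boundary map (recalled in the introduction). The key observation is that the rate $(k+1)(d-k)$ in the claim is exactly $k'(d-k'+1)$ with $k' = k+1$, and the exponent $1/(2k)$ matches the boundary-to-volume ratio of a ``thin'' $(k+1)$-polyomino.

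First, I would apply the previous theorem with $k$ replaced by $k+1$: its directed-walk construction yielding $\mu^{\mathrm{SAM}}_{(d,k)} \geq k(d-k+1)$ now produces at least $((k+1)(d-k))^{n-1}$ distinct directed $(k+1)$-SAMs of $(k+1)$-area $n$, each built by iteratively appending a new $(k+1)$-cube along a single shared $k$-face in one of $(k+1)(d-(k+1)+1) = (k+1)(d-k)$ positive directions. Because the construction is strictly positive (every new cube monotonically advances some distinguished center coordinate), the resulting $(k+1)$-polyomino is a topological $(k+1)$-ball — simply connected with exactly one boundary component — and hence lies in $\mathrm{SAM}_{(d,k+1)}(h=1)$.

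Second, I would count the boundary. By a one-step induction, each newly appended cube removes the single shared $k$-face from the current boundary and contributes its remaining $2(k+1)-1 = 2k+1$ boundary $k$-faces, for a net gain of $2k$; together with the $2(k+1)$ faces of the initial cube this gives $|\partial M| = 2kn+2$ for a polyomino of $n$ cubes. Combining with the bijection $M \leftrightarrow \partial M$ between $\mathrm{SAM}_{(d,k+1)}(h=1)$ and $\mathrm{SAM}_{(d,k)}(h=0)$ (distinct $M$'s thus give distinct closed $k$-SAMs), we obtain
\begin{equation}
c^{\mathrm{SAM}(h=0)}_{(d,k),\,2kn+2} \;\geq\; \bigl((k+1)(d-k)\bigr)^{n-1},
\end{equation}
and taking the $(2kn+2)$-th root as $n\to\infty$ yields $\mu^{\mathrm{SAM}}_{(d,k)}(h=0) \geq ((k+1)(d-k))^{1/2k}$.

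The main subtlety I expect is the boundary count $|\partial M| = 2kn+2$: it requires that every newly appended $(k+1)$-cube intersect the existing polyomino in exactly one $k$-face and in no lower-dimensional face, so that no $(k-1)$-edge ever becomes shared by three or more cubes. Under a strict coordinate-monotone directed rule this is automatic, since coordinates only increase along the growth and no earlier cube can reach the new one except through the designated shared $k$-face. If the directed construction of the previous theorem is not already in such a strict form, I would restrict to its obvious monotone sub-construction, which retains the full $(k+1)(d-k)$ branching factor per step and hence does not change the exponential rate.
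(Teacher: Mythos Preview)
Your proposal is correct and follows essentially the same approach as the paper: build directed-walk configurations one dimension up (the paper works at level $k$ and shifts $k-1\to k$ at the end, you shift $k\to k+1$ at the start), take boundaries, and track the linear relation between $(k+1)$-volume and $k$-boundary area to extract the exponent $1/(2k)$. Your boundary count $|\partial M|=2kn+2$ matches the paper's $m=(2k-2)(n-1)+2k$ after the index shift, and you are in fact slightly more explicit than the paper in flagging the need for each new cube to meet the existing polyomino in exactly one $k$-face.
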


Therefore, we have that
\begin{theorem}
For $d > k > 1$, The growth constant for $\mathrm{SAM}_{(d, k)}(h=0)$ is strictly smaller than $\mathrm{SAM}_{(d, k)}$. 

(Specifically, $\mu^{\mathrm{SAM}}_{(d,k)}(h=0) \leq (2(d-k)+1) < k(d-k+1) \leq \mu^{\mathrm{SAM}}_{(d,k)}$)
\end{theorem}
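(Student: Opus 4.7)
The plan is to chain together the two bounds already stated in the excerpt (\Cref{thm:sam-0-upper-bound} for the closed case, and the directed-walk lower bound for the open case) and then verify the intermediate numerical inequality $(2(d-k)+1) < k(d-k+1)$ whenever $d > k > 1$. In other words, one reduces the statement to a purely arithmetic comparison after invoking the previously established upper and lower bounds as black boxes.

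First I would write
\begin{align*}
\mu^{\mathrm{SAM}}_{(d,k)}(h=0) \;\leq\; 2(d-k)+1
\quad \text{and} \quad
\mu^{\mathrm{SAM}}_{(d,k)} \;\geq\; k(d-k+1),
\end{align*}
citing the two theorems above. Since the question of strict inequality between the two growth constants then reduces to comparing these explicit upper and lower bounds, it suffices to show $2(d-k)+1 < k(d-k+1)$ on the domain $d > k > 1$ (equivalently $k \geq 2$ and $d-k \geq 1$).

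The arithmetic step is short: set $a := d-k \geq 1$, so the claim becomes $2a+1 < k(a+1)$. Rearranging,
\begin{align*}
k(a+1) - (2a+1) \;=\; a(k-2) + (k-1) \;\geq\; 0 + 1 \;=\; 1 \;>\; 0,
\end{align*}
using $k \geq 2$ (hence $a(k-2)\geq 0$) and $k-1 \geq 1$. Therefore the strict inequality $2(d-k)+1 < k(d-k+1)$ holds on the full range $d > k > 1$, and combining with the previous line gives $\mu^{\mathrm{SAM}}_{(d,k)}(h=0) < \mu^{\mathrm{SAM}}_{(d,k)}$ as required.

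There is no substantive obstacle here: all the heavy lifting (labelling/concatenation for the upper bound, directed-walk embedding for the lower bound) is done in the earlier theorems of \Cref{sec:existence-and-upper-bound}, and what remains is the elementary algebraic check above. The one thing worth flagging is the role of the hypothesis $k > 1$: when $k = 1$ the quantity $a(k-2) = -a$ can exceed $k-1 = 0$, so the inequality can fail, which is consistent with the known fact that for walks the closed (polygon) and open connective constants coincide. Thus the assumption $k > 1$ is used in exactly the step $a(k-2) \geq 0$.
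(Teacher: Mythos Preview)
Your proposal is correct and matches the paper's approach exactly: the paper presents this theorem immediately after the relevant upper and lower bound theorems, with the parenthetical chain of inequalities serving as the entire proof, leaving the verification of the middle strict inequality to the reader. Your explicit algebraic check with $a = d-k$ fills in that gap cleanly, and your remark on why $k>1$ is needed is a nice addition.
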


We can look at specific cases of \Cref{thm:sam-som-upper-bound}. For example, for $(d=4, k=3)$, we have
\begin{corollary}
    The growth constant of self-osculating volumes (SOVs), ODWs, and self-avoiding volumes (SAVs) on the tesseractic lattice is upper bounded as
    \begin{align}
        \mu_{\tesseract}^\rm{SAV} \leq \mu_{\tesseract}^\rm{ODW} \leq \mu_{\tesseract}^\rm{SOV} \leq \frac{9375}{256} = 36.6211.
    \end{align}
\end{corollary}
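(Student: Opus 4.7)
The plan is to derive this corollary as a direct specialization of Theorem~\ref{thm:sam-som-upper-bound} to $(d,k) = (4,3)$, combined with the containment hierarchy already established in Sections~\ref{sec:restricted-manifolds-intro} and~\ref{sec:osculating}. First I would identify the objects: on the tesseractic lattice, $\mathrm{SAV}_{\tesseract}$ is precisely $\mathrm{SAM}_{(4,3)}$, $\mathrm{SOV}_{\tesseract}$ is $\mathrm{SOM}_{(4,3)}$, and since $k = d-1 = 3$, the ODW construction of Section~\ref{sec:osculating} applies and produces $\mathrm{ODW}_{\tesseract} = \mathrm{ODW}_{\mathbb{Z}^4,n}$.

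For the chain $\mu^{\rm SAV}_{\tesseract} \leq \mu^{\rm ODW}_{\tesseract} \leq \mu^{\rm SOV}_{\tesseract}$, I would use the set inclusions at fixed hypervolume $n$, namely $\mathrm{SAM}_{(4,3),n} \subseteq \mathrm{ODW}_{\mathbb{Z}^4,n} \subseteq \mathrm{SOM}_{(4,3),n}$. The right-hand containment is exactly the corollary at the end of Section~\ref{sec:osculating}. For the left-hand containment, note that in a SAM each $(k-1)$-edge carries at most two $k$-faces: bulk edges with two faces are realised as ordinary domain walls between two oppositely labelled neighbouring $d$-cubes (so the osculating condition is vacuous), while boundary edges carrying a single face are realised by the truncation step already built into the definition of the ODW vertex configurations. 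Existence of all three growth constants, guaranteed by Theorem~\ref{thm:existence}, then lets the inclusions pass to the limit.

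For the numerical bound I substitute $d=4$ and $k=3$ directly into Theorem~\ref{thm:sam-som-upper-bound}: we obtain $2k-1 = 5$, $2k-2 = 4$, and $2(d-k)+1 = 3$, so
\begin{equation*}
\mu^{\rm SOV}_{\tesseract} = \mu^{\rm SOM}_{(4,3)} \leq \frac{5^5}{4^4}\cdot 3 = \frac{3125\cdot 3}{256} = \frac{9375}{256} = 36.6211\ldots
\end{equation*}
There is no substantive obstacle here, as all the genuine content is packaged inside Theorem~\ref{thm:sam-som-upper-bound}; the only point warranting a moment of care is the SAM-to-ODW vertex containment in the regime $k = d-1$, which reduces to the local observation that a SAM vertex configuration is either a domain-wall configuration coming from some Boolean labelling of the eight surrounding $4$-cubes or a truncation thereof.
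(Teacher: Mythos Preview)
Your proposal is correct and matches the paper's approach: the corollary is simply Theorem~\ref{thm:sam-som-upper-bound} specialised to $(d,k)=(4,3)$, together with the containments $\mathrm{SAM}_{(d,d-1),n} \subseteq \mathrm{ODW}_{(d),n} \subseteq \mathrm{SOM}_{(d,d-1),n}$, and the paper itself offers no further detail than this. One minor slip: at a vertex of $\mathbb{Z}^4$ there are $2^4=16$ surrounding $4$-cubes, not eight.
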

For $(d=3,k=2)$, we have the upper bound for SASs as $\mu^\rm{SAS}_\cube \leq 81/4 = 20.25$, which is already improved compared to existing upper bound $\mu^\rm{SAS}_\cube \leq 31.2504$ of \cite{van1989self}\footnote{This is after correcting a small mistake in their proof.}. In \Cref{sec:twig-method}, we will improve these bounds via adapting the `twig' method.  Finally, note that setting $k=1$ in \Cref{thm:sam-som-upper-bound}, the upper bound is equal to the connective constant of non-reversing walks, $2d-1$.

\subsection{General strategy to upper bound $c_n$}
First, we describe a general strategy to upper bound $c_n$. We will first treat the case of SASs as considered by \cite{van1989self}. We refine their argument to achieve an improved bound for any $d$. Next, we will generalise the refined argument to arbitrary $(d, k)$-SAMs, $(d,k)$-SOMs, and $(d,k)$-XDs.

Consider $\Sigma$, a SAS in $\mathbb{Z}^d$ composed of $n$ faces. Each face can be located using a vector of its centre coordinates. This allows lexicographical ordering of the faces. Let $f^{(1)}$ be the face that is the smallest in lexicographical order. As it is lexicographically the smallest, there are no faces below $f^{(1)}$ in the 1st dimension in $\Sigma$.

$f^{(1)}$ neighbours four edges with centre coordinates $\bm{f}^{(1)} \pm \frac12 \bm{j}_{1,2}$ where $j_{1,2} \in \mathrm{HalfInt}(\bm{f}^{(1)})$. The centre coordinates of these edges can again be lexicographically ordered. In lexicographical order of the edges, if there is another face attached to an edge, we label it $\bm{f}^{(2)}$ then $\bm{f}^{(3)}$ and so on, with increasing index. After this, we do the same analysis for $\bm{f}^{(2)}$, labelling any faces that are previously not labelled. In this way, we can unambigously labelled the faces.

Note that this numbering scheme also works for SOSs and XDs with small modifications. For SOSs, we will need to check whether the new unindexed face neighbouring the edges neighbours the original face, which need not be if there are more than two faces neighbouring an edge. Only if the new unindexed face is neighbouring the original face will it be indexed at that step. In the case of XDs, more than two faces can neighbour an edge. Therefore, the (possibly) multiple faces connected via an edge are again indexed in lexicographical order.

Back to SASs, \cite{van1989self} uses the following scheme to specify the surface configurations. At each edge, there are $(2d-3)$ orientations in which another face can be attached to $\bm{f}_1$ via this edge, which can be ordered lexicographically. So, for each of the $4$ edges neighbouring a face, we consider $(2d-3)$ boolean variables, which are all set to $0$ is there is no face with a higher index attached to it, and if there is a face, one of the $(2d-3)$ of the boolean variables is set to $1$, corresponding to the orientation of the attached face with a higher index. The last face cannot have neighbours with a higher index, and therefore does not have the boolean variables. Additionally, the orientation of the first face must be specified, of which there are $d \choose 2$.

It then follows that any SAS of area $n$ can be specified as one $d \choose 2$-nary variable followed by a sequence of $4(2d-3)(n-1)$ boolean variables. That is, there is an injection between SASs of area $n$ and such sequences, since not all such sequences result in a SAS.

One can further constrain the space of these sequences by noting that there should be exactly $(n-1)$ boolean variables that are set to $1$ and the rest to $0$. This is because one boolean on an edge is turned on only when it introduces a face with a new index, which we are introducing $(n-1)$ of. Then the number of SASs with area $n$ can be upper bounded as

\begin{align}
    c^\mathrm{SAS}_{\mathbb{Z}^d,n} \leq {d \choose 2} {4(2d-3)(n-1) \choose n-1} \leq {d \choose 2} K^{n-1}, \quad \text{where} \quad K = \frac{(8 d-12)^{8 d-12}}{(8 d-13)^{8 d-13}}.
\end{align}

Here, standard bounds on the binomial coefficient was used to exponentially bound $c_n$~\footnote{This is a minor correction to Theorem 2.1 of \cite{van1989self}, where it states that $K = (4(2d-3)+1)^{4(2d-3)+1} / (4(2d-3)-1)^{4(2d-3)-1} = (8d-11)^{8d-11}/(8d-13)^{8d-13}$, which is incorrect.}.

Note that if one can prove that the growth constant exists, then it can be upper bounded by $K$.

\subsection{Upper bound for self-avoiding and osculating $k$-manifolds on the $d$-dim hypercubic lattice}
We now provide a more efficient scheme of encoding SASs as well as other restricted and higher dimensional generalisations, giving a better upper bound as following.
\begin{lemma} \label{lem:som-num-upper-bound}
    The number of $(d,k)$-SOMs with $k$-area $n$ can be exponentially upper bounded as
    \begin{align}
        c^\mathrm{SOM}_{(d,k),n} \leq \binom{d}{k} \left(\frac{(2 k-1)^{2 k-1}}{(2k-2)^{2 k-2}}\right)^n (2(d-k)+1)^{n-1}.
    \end{align}
\end{lemma}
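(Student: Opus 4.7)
The plan is to refine the SAS encoding used just above and extend it to general $(d,k)$-SOMs. Given $\Sigma \in \mathrm{SOM}_{(d,k), n}$, I would label its $n$ constituent $k$-faces $f^{(1)}, \ldots, f^{(n)}$ by a BFS-style rule: let $f^{(1)}$ be the lexicographically smallest $k$-face (in terms of centre coordinates); then, processing the faces in the order of their assigned index, at each $f^{(j)}$ I scan the $2k$ neighbouring $(k-1)$-edges in lexicographic order, and whenever $f^{(j)}$ is paired (under the specified osculating connections) with an as-yet-unlabelled $k$-face via such an edge, that partner receives the next available index. Since $\Sigma$ has a single connected component, this procedure labels every face exactly once. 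For each $j \geq 2$, there is a unique \emph{entrance edge}: the $(k-1)$-edge through which $f^{(j)}$ was first paired with a lower-index face.

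Next I would encode $\Sigma$ by: (i) the orientation of $f^{(1)}$, a choice out of $\binom{d}{k}$ indicating which $k$ of the $d$ dimensions are half-integer; (ii) for each $f^{(j)}$ and each of its non-entrance neighbouring edges (in lexicographic order), a label recording either ``no new pair via this edge'' or ``new pair with orientation $o$'', where $o$ is one of the $2(d-k)+1$ allowed orientations enumerated in \Cref{sec:center-coordinates}. The entrance edge of each non-root $f^{(j)}$ need not be specified because its centre coordinates are fixed by the parent's edge plus the recorded orientation, from which the full list of its neighbouring edges can be reconstructed. Reading the encoding in order $j = 1, 2, \dots$ reconstructs $\Sigma$ face by face, so the map is injective.

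The total number of non-entrance slots is $2k + (n-1)(2k-1) = (2k-1)n + 1$, of which exactly $n-1$ must be filled (one per face other than the root). This yields
\begin{equation*}
    c^{\mathrm{SOM}}_{(d,k), n} \leq \binom{d}{k} \binom{(2k-1)n+1}{n-1} (2(d-k)+1)^{n-1}.
\end{equation*}
To finish, I would first verify $\binom{(2k-1)n+1}{n-1} \leq \binom{(2k-1)n}{n}$ for $k \geq 2$ and $n \geq 1$ via a short ratio computation (the inequality reduces to $n((2k-1)n+1) \leq ((2k-2)n+1)((2k-2)n+2)$, which holds termwise because $(2k-2)^2 \geq 2k-1$ and $3(2k-2) \geq 1$ for $k \geq 2$), and then apply the standard estimate $\binom{m}{r} \leq m^m/(r^r(m-r)^{m-r})$ with $m=(2k-1)n$ and $r=n$ to obtain $\binom{(2k-1)n}{n} \leq \left(\tfrac{(2k-1)^{2k-1}}{(2k-2)^{2k-2}}\right)^n$.

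The main obstacle will be the bookkeeping in the SOM setting: at a $(k-1)$-edge where more than two $k$-faces meet, the encoding must record only $f^{(j)}$'s designated partner (under the specified osculating pairing), not every $k$-face at that edge. I need to check that the BFS-style indexing together with the ``label each new partner when first encountered'' rule introduces each new face exactly once, so that the $n-1$ filled slots genuinely enumerate the faces beyond the root, with the remaining pairs at a shared edge being introduced later from their own lower-index pair-mates.
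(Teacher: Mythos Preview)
Your proposal is correct and follows essentially the same route as the paper: the BFS-style labelling, the ``skip the entrance edge'' trick, and the $(2(d-k)+1)$-ary orientation label on each remaining slot are exactly the paper's encoding, and the paper likewise finishes with the standard entropy bound on the central binomial. The only cosmetic difference is that the paper also drops the slots on the last-indexed face, yielding $\binom{(2k-1)(n-1)+1}{n-1}$ rather than your $\binom{(2k-1)n+1}{n-1}$; your extra step verifying $\binom{(2k-1)n+1}{n-1}\le\binom{(2k-1)n}{n}$ then lands on the same exponential bound.
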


\begin{proof}
    The previous indexing scheme can be improved by modifying the two following aspects.

First, we assign one $(2d-2)$-ary number to each edge instead of $(2d-3)$ binary numbers. One each edge of a face, we will assign a variable $q \in \{0, 1, 2, 3, \dots, 2d-3\}$, where
\begin{itemize}
    \item if no other face of a higher index is attached to the edge, then we set $q = 0$.
    \item If another face of a higher index is attached to the edge, then $q \in \{1,2,3...,2d-3\}$, where $2d-3$ corresponds to the possible ways two surfaces can share the given edge.
\end{itemize}

Second, we only assign $q$ to three of the edges instead of all four edges to a face, except the first face $f^{(1)}$. This is because one of the edges always connects back to another face with a lower index, and therefore can be unspecified. This leaves us with the first face $\bm{f}^{(1)}$, which we will equip with four $q$ variables, which will not change the overall limit. Again, the last face does not need to have variables assigned.

We can then upper bound the number of $n$ SOSs in $d$ dimension, $c_{\mathbb{Z}^d, n}^{\mathrm{SOS}}$, by considering all possible sequences of $q$ variables such that, out of the total $3(n-2) + 4 = 3(n-1) + 1$ of $q$ variables, exactly $n-1$ of them are nonzero. This constraint imposes that $n-1$ edges are shared, so that there are $n$ surfaces in total. The $q$ variables assigned to these $n-1$ edges can take any value in $\{1,2,3, \dots,2d-3\}$. Therefore, we get the following improved upper bound:

\begin{equation}
    c_{\mathbb{Z}^d, n}^{\mathrm{SOS}} \leq \binom{d}{2}\binom{3(n-1) + 1}{n-1} (2d-3)^{n-1} \leq {d \choose 2} \left(\frac{27}{4}\right)^n (2d-3)^{n-1},
\end{equation}
The factor $\binom{3(n-1) + 1}{n-1}$ accounts for all possible ways in which we can choose the $n-1$ nonzero $q$ variables out of a total of $3(n-2)+4$. The factor $(2d-3)^{n-1}$ reflects the possible values that these nonzero $q$ variables can take and $\binom{d}{2}$ are the possible orientations of the initial surface. This serves as an upper bound for $c_n(d,2)$ because, although the construction ensures that at most two surfaces can share a given edge, it does not eliminate the possibility of overlapping surfaces.

A similar method can be used to upper bound the number restricted $k$-manifolds in $\mathbb{Z}^{d}$ with $n$ elements, denoted $c_n(d,k)$. In this case, we glue together $n$ $k$-faces at their $(k-1)$-edges such that each $(k-1)$-edge is used at most twice.

The bound is given by:
\begin{equation}
    c_{(d,k), n}^{\rm{SOM}} \leq \binom{d}{k} \binom{(2k-1)(n-1) + 1}{n-1} (2(d-k)+1)^{n-1}.
\end{equation}
The binomial $\binom{d}{k}$ reflects the possible orientations of the first $k$-cube. The factor $\binom{(2k-1)(n-1)+1}{n-1}$ indicates the possible ways in which we can choose the $n-1$ nonzero $q$ variables out of a total of $(2k-1)(n-2) + 2k = (2k-1)(n-1) + 1$. This follows because a $k$-cube has $2k$ $(k-1)$-edges. Therefore, given a $k$-cube we assign $2k-1$ $q$-variables, since at most $2k-1$ of its $(k-1)$-edges can be shared except for the $k$-face with the first index. There are $2(d-k)+1$ ways to attach an additional $k$-cube to a $(k-1)$-face of a given $k$-cube; therefore, $q \in {0,1,2...,2(d-k)+1}$. Again applying standard bounds on the binomial coefficient gives the result of the lemma. 
\end{proof}

\subsection{Upper bound for $(d,k)$-polyominoids}
A $(d,k)$-XD is a sequences of $k$-faces that are obtained by attaching the next $k$-face to one of its $(k-1)$-edge, allowing for a $(k-1)$-edge to be used multiple times, with the constraint that there are no overlapping $k$-faces. Using a similar strategy as before, we will prove the following lemma.
\begin{lemma} \label{lem:xd-num-upper-bound}
    The number of $(d,k)$-XDs of $k$-area $n$ is upper bounded as
    \begin{align}
        c^\mathrm{XD}_{(d,k), n} \leq {d \choose k} \left(\frac{w(d,k)^{w(d,k)}}{(w(d,k)-1)^{w(d,k)-1}}\right)^{n-1},
    \end{align}
    where $w(d, k) = (2k-1)(2(d-k)+1)$.
\end{lemma}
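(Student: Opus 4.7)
The plan is to mimic the labelling scheme of \Cref{lem:som-num-upper-bound}, replacing the single $q$-variable per edge by a richer bookkeeping that accommodates arbitrarily many $k$-faces meeting at a common $(k-1)$-edge. Given a $(d,k)$-XD $\Sigma$ with $n$ $k$-faces, I index the faces $f^{(1)}, \dots, f^{(n)}$ by the same lex/BFS procedure: $f^{(1)}$ is the lex-smallest face (with respect to centre coordinates), and successive faces are indexed by scanning the $(k-1)$-edges of already-indexed faces in lex order and assigning a new label whenever an unseen face is encountered through a shared edge. Each face $f^{(i)}$ with $i\ge 2$ has at least one $(k-1)$-edge linking it to a lower-indexed face, so it has at most $2k-1$ ``forward'' edges along which higher-indexed faces can attach; $f^{(1)}$ has all $2k$ forward edges, and $f^{(n)}$ requires no forward data.

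The key change from the SOM case is that in an XD a single $(k-1)$-edge can be shared by arbitrarily many $k$-faces with no osculating restriction. So at each forward edge of $f^{(i)}$ I record $m:=2(d-k)+1$ binary variables — one for each possible orientation of a higher-indexed $k$-face meeting that edge — where a $1$ indicates that such an attachment exists and a $0$ that it does not. The total number of binary variables equals
\begin{align*}
    m\bigl[(2k-1)(n-1)+1\bigr] \;=\; w(d,k)(n-1)+m,
\end{align*}
and, crucially, exactly $n-1$ of them are set to $1$ (one per newly introduced face). The map from $\mathrm{XD}_{(d,k), n}$ to (orientation of $f^{(1)}$) $\times$ (such bit strings) is injective by construction: given $f^{(1)}$ and its orientation, one recursively reconstructs $\Sigma$ by reading off the attached faces at each forward edge in lex order, and no separate pairing data is required because by convention every $k$-face at a common $(k-1)$-edge in an XD is deemed connected.

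Counting bit strings of length $w(d,k)(n-1)+m$ with exactly $n-1$ ones and applying the standard estimate
\begin{align*}
    \binom{wN+c}{N} \;\le\; \left(\frac{w^w}{(w-1)^{w-1}}\right)^{N}
\end{align*}
(valid up to a sub-exponential prefactor that is irrelevant for the growth-constant statement of \Cref{thm:xd-upper-bound}) with $N=n-1$ and $w=w(d,k)$ yields the desired exponential bound, with the $\binom{d}{k}$ factor accounting for the orientation of $f^{(1)}$. I expect the main point of care to be the injectivity and well-definedness of the encoding rather than the binomial estimate itself: one must verify that the lex/BFS traversal and bit-string record jointly reconstruct $\Sigma$ even when many $k$-faces sit on the same $(k-1)$-edge. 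This follows the same template as the SOM proof and uses only that the BFS order is deterministic from $\Sigma$ and that XDs impose no pairing data beyond incidence.
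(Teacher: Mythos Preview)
Your proposal is correct and follows essentially the same approach as the paper: the paper likewise replaces the single $(2(d-k)+2)$-ary $q$-variable per forward edge used in the SOM encoding by $2(d-k)+1$ independent binary variables per forward edge, obtains the same binomial $\binom{w(d,k)(n-1)+(2(d-k)+1)}{n-1}$, and applies the standard exponential bound. Your explicit remark that the binomial estimate holds only up to a sub-exponential prefactor (harmless for \Cref{thm:xd-upper-bound}) is in fact more careful than the paper, which glosses over the $+m$ offset when passing to the exponential form.
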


In this case, we can assign to each of the $(k-1)$-edge (minus $1$, except for $\bm{f}^{(1)}$) of a $k$-faces as many binary variables as there are orientations. We do not assign binary variables to one $(k-1)$-edge of the $k$-face because at least one of these $(k-1)$-faces is shared; hence we do not want to have overlapping binary variables located at the shared $(k-1)$-edge.
If the $i$-th binary variable $b_i$ belonging to a $(k-1)$-edge of a given $k$-face is $0$ it means that we do not glue a $k$-cube associated to the $i$-th orientation to that face. On the other hand, if the variable is $1$ then the cube is attached to the face. A $k$-cube has $2k$ $(k-1)$-edges. Given a $(k-1)$-edge belonging to a $k$-face, there are $2(d-k)+1$ possible orientations in which the next a $k$-cube can be glued. Therefore, if we have $n$ $k$-cube, there are a total of $(2k-1)(2(d-k)+1)n$ binary variables. Since we want to upper bound the number of $(d,k)$-polyomionoid with $n$ $k$-faces, $c_{(d,k),n}^\rm{XD}$ we need to force $n-1$ of these binary variable to take the value $1$ and all the other to be $0$. This reasoning gives the following upper-bound:
\begin{equation}
    c_{(d,k),n}^\rm{XD} \leq {d \choose k} \binom{w(d,k)(n-1) + (2(d-k)+1)}{n-1}.
\end{equation}
with $w(d,k) = (2k-1)(2(d-k)+1)$ which is the number of binary variables at each $(k-1)$-face. The binomial coefficient is exponentially upper bounded to retrieve the lemma.

\subsection{Upper bound for closed self-avoiding manifolds}
For closed $(d,k)$-SAMs, we can prove a tighter upper bound.
\begin{lemma}[] \label{lem:closed-som-num-upper-bound}
    The number of closed $(d,k)$-SAMs with hyperarea $n$ is upper bounded by
    \begin{align}
        c^{\mathrm{SAM}}_{(d,k),n}(h=0) \leq {d \choose k} (2(d-k)+1)^{n-1}.
    \end{align}
\end{lemma}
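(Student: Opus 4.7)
The plan is to refine the indexing scheme of \Cref{lem:som-num-upper-bound} by exploiting that a closed SAM has no boundary: every $(k-1)$-edge neighbouring a $k$-face in the manifold has exactly two $k$-faces incident to it. This eliminates the need to encode whether a neighbouring face exists, so the only information that must be recorded at each step is the orientation of each newly introduced face. In particular, the combinatorial factor $\binom{(2k-1)(n-1)+1}{n-1}$ that appeared in the SOM bound (which counted which $q$-variables are nonzero) is no longer needed, since all of them are ``nonzero'' in the closed case.

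First I would carry over the lex-BFS labelling of \Cref{lem:som-num-upper-bound}: let $f^{(1)}$ be the lex-smallest $k$-face, and successively index new faces by scanning the $(k-1)$-edges of already-indexed faces in lex order, assigning each new face the next available integer at the moment it is first encountered. This produces an ordering $f^{(1)}, f^{(2)}, \dots, f^{(n)}$ along with, for each $m\geq 2$, a distinguished ``parent edge'' at which $f^{(m)}$ was first discovered.

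Next I would construct the encoding. The orientation of $f^{(1)}$ contributes the factor $\binom{d}{k}$. For each $m = 2, \dots, n$, the parent edge already has a specified $(k-1)$-edge location (determined by a previously placed face), and by the computation in \Cref{sec:center-coordinates} a $k$-face on the other side of that edge has $2(d-k)+1$ possible orientations excluding the parent's orientation. Recording one such orientation per new face gives $(2(d-k)+1)^{n-1}$ subsequent possibilities. Because the $h=0$ condition guarantees that every edge of every indexed face has a neighbouring $k$-face in the manifold, no 0/nonzero flag is required: while replaying the process, every edge that does not connect back to a previously indexed face must connect to a new face, and this is the $f^{(m)}$ whose orientation is being read off next.

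The main step to verify is that the resulting map from closed SAMs to such sequences is an injection. Given the initial orientation and the $n-1$ subsequent orientations, one reconstructs the SAM by replaying the lex-BFS: at stage $m$, the positions and orientations of $f^{(1)}, \dots, f^{(m-1)}$ are already fixed, which determines the centre of every $(k-1)$-edge bounding them; the next orientation in the sequence, combined with the first such edge (in lex order) whose other neighbour is not yet indexed, pins down $f^{(m)}$ uniquely. The hard part is simply checking that lex-BFS is a deterministic rule depending only on the data placed so far, which is immediate since lex order is a fixed rule on centre coordinates. Counting encodings therefore yields
\[
c^{\mathrm{SAM}}_{(d,k),n}(h=0) \;\leq\; \binom{d}{k}\,(2(d-k)+1)^{n-1},
\]
as claimed.
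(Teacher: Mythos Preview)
Your proof is correct and follows essentially the same approach as the paper: both use the lex-BFS labelling starting from the lex-smallest face, exploit closedness to guarantee that every $(k-1)$-edge of an indexed face carries a neighbour (so only the $(2(d-k)+1)$-ary orientation need be recorded, not a zero/nonzero flag), and count $\binom{d}{k}$ choices for the initial face to obtain the injection. Your explicit replay argument for injectivity is slightly more detailed than the paper's, but the structure is identical.
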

\begin{proof}
    Because the manifold is closed, every hyperedge neighbouring a hyperface in the manifold has another hyperface attached to it. Therefore, we consider the following scheme to build a closed manifold of area $n$.

    Consider the first $k$-face $\bm{f}^{(1)}$, corresponding to the lexicographically smallest $k$-face in the manifold. Consider all of its hyperedges in lexicographical order. Due to the closedness of the manifold, there must be another $k$-face attached to the first $k$-face via this $(k-1)$-edge. Specify the orientation in which it is connected, with a $(2(d-k)+1)$-ary number, and index the new hyperface. Repeat this for all hyperedges bounding $\bm{f}^{(1)}$ in lexicographical order.

    Move to $\bm{f}^{(2)}$. Now, consider all of its neighbouring hyperedges that are not neighbouring other indexed hyperfaces. For each of such hyperedges, specify how a new hyperface is attached to $\bm{f}^{(2)}$ via a $(2(d-k)+1)$-ary number.

    This scheme can be continued until $(n-1)$ hyperfaces have been introduced. Each time a hyperface is introduced, it is specified by a new $(2(d-k)+1)$-anry number. Therefore, combined ${d \choose k}$ possible orientations of the first hyperface, there are ${d \choose k} (2(d-k)+1)^{n-1}$ possible ways of specifying the connections between the hyperfaces. All closed SAMs can be specified in this way, but not all sequences of $(2(d-k)+1)$-anry variables will result in a SAM. Hence, we have an injection and we retrieve the lemma.
\end{proof}

\subsection{Concatenation of manifolds and existence of growth constants}
In this subsection, we will prove \Cref{thm:existence}. To do this, we will develop a method to concatenation two manifolds together, which generalises the procedure introduced in \cite{van1989self}.

\begin{theorem}[Concatenation of manifolds] \label{thm:concatenation}
    For two $(d,k)$-SAMs in $\mathrm{SAM}_n(h)$ and $\mathrm{SAM}_m(g)$, there exists a concatenation procedure that forms an injection,
    \begin{align}
        \mathrm{SAM}_n(h) \times \mathrm{SAM}_m(g) \rightarrowtail \mathrm{SAM}_{n+m+2(k^2+3k-1)}(h+g).
    \end{align}
    The same concatenation procedure can be used for SOMs and $(d,k)$-XDs,
    \begin{align}
        \mathrm{SOM}_n \times \mathrm{SOM}_m & \rightarrowtail \mathrm{SOM}_{n+m+2(k^2+3k-1)}, \\
        \mathrm{XD}_n \times \mathrm{XD}_m & \rightarrowtail \mathrm{XD}_{n+m+2(k^2+3k-1)}.
    \end{align}
\end{theorem}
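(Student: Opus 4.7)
The plan is to adapt van Rensburg's concatenation by fixing a canonical anchor on each manifold via lexicographic ordering, translating the second manifold into an ``above-and-away'' position, and attaching an explicit canonical bridge whose face count is forced to be $2(k^2+3k-1)$; injectivity then follows by showing that the bridge can be read off uniquely from the output.

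First, for each $(d,k)$-SAM $M$, designate a canonical anchor. Using the lexicographic ordering on center coordinates introduced in \Cref{sec:center-coordinates} (priority given to the first coordinate), let $f^{\max}(M)$ and $f^{\min}(M)$ denote the lex-maximal and lex-minimal $k$-faces of $M$. Extremality in the first coordinate guarantees that the open half-space strictly above $f^{\max}(M)$ in the $\bm{1}$-direction contains no $k$-face of $M$, and dually below $f^{\min}(M)$. Translate $M_1$ so that $f^{\max}(M_1)$ sits at a fixed reference position depending only on its orientation in $\mathrm{HalfInt}$, and translate $M_2$ so that $f^{\min}(M_2)$ is offset by a fixed $(d,k)$-dependent displacement in the $+\bm{1}$ direction, chosen large enough that $M_1 \cap M_2 = \emptyset$ in every admissible pair of anchor orientations. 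Insert a canonical bridge $B$ between these two anchors: from each anchor I erect a short ``lifting tab'' of $k$-faces that rotates the anchor's orientation into a fixed neutral orientation spanned by $\bm{1},\dots,\bm{k}$, and then link the two tabs with a straight collar of $k$-faces parallel to $\bm{1}$. A direct enumeration will show that each tab contributes $k^2+3k-2$ faces and the two linking collars add one further face on each side, totalling $2(k^2+3k-1)$.

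Injectivity is then verified by an explicit inverse. Given $M = M_1 \# M_2$, I would compute $f^{\min}(M)$ and $f^{\max}(M)$, trace inward the unique canonical tab-and-collar pattern from each, remove the $2(k^2+3k-1)$ bridge faces thereby identified, and return the two remaining connected components (up to the canonical translations above). Because the bridge is a $k$-disk glued to each anchor along a single $(k-1)$-edge, its attachment contributes no new boundary component, so the total boundary count of the output is exactly $h+g$. The same bridge works verbatim for SOMs and XDs since it is itself self-avoiding and introduces no $(k-1)$-edge shared by more than two faces; in the XD setting the osculation data is vacuous on $B$ and the recovery algorithm is unchanged.

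The main obstacle is the explicit construction of the tab: it must be uniquely identifiable from the face pattern alone (so that the recovery algorithm is well-defined independent of which manifold we started with), its size must hit exactly $k^2+3k-2$ for every $(d,k)$ and every orientation in $\binom{d}{k}$, and its projection onto the first coordinate must lie strictly between the anchor and the collar so that the emptiness guarantee from extremality prevents the tab from colliding with the interior of $M_1$ or $M_2$. Verifying these three requirements simultaneously, and checking that the count $k^2+3k-1$ is exactly what a minimal orientation-change-plus-collar assembly requires, is the combinatorial heart of the proof.
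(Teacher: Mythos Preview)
Your overall shape---pick lex-extremal anchors, translate apart in the $\bm{1}$-direction, insert a canonical bridge, invert by recognising the bridge---matches the paper. But the heart of the paper's construction is a device you do not have, and without it your plan breaks in the $h=0$ case that the theorem is most needed for.

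You describe the bridge as ``a $k$-disk glued to each anchor along a single $(k-1)$-edge''. If $M_1$ is closed (or merely if $f^{\max}(M_1)$ happens to be an interior face), that anchor has \emph{no} free $(k-1)$-edge: every edge of $f^{\max}(M_1)$ already borders two faces. Gluing a disk along such an edge creates a $(k-1)$-edge with three incident faces, so the result is not a SAM at all. Even when a free edge exists, attaching a disk merges the disk's boundary with a boundary component of $M_1$, so the claim that ``its attachment contributes no new boundary component'' and hence that the output has exactly $h+g$ components does not follow. The paper avoids this entirely: the bridge is built from boundaries of $(k+1)$-cubes, which are \emph{closed} $k$-manifolds, and they are attached not by gluing along an edge but by $\mathbb{Z}_2$-addition (symmetric difference) with the existing face set. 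Because $\partial(\text{cube boundary})=\emptyset$, this operation never alters the boundary, and because overlapping faces cancel, it never creates a triply-occupied edge. This is also what makes the exact count $8k + 2k(k-1) - 2 = 2(k^2+3k-1)$ fall out, rather than being something you have to hit by tuning a tab; your assertion that each tab ``contributes $k^2+3k-2$ faces'' has no construction behind it.

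A smaller but separate issue: your inverse computes $f^{\min}(M)$ and $f^{\max}(M)$ of the \emph{concatenated} object and ``traces inward'' from there. Those faces are $f^{\min}(M_1)$ and $f^{\max}(M_2)$, the far ends of the two inputs, not the bridge anchors $f^{\max}(M_1)$ and $f^{\min}(M_2)$; there is nothing canonical to trace from them. The paper does not spell out an inverse either, but its bridge occupies a recognisable range of first coordinates strictly between the two pieces (guaranteed by the buffer step and \Cref{lem:f1-not-boundary}), so one can cut there.
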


Because this is an injection, this immediately gives us the following pseudo-superadditivity properties,
\begin{corollary}[Pseudo-superadditivity of restricted manifolds] \label{lem:pseudo-superadditivity}
    For SAM, SOM, and XDs with $1<k<d$,
    \begin{align}
        c_n c_m & \leq c_{n+m+2(k^2+3k-1)}.
    \end{align}
    In particular, for SAMs,
    \begin{align}
        c_n(h) c_m(g) & \leq c_{n+m+2(k^2+3k-1)}(h+g),
    \end{align}
    and therefore
    \begin{align}
        c_n(0) c_m(0) & \leq c_{n+m+2(k^2+3k-1)}(0).
    \end{align}
\end{corollary}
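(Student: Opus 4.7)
The plan is to build an explicit map $(M_1, M_2) \mapsto \Sigma$ that places $M_1$ and $M_2$ in canonical positions separated by a rigid ``bridge'' of a fixed number of extra hyperfaces, and then show that the bridge can be uniquely identified inside $\Sigma$ so that $(M_1, M_2)$ can be recovered. First I would use the lexicographic labelling from the previous subsection: in each manifold, the lex-smallest hyperface $f^{(1)}$ serves as a canonical anchor. Translate $M_1$ so that $\bm{f}_1^{(1)}$ sits at a fixed canonical center-coordinate position, and translate $M_2$ far in the $+x_1$ direction until it cannot overlap $M_1$ and $\bm{f}_2^{(1)}$ sits at a canonical offset from $\bm{f}_1^{(1)}$ determined only by $k$ and $d$. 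This makes the joint position of $(M_1, M_2)$ a well-defined function of the input pair.

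Next I would construct the bridge: a specific connected chain of $2(k^2+3k-1)$ hyperfaces that links a canonical boundary hyperedge of $f_1^{(1)}$ to a canonical boundary hyperedge of $f_2^{(1)}$ through the region with $x_1$ strictly smaller than any coordinate attained in $M_1$ or $M_2$. Because $f_1^{(1)}$ and $f_2^{(1)}$ are lex-smallest in their respective manifolds, a ``shelf'' of hyperface positions just below them in $x_1$ is guaranteed to be empty, so the bridge can always be placed without collision. The bridge itself is a rigid template depending only on $k$ and $d$: a short ``cap'' attached to each $f_i^{(1)}$ (roughly $k^2+3k-1$ hyperfaces per side, accounting for the straight pieces and the corner/turn pieces needed to bring an arbitrary face orientation into a canonical attachment orientation) joined by a straight trunk whose length is absorbed into the canonical offset from the first step. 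Every hyperface of the bridge lies at positions strictly lex-smaller than any hyperface of $M_1$ or $M_2$, which is what makes it self-identifying.

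For injectivity, given $\Sigma$ in the image, its lex-smallest hyperface lies in the bridge, and because the bridge is a rigid template I can peel it off deterministically: starting from the lex-smallest face of $\Sigma$, verify the canonical bridge shape and remove those $2(k^2+3k-1)$ hyperfaces; what remains is exactly $M_1 \sqcup M_2$ in canonical positions, and translating each connected component back to the origin recovers the ordered pair. The boundary-component count becomes $h+g$ provided the bridge is designed so that its two endpoint hyperedges each ``plug into'' one boundary hyperedge of $f_i^{(1)}$, turning those two boundary hyperedges into interior hyperedges, while all remaining free hyperedges of the bridge are matched internally within the bridge itself (so the bridge contributes $0$ new boundary components). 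For SOMs and $(d,k)$-XDs the same template works verbatim because the bridge only uses $(k-1)$-edges incident to at most two hyperfaces, so no osculating choices arise and the $(d,k)$-XD rule is trivially satisfied; no step of the construction relies on the self-avoiding restriction beyond local lattice geometry.

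The main obstacle I expect is the topological bookkeeping for the boundary components in the SAM case. One must verify that the bridge's own boundary has exactly two ``open ends'' fusing with $f_1^{(1)}$ and $f_2^{(1)}$ while every other bridge hyperedge is internal, and that this fusion does not accidentally change the boundary-component count of $M_1$ or $M_2$ by closing off a hole or opening a new one. Pinning down the explicit cap shape so that (i) it always fits in the free shelf adjacent to $f_i^{(1)}$ regardless of the local configuration of $M_i$ around $f_i^{(1)}$, (ii) it is rigid enough to be recognized inside $\Sigma$, and (iii) it attaches respecting the additivity $(h,g)\mapsto h+g$, is precisely what forces the constant $2(k^2+3k-1)$; the remaining claims (placement, injectivity, and the extension to SOM and XD) follow once this template is fixed.
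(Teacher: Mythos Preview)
Your high-level strategy is right, but two things break. First, the geometry as written is inconsistent: you separate $M_2$ from $M_1$ in the $+x_1$ direction and then route a fixed-size bridge through the region with $x_1$ smaller than everything in $M_1 \cup M_2$. But $f_2^{(1)}$ now sits above all of $M_1$, so a bridge confined below $M_1$ cannot reach it, and a bridge of length $2(k^2+3k-1)$ cannot span a gap that depends on the size of $M_1$. The fix, which the paper uses, is to link the lexicographically \emph{largest} face of the lower manifold to the lexicographically smallest face of the upper one, placing the upper manifold so the vertical gap is exactly the bridge length; the connector then lives in a slab between the two pieces.

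Second, and this is the real missing idea, your ``plug into a boundary hyperedge'' mechanism cannot produce the additivity $c_n(h)c_m(g) \leq c_{n+m+C}(h+g)$, and in particular fails outright for $h=0$. In a closed SAM every $(k{-}1)$-edge of $f_1^{(1)}$ already carries two faces; attaching a bridge face there gives three and violates self-avoidance. Even for $h \geq 1$, a strip-like bridge has its own nontrivial boundary, and gluing it at two boundary edges generically merges components, yielding $h+g-1$ rather than $h+g$; you flag this as the main obstacle but offer no mechanism to resolve it. The paper's device is to build the connector not as a chain of faces with open ends but as a $\mathbb{Z}_2$-sum of boundaries of $(k{+}1)$-cubes: adding $\partial b$ to $\Sigma$ leaves $\partial\Sigma$ untouched because $\partial\partial b = 0$, so the boundary of the concatenation is literally $\partial\Sigma_1 \sqcup \partial\Sigma_2$. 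Concretely, one stacks $(k{+}1)$-cube boundaries starting on the extremal face (the shared face cancels, the remaining $2k{+}1$ faces push outward), trading one orientation per cube until the two ends match; this works uniformly for every $h$ including $0$, and the buffer-plus-orientation bookkeeping is exactly what produces the constant $2(k^2+3k-1)$.
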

From which the existence of the upper bound follows immediately:
\begin{proof}[Proof of \Nref{thm:existence}]
    By Theorem 1 of \cite{wilker1979extension}, if $c_n$ is exponentially upper bounded and $c_n c_m \leq c_{n+y(m)}$, where $\allowbreak \lim_{m \rightarrow \infty } \allowbreak (1/m)y(m) \allowbreak =1$, and $\mu_n$ is bounded from above, then $\mu$ exists. By \Cref{lem:sam-h-1-lower-bound,lem:xd-num-upper-bound,lem:sam-h-0-lower-bound}, $c_n$ for $\mathrm{SAM}_{(d,k)}(0)$, $\mathrm{SAM}_{(d,k)}$, and $\mathrm{XD}_{(d,k)}$ are exponentially upper bounded. By \Cref{lem:pseudo-superadditivity}, $y(m) = m+2(k^2 + 3k -1)$ for all three restricted surfaces. Therefore $\mu$ exists for $\mathrm{SAM}_{(d,k)}(0)$, $\mathrm{SAM}_{(d,k)}$, $\mathrm{SOM}_{(d,k)}$, and $\mathrm{XD}_{(d,k)}$.
\end{proof}

Now that the existence of the growth constants are proved, the upper bounds for the growth constants, \Cref{thm:sam-som-upper-bound,thm:xd-upper-bound,thm:sam-0-upper-bound} follow immediately from \Cref{lem:som-num-upper-bound,lem:xd-num-upper-bound,lem:closed-som-num-upper-bound}.

Before we start the proof of \Cref{thm:concatenation}, it will be useful to prove the following lemma about the orientation of the topmost (or bottommost) hyperface.
\begin{lemma} \label{lem:f1-not-boundary}
    Consider a $k$-manifold $\Sigma$. If the lexicographically smallest or largest face $\bm{f}^{(1)} \in \Sigma$ is not part of a boundary, then $1 \not \in \mathrm{HalfInt}(\bm{f}^{(1)})$.
\end{lemma}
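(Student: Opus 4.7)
The plan is to argue by contradiction, using only the center-coordinate neighbour calculus from \Cref{sec:center-coordinates} together with the definition of ``part of a boundary.'' I would assume that $\bm{f}^{(1)}$ is the lexicographically smallest $k$-face of $\Sigma$ (the largest case is perfectly symmetric), that it is not part of a boundary component, and yet $1 \in \mathrm{HalfInt}(\bm{f}^{(1)})$. I would then seek a face of $\Sigma$ with strictly smaller first coordinate, which is impossible.

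Since $1 \in \mathrm{HalfInt}(\bm{f}^{(1)})$, the face $\bm{f}^{(1)}$ has a ``downward'' neighbouring $(k-1)$-edge $e_- := \bm{f}^{(1)} - \tfrac{1}{2}\bm{1}$. By hypothesis, $\bm{f}^{(1)}$ is not on a boundary, so every $(k-1)$-edge bounding it — and in particular $e_-$ — must have at least one further $k$-face of $\Sigma$ attached to it (this is true for SAMs, SOMs, and XDs alike, since in each case an edge that is \emph{not} a boundary edge meets at least two $k$-faces). Call such a face $\bm{f}'$. According to the ``trading'' description of neighbours in \Cref{sec:center-coordinates}, any $k$-face sharing the edge $e_-$ with $\bm{f}^{(1)}$ has center coordinates of one of the two forms
\begin{align*}
\bm{f}' = \bm{f}^{(1)} - \bm{1}, \quad \text{or} \quad \bm{f}' = \bm{f}^{(1)} - \tfrac{1}{2}\bm{1} \pm \tfrac{1}{2}\bm{i} \quad \text{for some } i \in \mathrm{Int}(\bm{f}^{(1)}).
\end{align*}
In the first case $f'_1 = f^{(1)}_1 - 1$, and in the second case $f'_1 = f^{(1)}_1 - \tfrac{1}{2}$; in either case $f'_1 < f^{(1)}_1$, which contradicts the lexicographic minimality of $\bm{f}^{(1)}$.

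For the lexicographically largest case I would run the identical argument with the ``upward'' edge $e_+ := \bm{f}^{(1)} + \tfrac{1}{2}\bm{1}$, producing a neighbouring face with first coordinate strictly larger than $f^{(1)}_1$. I do not anticipate a real obstacle here — the whole content of the lemma is that the extremal face cannot have its extremal axis as an orientation direction, because an orientation direction always comes with two sibling $(k-1)$-edges on either side, one of which would have to lead to a face lying strictly beyond the supposed extremum. The only mild subtlety is to remember to enumerate \emph{all} kinds of neighbour across $e_\pm$ (the same-orientation opposite face plus all orientation-trading neighbours), which the center-coordinate formalism already lists exhaustively.
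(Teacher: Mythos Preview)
Your proof is correct and essentially identical to the paper's: both argue by contradiction, take the downward $(k-1)$-edge $\bm{f}^{(1)} - \tfrac{1}{2}\bm{1}$, enumerate all possible neighbouring $k$-faces across it, and observe that each has strictly smaller first coordinate. The only cosmetic difference is that the paper lists the neighbours from the edge's center $\bm{l}$ while you list them from $\bm{f}^{(1)}$, but the enumeration and conclusion are the same.
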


\begin{proof}
    Suppose that $\bm{f}^{(1)}$ is not part of a boundary but $1 \not \in \mathrm{Int}(\bm{f}^{(1)})$, i.e. $1 \in \mathrm{HalfInt}(\bm{f}^{(1)})$. Then $\bm{f}^{(1)}$ is neighbouring a hyperedge at $\bm{l} = \bm{f}^{(1)} - \frac{1}{2}\bm{1}$, which must neighbour another hyperface in $\Sigma$. Now consider all possible places where the neighbouring hyperface could be:
    \begin{itemize}
        \item $\bm{l} + \frac12 \bm{1}$: this is $\bm{f}^{(1)}$.
        \item $\bm{l} - \frac12 \bm{1}$: this face has lexicographically smaller coordinates than $\bm{f}^{(1)}$.
        \item $\bm{l} \pm \frac12 \bm{i}$ $\forall i \in \mathrm{Int}(\bm{f}_1)$: again, this is face has coordinates lexicographically smaller than $\bm{f}^{(1)}$.
    \end{itemize}
    Therefore, this is a contradiction. Therefore, we must have that $1 \in \mathrm{Int}(\bm{f}^{(1)})$. Similar arguments hold if $\bm{f}^{(1)}$ is the lexicographically highest face.
\end{proof}

\begin{corollary}
    Consider a $k$-manifold $\Sigma$. If the lexicographically smallest or largest face $\bm{f}^{(1)} \in \Sigma$ has $1 \in \mathrm{HalfInt}(\bm{f}^{(1)})$, then $\bm{f}^{(1)}$ is part of a boundary.
\end{corollary}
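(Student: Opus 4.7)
The plan is to observe that this corollary is the direct contrapositive of \Cref{lem:f1-not-boundary} and therefore requires no additional argument beyond pointing to that logical equivalence. Concretely, \Cref{lem:f1-not-boundary} states that if $\bm{f}^{(1)}$ is not on a boundary, then $1 \notin \mathrm{HalfInt}(\bm{f}^{(1)})$. Its contrapositive reads: if $1 \in \mathrm{HalfInt}(\bm{f}^{(1)})$, then $\bm{f}^{(1)}$ is on a boundary, which is exactly the statement of the corollary.

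The write-up will be one line invoking the contrapositive. Since there is no further content to prove, there is no genuine obstacle; the only small care is to note that the hypothesis applies uniformly whether $\bm{f}^{(1)}$ is taken to be the lexicographically smallest \emph{or} largest face, because \Cref{lem:f1-not-boundary} already covers both cases by the symmetric argument indicated in its proof (reversing the roles of ``lower'' and ``higher'' in the case analysis on $\bm{l} \pm \tfrac12 \bm{1}$ and $\bm{l} \pm \tfrac12 \bm{i}$).

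Thus the proof is simply: \emph{This is the contrapositive of \Cref{lem:f1-not-boundary}.}
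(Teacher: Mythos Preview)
Your proposal is correct and essentially identical to the paper's own proof: the paper argues by contradiction (assume $1 \in \mathrm{HalfInt}(\bm{f}^{(1)})$ and $\bm{f}^{(1)}$ not on a boundary, then invoke \Cref{lem:f1-not-boundary} to obtain a contradiction), which is just the contrapositive restated. There is no substantive difference.
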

\begin{proof}
    Suppose that $1 \in \mathrm{HalfInt}(\bm{f}^{(1)})$, but $\bm{f}^{(1)}$ is not part of a boundary. By \Cref{lem:f1-not-boundary}, it must be part of a boundary. Therefore it is a contradiction.
\end{proof}

\begin{corollary}
    If a $k$-manifold $\Sigma$ is closed, then both the lexicographically smallest or largest face $f^{(1)}$ have $1 \in \mathrm{Int}(\bm{f}^{(1)})$.
\end{corollary}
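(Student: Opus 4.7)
The plan is to derive this as an immediate consequence of \Cref{lem:f1-not-boundary}. A closed $k$-manifold has, by definition, no boundary components, so no $k$-face in $\Sigma$ is part of a boundary. In particular, the lexicographically smallest face and the lexicographically largest face are both non-boundary faces.

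Applying \Cref{lem:f1-not-boundary} to $\bm{f}^{(1)}$ taken to be the lexicographically smallest face, the hypothesis (not part of a boundary) is satisfied, so we conclude $1 \notin \mathrm{HalfInt}(\bm{f}^{(1)})$. By the defining property that each center coordinate is either an integer or a half-integer, $1 \notin \mathrm{HalfInt}(\bm{f}^{(1)})$ is equivalent to $1 \in \mathrm{Int}(\bm{f}^{(1)})$. Repeating the argument with $\bm{f}^{(1)}$ taken to be the lexicographically largest face (which is the case covered by the ``or'' in \Cref{lem:f1-not-boundary}) yields the same conclusion there.

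No additional work is needed; the only ``obstacle'' is simply pointing out that the closedness hypothesis rules out the exceptional case allowed in \Cref{lem:f1-not-boundary}, namely that $\bm{f}^{(1)}$ lies on a boundary. Once that is noted, the result follows in one line from the lemma.
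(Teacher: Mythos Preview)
Your proposal is correct and takes essentially the same approach as the paper: the corollary is stated without proof immediately after \Cref{lem:f1-not-boundary}, and is meant to follow exactly as you describe---closedness ensures the extremal face is not on a boundary, so the lemma gives $1 \notin \mathrm{HalfInt}(\bm{f}^{(1)})$, i.e.\ $1 \in \mathrm{Int}(\bm{f}^{(1)})$.
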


We now prove the concatenation theorem.

\begin{proof}[Proof of \Nref{thm:concatenation}]
    Consider two manifolds $\Sigma_\mathrm{top}$ and $\Sigma_{\mathrm{bot}}$, made up of $n$ and $m$ hyperfaces, respectively. We will describe a method to concatenate these two manifolds. Throughout this proof, a hyperface will refer to a $k$-face, a hyperedge will refer to a $(k-1)$-edge, and a hypercube will refer to a $(k+1)$-cube. When we refer to an object being `above' or `below' another, this means that it is higher or lower in the $1$st dimension, respectively. 

    Our strategy will be to `add' the boundaries of hypercubes on the top of $\Sigma_{\mathrm{bot}}$ and the bottom of $\Sigma_\mathrm{top}$ to join them together. We define the addition operation to removes a hyperface if there is already a hyperface there, and adds one if there isn't~\footnote{This can be thought of as representing the manifold as a vector in a $\mathbb{Z}_2$-valued vector space spanned by orthonormal basis vectors corresponding to the faces whose coefficient is $0$ if the hyperface is not present and $1$ if there is one.}. Since each boundary of a hypercube does not have any boundaries, this operation does not add any new boundary components nor combine existing boundaries, and therefore should result in a new manifold where the number of boundary components add.

    Recall that two $k$-faces $f$ and $f'$ can only be connected if they share at least $k-1$ orientations (or half-integer dimensions), i.e. $\abs{\mathrm{HalfInt}(\bm{f}) \cap \mathrm{HalfInt}(\bm{f}')} = k-1$, or equivalently, if their orientations differ by 1, i.e. $\abs{\mathrm{HalfInt}(\bm{f}) \backslash \mathrm{HalfInt}(\bm{f}')} = \abs{\mathrm{HalfInt}(\bm{f}') \backslash \mathrm{HalfInt}(\bm{f})} = 1$. 
    $(k+1)$-cubes can neighbour each other if they share $k$ orientations.
    
    In general, the orientations of $\bm{f}^{(m)}_\mathrm{bot}$ and $\bm{f}^{(1)}_\mathrm{top}$ will not be compatible to concatenate. Let the new orientations (or half-integer dimensions) that need to be added to connect $f_\mathrm{bot}^{(m)}$ to $f_\mathrm{top}^{(1)}$ be $\mathrm{Buy} := \mathrm{HalfInt}(\bm{f}^{(1)}_{\mathrm{top}}) \backslash \mathrm{HalfInt}(\bm{f}^{(m)}_{\mathrm{bot}})$, and the orientations to be be thrown away be $\mathrm{Sell} := \mathrm{HalfInt}(\bm{f}^{(m)}_{\mathrm{bot}}) \backslash \mathrm{HalfInt}(\bm{f}^{(1)}_{\mathrm{top}})$.

    The worst case scenario is if there is no overlap between the orientations of $f_\mathrm{bot}^{(m)}$ and $f_\mathrm{top}^{(1)}$, i.e. $\mathrm{HalfInt}(f_\mathrm{bot}^{(m)}) \cap \mathrm{HalfInt}(f_\mathrm{top}^{(1)}) = \varnothing$. In this case, $\abs{\mathrm{Buy}} = k$ and $\abs{\mathrm{Sell}}=k$~\footnote{The construction could be improved for the case where $k > \lceil d/2 \rceil$, where even in the worst case, the two faces would share orientations. However, since our procedure still works in this case and is enough to prove the pseudo-subadditivity and therefore existence of the growth constant subsequently, we do not attempt a more `efficient' proof.}.

    In the first step, we add `buffer' faces such that there is no risk that subsequently added faces will interact (i.e. intersect or neighbour) with existing faces of $\Sigma_\mathrm{bot}$ or $\Sigma_\mathrm{top}$ except $\bm{f}^{(m)}_\mathrm{bot}$ or $\bm{f}^{(1)}_\mathrm{top}$, respectively.

    For $\Sigma_\mathrm{bot}$, consider its topmost hyperface, $f^{(m)}_\mathrm{bot}$. 
    \begin{itemize}
        \item If $1 \not \in \mathrm{HalfInt}(\bm{f}^{(1)}_{\mathrm{top}})$, add the boundaries of two hypercube above $\bm{f}^{(m)}_\mathrm{bot}$ it at $\bm{b}'_0 = \bm{f}^{(m)}_{\mathrm{bot}} + \frac12 \bm{1}$ and $\bm{b}_0 = \bm{b}'_0 + \bm{1}$. Recall that the boundary of $(k+1)$-cube has $(2k+2)$ of $k$-faces. Whenever a hyperface is `added' where there is a already one (i.e. they overlap), both the existing hyperface and the new hyperface are removed instead. Therefore whenever the boundary of a neighbouring hypercube (consisting of $2k+2$ hyperfaces) is added, it net contributes $2k+2-1-1=2k$ faces to the manifold. Therefore adding the two hypercubes contribute $4k$ hyperfaces. 
        After this step, it is guaranteed that adding any hypercubes at the same height in the $1$st dimension but neighbouring in others does not intersect with any of the existing hyperfaces in $\Sigma_\mathrm{bot}$, since the boundaries of such hypercubes would have the first coordinate greater than $[\bm{f}^{(m)}_\mathrm{bot}]_1 + 1$.         
        \item If $1 \in \mathrm{HalfInt}(\bm{f}^{(m)}_{\mathrm{bot}})$, first add $2k-1$ hyperfaces at $\bm{f}_{\mathrm{bot}}(\tau) = \bm{f}^{(m)}_{\mathrm{bot}} + \tau \bm{1}$, where $\tau = 1, 2, \dots, 2k-1$. Then,
        \begin{itemize}
            \item if $\mathrm{Buy}, \mathrm{Sell} \neq \varnothing$, add a hyperface to $\bm{f}_\mathrm{bot}(2k) = \bm{f}_{\mathrm{bot}}(2k-1) + \frac12 \bm{1} + \bm{u}$, where $\bm{u}$ is first dimension of $\mathrm{Buy}$ when ordered. Remove $\bm{u}$ from both $\mathrm{Buy}$ and $\mathrm{Sell}$.
            \item If $\mathrm{Buy}, \mathrm{Sell} = \varnothing$, add a hyperface to $\bm{f}_\mathrm{bot}(2k) = \bm{f}_{\mathrm{bot}}(2k-1) + \frac12 \bm{1} - \frac12 \bm{i}_\mathrm{min}$, where $\bm{i}_\mathrm{min}$ is the smallest dimension in $\mathrm{Int}(\bm{f}^{(m)}_{\mathrm{bot}})$. In this case, $1 \in \mathrm{HalfInt}(\bm{f}^{(1)}_\mathrm{top})$ as well.
        \end{itemize}
        Since $\bm{f}_{\mathrm{bot}}(2k)$ has $1$ as an orientation, i.e. $1 \in \mathrm{HalfInt}(\bm{f}_{\mathrm{bot}}(2k))$, we can and do add the boundary of a hypercube at $\bm{b}_0 = \bm{f}_{\mathrm{bot}}(2k) + \bm{1}$. Overall, this procedure adds $4k$ hyperfaces to $\Sigma_\mathrm{bot}$. Since $k \geq 2$ and we added $\geq 3$ hyperfaces above $\bm{f}^{(1)}_\mathrm{bot}$ which has $1$ as an orientation, boundaries of such hypercubes is guaranteed to have the first coordinate greater than $[\bm{f}^{(m)}_\mathrm{bot}]_1 + 3.5$. 

    \end{itemize}
    Similarly, for $\Sigma_\mathrm{top}$, we look at its bottom face $f^{(1)}_\mathrm{top}$. We add two hypercubes below it if $1 \not \in \mathrm{HalfInt}(\bm{f}^{(m)}_\mathrm{bot})$, and if $1 \in \mathrm{HalfInt}(\bm{f}^{(m)}_\mathrm{bot})$, add $4k+2$ hyperfaces below it in a similar manner. Note that for the case where $\mathrm{Buy}, \mathrm{Sell} \neq \varnothing$, $\bm{f}_\mathrm{top}(2k+2) = \bm{f}_\mathrm{top}(2k+1) - \frac12 \bm{1} - \bm s$, where $\bm s$ is the first index of $\mathrm{Sell}$ when ordered, then remove it from both from both $\mathrm{Buy}$ and $\mathrm{Sell}$. Note also that in the case where $\mathrm{Buy}, \mathrm{Sell} = \varnothing$, $\bm{i}_\mathrm{min}$ is the same. We will refer to the final hypercube from as $\bm{b}_{-1}$.

    As $4k$ hyperfaces are added on the top and on the bottom components, $8k$ hyperfaces are added in this step.

    At this point, $\bm{b}_0$ and $\bm{b}_{-1}$ always share $1$ as one of its orientations, but could differ on all other orientations, i.e. $k$ orientations. In the second step, we add hypercubes from the bottom and change orientations until the bottom and top components become compatible to concatenate.
    
    For $t=1,2, \dots, (k-1)$, if $t \leq \abs{\mathrm{Buy}} - 1 = \abs{\mathrm{Sell}} - 1$, we add hypercubes at $\bm{b}_t = \bm{b}_{t-1} + \frac12 \bm{u}_t - \frac12 \bm{s}_t$, where $\bm{u}_t$ and $\bm{s}_t$ is the $t$\textsuperscript{th} element of $\mathrm{Buy}$ and $\mathrm{Sell}$ respectively. If $t > \abs{\mathrm{Buy}} = \abs{\mathrm{Sell}}$, we add hypercubes at $\bm{b}_t = \bm{b}_{t-1} + \bm{1}$.

    The second step adds $(k-1)$ $(k+1)$-cubes, and therefore adds $(k-1) \times 2k$ $k$-faces.

    Finally, we translate the top component such that $\bm{b}_{-1}$ is moved to where $\bm{b}_k$ would be with the rules above. This will remove $2$ $k$-faces.

    This completes the concatenation procedure, which added a net of $8k + (k-1) \times 2k - 2 = 2(k^2+3k-1)$ faces. It is schematically illustrated in \Cref{fig:concatenation}.

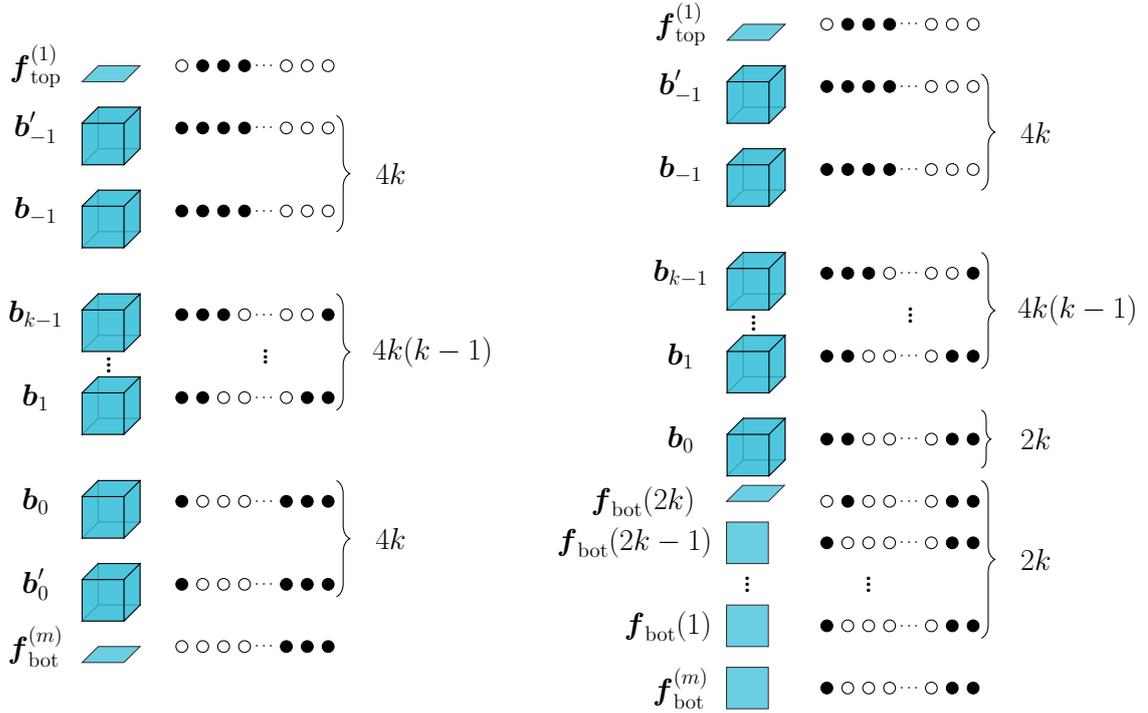
\begin{figure}[h]
    \centering
    
    \begin{subfigure}[c]{0.45\textwidth}
    \centering
    \begin{tikzpicture}[scale = 0.55]
    \begin{scope}[shift={(0,2)}, transform shape]
    \draw[fill=SkyBlue, opacity=0.8] 
    (0,0,0) -- (1,0,0) -- (1,0,1) -- (0,0,1) -- cycle;    
    \end{scope}

    \begin{scope}[shift={(2,2)}, transform shape]
    \node at (-3.5,0) {\huge $\bm{f}^{(1)}_\mathrm{top}$};
    \node[circle, draw, minimum size=8, inner sep=0] (C1) at (0,0) {};
    \node[circle, draw, fill = black, minimum size=8, inner sep=0pt] (C2) at (0.5,0) {};
    \node[circle, draw, fill = black, minimum size=8, inner sep=0pt] (C3) at (1.0,0) {};
    \node[circle, draw, fill = black, minimum size=8, inner sep=0] (C4) at (1.5,0) {};
    \node at (2,0) {$\cdots$};
    \node[circle, draw, minimum size=8, inner sep=0] (C5) at (2.5,0) {};
    \node[circle, draw, minimum size=8, inner sep=0] (C6) at (3,0) {};
    \node[circle, draw, minimum size=8, inner sep=0] (C7) at (3.5,0) {};
    \end{scope}

    \begin{scope}[shift={(0,0)}]
    
    \draw[fill=SkyBlue, opacity=0.8]
    (0,0,0) -- (0,1,0) -- (0,1,1) -- (0,0,1) -- cycle;
    
    \draw[fill=SkyBlue, opacity=0.8]
    (0,0,0) -- (1,0,0) -- (1,1,0) -- (0,1,0) -- cycle;
    \draw[fill=SkyBlue, opacity=0.8]
    (0,0,0) -- (1,0,0) -- (1,0,1) -- (0,0,1) -- cycle;
    
    \draw[fill=SkyBlue, opacity=0.8]
    (1,0,0) -- (1,1,0) -- (1,1,1) -- (1,0,1) -- cycle;
    
    \draw[fill=SkyBlue, opacity=0.8]
    (0,0,1) -- (1,0,1) -- (1,1,1) -- (0,1,1) -- cycle;
    
    \draw[fill=SkyBlue, opacity=0.8]
    (0,1,0) -- (1,1,0) -- (1,1,1) -- (0,1,1) -- cycle;
    \end{scope}

    \begin{scope}[shift={(2,0.5)}, transform shape]
    \node at (-3.5,0) {\huge $\bm{b}'_{-1}$};
    \node[circle, draw,  fill = black, minimum size=8, inner sep=0] (C1) at (0,0) {};
    \node[circle, draw,  fill = black, minimum size=8, inner sep=0pt] (C2) at (0.5,0) {};
    \node[circle, draw, fill = black, minimum size=8, inner sep=0pt] (C3) at (1.0,0) {};
    \node[circle, draw, fill = black, minimum size=8, inner sep=0] (C4) at (1.5,0) {};
    \node at (2,0) {$\cdots$};
    \node[circle, draw, minimum size=8, inner sep=0] (C5) at (2.5,0) {};
    \node[circle, draw, minimum size=8, inner sep=0] (C6) at (3,0) {};
    \node[circle, draw, minimum size=8, inner sep=0] (C7) at (3.5,0) {};
    \draw[decorate, decoration={brace, amplitude=5pt, mirror}] 
    (3.7,-2.5) -- (3.7,0.3) node[midway, right=15pt]{};
    \node at (5,-1.1) {\huge $4k$};
    
    \end{scope}
    
    \begin{scope}[shift={(0,-2)}]
    
    \draw[fill=SkyBlue, opacity=0.8]
    (0,0,0) -- (0,1,0) -- (0,1,1) -- (0,0,1) -- cycle;
    
    \draw[fill=SkyBlue, opacity=0.8]
    (0,0,0) -- (1,0,0) -- (1,1,0) -- (0,1,0) -- cycle;
    \draw[fill=SkyBlue, opacity=0.8]
    (0,0,0) -- (1,0,0) -- (1,0,1) -- (0,0,1) -- cycle;
    
    \draw[fill=SkyBlue, opacity=0.8]
    (1,0,0) -- (1,1,0) -- (1,1,1) -- (1,0,1) -- cycle;
    
    \draw[fill=SkyBlue, opacity=0.8]
    (0,0,1) -- (1,0,1) -- (1,1,1) -- (0,1,1) -- cycle;
    
    \draw[fill=SkyBlue, opacity=0.8]
    (0,1,0) -- (1,1,0) -- (1,1,1) -- (0,1,1) -- cycle;
    \end{scope}

    \begin{scope}[shift={(2,-1.5)}, transform shape]
    \node at (-3.5,0) {\huge $\bm{b}_{-1}$};
    \node[circle, draw,  fill = black, minimum size=8, inner sep=0] (C1) at (0,0) {};
    \node[circle, draw,  fill = black, minimum size=8, inner sep=0pt] (C2) at (0.5,0) {};
    \node[circle, draw, fill = black, minimum size=8, inner sep=0pt] (C3) at (1.0,0) {};
    \node[circle, draw, fill = black, minimum size=8, inner sep=0] (C4) at (1.5,0) {};
    \node at (2,0) {$\cdots$};
    \node[circle, draw, minimum size=8, inner sep=0] (C5) at (2.5,0) {};
    \node[circle, draw, minimum size=8, inner sep=0] (C6) at (3,0) {};
    \node[circle, draw, minimum size=8, inner sep=0] (C7) at (3.5,0) {};
    \end{scope}
    
    \begin{scope}[shift={(0,-4.5)}]
    
    \draw[fill=SkyBlue, opacity=0.8]
    (0,0,0) -- (0,1,0) -- (0,1,1) -- (0,0,1) -- cycle;
    
    \draw[fill=SkyBlue, opacity=0.8]
    (0,0,0) -- (1,0,0) -- (1,1,0) -- (0,1,0) -- cycle;
    \draw[fill=SkyBlue, opacity=0.8]
    (0,0,0) -- (1,0,0) -- (1,0,1) -- (0,0,1) -- cycle;
    
    \draw[fill=SkyBlue, opacity=0.8]
    (1,0,0) -- (1,1,0) -- (1,1,1) -- (1,0,1) -- cycle;
    
    \draw[fill=SkyBlue, opacity=0.8]
    (0,0,1) -- (1,0,1) -- (1,1,1) -- (0,1,1) -- cycle;
    
    \draw[fill=SkyBlue, opacity=0.8]
    (0,1,0) -- (1,1,0) -- (1,1,1) -- (0,1,1) -- cycle;
    \end{scope}

    \begin{scope}[shift={(2,-4)}, transform shape]
    \node at (-3.5,0) {\huge $\bm{b}_{k-1}$};
    \node[circle, draw,  fill = black, minimum size=8, inner sep=0] (C1) at (0,0) {};
    \node[circle, draw,  fill = black, minimum size=8, inner sep=0pt] (C2) at (0.5,0) {};
    \node[circle, draw, fill = black, minimum size=8, inner sep=0pt] (C3) at (1.0,0) {};
    \node[circle, draw, minimum size=8, inner sep=0] (C4) at (1.5,0) {};
    \node at (2,0) {$\cdots$};
    \node[circle, draw, minimum size=8, inner sep=0] (C5) at (2.5,0) {};
    \node[circle, draw, minimum size=8, inner sep=0] (C6) at (3,0) {};
    \node[circle, draw, fill = black, minimum size=8, inner sep=0] (C7) at (3.5,0) {};
    
    \end{scope}

    \begin{scope}[shift={(0,-6.5)}]
    
    \draw[fill=SkyBlue, opacity=0.8]
    (0,0,0) -- (0,1,0) -- (0,1,1) -- (0,0,1) -- cycle;
    
    \draw[fill=SkyBlue, opacity=0.8]
    (0,0,0) -- (1,0,0) -- (1,1,0) -- (0,1,0) -- cycle;
    \draw[fill=SkyBlue, opacity=0.8]
    (0,0,0) -- (1,0,0) -- (1,0,1) -- (0,0,1) -- cycle;
    
    \draw[fill=SkyBlue, opacity=0.8]
    (1,0,0) -- (1,1,0) -- (1,1,1) -- (1,0,1) -- cycle;
    
    \draw[fill=SkyBlue, opacity=0.8]
    (0,0,1) -- (1,0,1) -- (1,1,1) -- (0,1,1) -- cycle;
    
    \draw[fill=SkyBlue, opacity=0.8]
    (0,1,0) -- (1,1,0) -- (1,1,1) -- (0,1,1) -- cycle;
    \end{scope}

    \begin{scope}[shift={(2,-6)}, transform shape]
    \node at (-3.5,0) {\huge $\bm{b}_{1}$};
    \node[circle, draw,  fill = black, minimum size=8, inner sep=0] (C1) at (0,0) {};
    \node[circle, draw,  fill = black, minimum size=8, inner sep=0pt] (C2) at (0.5,0) {};
    \node[circle, draw, minimum size=8, inner sep=0pt] (C3) at (1.0,0) {};
    \node[circle, draw, minimum size=8, inner sep=0] (C4) at (1.5,0) {};
    \node at (2,0) {$\cdots$};
    \node[circle, draw, minimum size=8, inner sep=0] (C5) at (2.5,0) {};
    \node[circle, draw, fill = black, minimum size=8, inner sep=0] (C6) at (3,0) {};
    \node[circle, draw, fill = black, minimum size=8, inner sep=0] (C7) at (3.5,0) {};
    \draw[decorate, decoration={brace, amplitude=5pt, mirror}] (3.7,-0.3) -- (3.7,2.5) node[midway, right=2pt]{};
    \node at (6,1.1) {\huge $4k(k-1)$};
    \node at (2,1.1) {\huge $\vdots$};
    \node at (-1.75,0.9) {\huge $\vdots$};
    \end{scope}
    
    \begin{scope}[shift={(0,-9)}]
    
    \draw[fill=SkyBlue, opacity=0.8]
    (0,0,0) -- (0,1,0) -- (0,1,1) -- (0,0,1) -- cycle;
    
    \draw[fill=SkyBlue, opacity=0.8]
    (0,0,0) -- (1,0,0) -- (1,1,0) -- (0,1,0) -- cycle;
    \draw[fill=SkyBlue, opacity=0.8]
    (0,0,0) -- (1,0,0) -- (1,0,1) -- (0,0,1) -- cycle;
    
    \draw[fill=SkyBlue, opacity=0.8]
    (1,0,0) -- (1,1,0) -- (1,1,1) -- (1,0,1) -- cycle;
    
    \draw[fill=SkyBlue, opacity=0.8]
    (0,0,1) -- (1,0,1) -- (1,1,1) -- (0,1,1) -- cycle;
    
    \draw[fill=SkyBlue, opacity=0.8]
    (0,1,0) -- (1,1,0) -- (1,1,1) -- (0,1,1) -- cycle;
    \end{scope}

    \begin{scope}[shift={(2,-8.5)}, transform shape]
    \node at (-3.5,0) {\huge $\bm{b}_{0}$};
    \node[circle, draw,  fill = black, minimum size=8, inner sep=0] (C1) at (0,0) {};
    \node[circle, draw, minimum size=8, inner sep=0pt] (C2) at (0.5,0) {};
    \node[circle, draw, minimum size=8, inner sep=0pt] (C3) at (1.0,0) {};
    \node[circle, draw, minimum size=8, inner sep=0] (C4) at (1.5,0) {};
    \node at (2,0) {$\cdots$};
    \node[circle, draw, fill = black, minimum size=8, inner sep=0] (C5) at (2.5,0) {};
    \node[circle, draw, fill = black, minimum size=8, inner sep=0] (C6) at (3,0) {};
    \node[circle, draw, fill = black, minimum size=8, inner sep=0] (C7) at (3.5,0) {};
    \end{scope}
    
    \begin{scope}[shift={(0,-11)}, transform shape]
    
    \draw[fill=SkyBlue, opacity=0.8]
    (0,0,0) -- (0,1,0) -- (0,1,1) -- (0,0,1) -- cycle;
    
    \draw[fill=SkyBlue, opacity=0.8]
    (0,0,0) -- (1,0,0) -- (1,1,0) -- (0,1,0) -- cycle;
    \draw[fill=SkyBlue, opacity=0.8]
    (0,0,0) -- (1,0,0) -- (1,0,1) -- (0,0,1) -- cycle;
    
    \draw[fill=SkyBlue, opacity=0.8]
    (1,0,0) -- (1,1,0) -- (1,1,1) -- (1,0,1) -- cycle;
    
    \draw[fill=SkyBlue, opacity=0.8]
    (0,0,1) -- (1,0,1) -- (1,1,1) -- (0,1,1) -- cycle;
    
    \draw[fill=SkyBlue, opacity=0.8]
    (0,1,0) -- (1,1,0) -- (1,1,1) -- (0,1,1) -- cycle;
    \end{scope}
    
    \begin{scope}[shift={(2,-10.5)}, transform shape]
    \node at (-3.5,0) {\huge $\bm{b}'_{0}$};
    \node[circle, draw,  fill = black, minimum size=8, inner sep=0] (C1) at (0,0) {};
    \node[circle, draw, minimum size=8, inner sep=0pt] (C2) at (0.5,0) {};
    \node[circle, draw, minimum size=8, inner sep=0pt] (C3) at (1.0,0) {};
    \node[circle, draw, minimum size=8, inner sep=0] (C4) at (1.5,0) {};
    \node at (2,0) {$\cdots$};
    \node[circle, draw, fill = black, minimum size=8, inner sep=0] (C5) at (2.5,0) {};
    \node[circle, draw, fill = black, minimum size=8, inner sep=0] (C6) at (3,0) {};
    \node[circle, draw, fill = black, minimum size=8, inner sep=0] (C7) at (3.5,0) {};
    \draw[decorate, decoration={brace, amplitude=5pt, mirror}] 
    (3.7,-0.3) -- (3.7,2.5) node[midway, right=15pt]{};
    \node at (5,1.1) {\huge $4k$};
    \end{scope}
    
    \begin{scope}[shift={(0,-12)}, transform shape]
    \draw[fill=SkyBlue, opacity=0.8] 
    (0,0,0) -- (1,0,0) -- (1,0,1) -- (0,0,1) -- cycle;
    \end{scope}

    
    
    
    
    
    \begin{scope}[shift={(2,-12)}, transform shape]
    \node at (-3.5,0) {\huge $\bm{f}^{(m)}_\mathrm{bot}$};
    \node[circle, draw, minimum size=8, inner sep=0] (C1) at (0,0) {};
    \node[circle, draw, minimum size=8, inner sep=0pt] (C2) at (0.5,0) {};
    \node[circle, draw, minimum size=8, inner sep=0pt] (C3) at (1.0,0) {};
    \node[circle, draw, minimum size=8, inner sep=0] (C4) at (1.5,0) {};
    \node at (2,0) {$\cdots$};
    \node[circle, draw, fill = black, minimum size=8, inner sep=0] (C5) at (2.5,0) {};
    \node[circle, draw, fill = black, minimum size=8, inner sep=0] (C6) at (3,0) {};
    \node[circle, draw, fill = black, minimum size=8, inner sep=0] (C7) at (3.5,0) {};
    \end{scope}
    \end{tikzpicture}
    \end{subfigure}
    \hspace{0.5em} 
    \begin{subfigure}[c]{0.45\textwidth}
    \centering
    \begin{tikzpicture}[scale = 0.55]
    \begin{scope}[shift={(0,4)}, transform shape]
    \draw[fill=SkyBlue, opacity=0.8] 
    (0,0,0) -- (1,0,0) -- (1,0,1) -- (0,0,1) -- cycle;    
    \end{scope}

    \begin{scope}[shift={(2,4)}, transform shape]
    \node at (-3.5,0) {\huge $\bm{f}^{(1)}_\mathrm{top}$};
    \node[circle, draw, minimum size=8, inner sep=0] (C1) at (0,0) {};
    \node[circle, draw, fill = black, minimum size=8, inner sep=0pt] (C2) at (0.5,0) {};
    \node[circle, draw, fill = black, minimum size=8, inner sep=0pt] (C3) at (1.0,0) {};
    \node[circle, draw, fill = black, minimum size=8, inner sep=0] (C4) at (1.5,0) {};
    \node at (2,0) {$\cdots$};
    \node[circle, draw, minimum size=8, inner sep=0] (C5) at (2.5,0) {};
    \node[circle, draw, minimum size=8, inner sep=0] (C6) at (3,0) {};
    \node[circle, draw, minimum size=8, inner sep=0] (C7) at (3.5,0) {};
    \end{scope}

    \begin{scope}[shift={(0,2)}]
    
    \draw[fill=SkyBlue, opacity=0.8]
    (0,0,0) -- (0,1,0) -- (0,1,1) -- (0,0,1) -- cycle;
    
    \draw[fill=SkyBlue, opacity=0.8]
    (0,0,0) -- (1,0,0) -- (1,1,0) -- (0,1,0) -- cycle;
    \draw[fill=SkyBlue, opacity=0.8]
    (0,0,0) -- (1,0,0) -- (1,0,1) -- (0,0,1) -- cycle;
    
    \draw[fill=SkyBlue, opacity=0.8]
    (1,0,0) -- (1,1,0) -- (1,1,1) -- (1,0,1) -- cycle;
    
    \draw[fill=SkyBlue, opacity=0.8]
    (0,0,1) -- (1,0,1) -- (1,1,1) -- (0,1,1) -- cycle;
    
    \draw[fill=SkyBlue, opacity=0.8]
    (0,1,0) -- (1,1,0) -- (1,1,1) -- (0,1,1) -- cycle;
    \end{scope}

    \begin{scope}[shift={(2,2.5)}, transform shape]
    \node at (-3.5,0) {\huge $\bm{b}'_{-1}$};
    \node[circle, draw, fill = black, minimum size=8, inner sep=0] (C1) at (0,0) {};
    \node[circle, draw, fill = black, minimum size=8, inner sep=0pt] (C2) at (0.5,0) {};
    \node[circle, draw, fill = black, minimum size=8, inner sep=0pt] (C3) at (1.0,0) {};
    \node[circle, draw, fill = black, minimum size=8, inner sep=0] (C4) at (1.5,0) {};
    \node at (2,0) {$\cdots$};
    \node[circle, draw, minimum size=8, inner sep=0] (C5) at (2.5,0) {};
    \node[circle, draw, minimum size=8, inner sep=0] (C6) at (3,0) {};
    \node[circle, draw, minimum size=8, inner sep=0] (C7) at (3.5,0) {};
    \draw[decorate, decoration={brace, amplitude=5pt, mirror}] 
    (3.7,-2.5) -- (3.7,0.3) node[midway, right=15pt]{};
    \node at (5,-1.1) {\huge $4k$};
    
    \end{scope}
    
    \begin{scope}[shift={(0,0)}]
    
    \draw[fill=SkyBlue, opacity=0.8]
    (0,0,0) -- (0,1,0) -- (0,1,1) -- (0,0,1) -- cycle;
    
    \draw[fill=SkyBlue, opacity=0.8]
    (0,0,0) -- (1,0,0) -- (1,1,0) -- (0,1,0) -- cycle;
    \draw[fill=SkyBlue, opacity=0.8]
    (0,0,0) -- (1,0,0) -- (1,0,1) -- (0,0,1) -- cycle;
    
    \draw[fill=SkyBlue, opacity=0.8]
    (1,0,0) -- (1,1,0) -- (1,1,1) -- (1,0,1) -- cycle;
    
    \draw[fill=SkyBlue, opacity=0.8]
    (0,0,1) -- (1,0,1) -- (1,1,1) -- (0,1,1) -- cycle;
    
    \draw[fill=SkyBlue, opacity=0.8]
    (0,1,0) -- (1,1,0) -- (1,1,1) -- (0,1,1) -- cycle;
    \end{scope}

    \begin{scope}[shift={(2,0.5)}, transform shape]
    \node at (-3.5,0) {\huge $\bm{b}_{-1}$};
    \node[circle, draw, fill = black, minimum size=8, inner sep=0] (C1) at (0,0) {};
    \node[circle, draw, fill = black, minimum size=8, inner sep=0pt] (C2) at (0.5,0) {};
    \node[circle, draw, fill = black, minimum size=8, inner sep=0pt] (C3) at (1.0,0) {};
    \node[circle, draw, fill = black, minimum size=8, inner sep=0] (C4) at (1.5,0) {};
    \node at (2,0) {$\cdots$};
    \node[circle, draw, minimum size=8, inner sep=0] (C5) at (2.5,0) {};
    \node[circle, draw, minimum size=8, inner sep=0] (C6) at (3,0) {};
    \node[circle, draw, minimum size=8, inner sep=0] (C7) at (3.5,0) {};
    \end{scope}
    
    \begin{scope}[shift={(0,-2.5)}]
    
    \draw[fill=SkyBlue, opacity=0.8]
    (0,0,0) -- (0,1,0) -- (0,1,1) -- (0,0,1) -- cycle;
    
    \draw[fill=SkyBlue, opacity=0.8]
    (0,0,0) -- (1,0,0) -- (1,1,0) -- (0,1,0) -- cycle;
    \draw[fill=SkyBlue, opacity=0.8]
    (0,0,0) -- (1,0,0) -- (1,0,1) -- (0,0,1) -- cycle;
    
    \draw[fill=SkyBlue, opacity=0.8]
    (1,0,0) -- (1,1,0) -- (1,1,1) -- (1,0,1) -- cycle;
    
    \draw[fill=SkyBlue, opacity=0.8]
    (0,0,1) -- (1,0,1) -- (1,1,1) -- (0,1,1) -- cycle;
    
    \draw[fill=SkyBlue, opacity=0.8]
    (0,1,0) -- (1,1,0) -- (1,1,1) -- (0,1,1) -- cycle;
    \end{scope}

    \begin{scope}[shift={(2,-2)}, transform shape]
    \node at (-3.5,0) {\huge $\bm{b}_{k-1}$};
    \node[circle, draw, fill = black, minimum size=8, inner sep=0] (C1) at (0,0) {};
    \node[circle, draw, fill = black, minimum size=8, inner sep=0pt] (C2) at (0.5,0) {};
    \node[circle, draw, fill = black, minimum size=8, inner sep=0pt] (C3) at (1.0,0) {};
    \node[circle, draw, minimum size=8, inner sep=0] (C4) at (1.5,0) {};
    \node at (2,0) {$\cdots$};
    \node[circle, draw, minimum size=8, inner sep=0] (C5) at (2.5,0) {};
    \node[circle, draw, minimum size=8, inner sep=0] (C6) at (3,0) {};
    \node[circle, draw, fill = black, minimum size=8, inner sep=0] (C7) at (3.5,0) {};
    
    \end{scope}

    \begin{scope}[shift={(0,-4.5)}]
    
    \draw[fill=SkyBlue, opacity=0.8]
    (0,0,0) -- (0,1,0) -- (0,1,1) -- (0,0,1) -- cycle;
    
    \draw[fill=SkyBlue, opacity=0.8]
    (0,0,0) -- (1,0,0) -- (1,1,0) -- (0,1,0) -- cycle;
    \draw[fill=SkyBlue, opacity=0.8]
    (0,0,0) -- (1,0,0) -- (1,0,1) -- (0,0,1) -- cycle;
    
    \draw[fill=SkyBlue, opacity=0.8]
    (1,0,0) -- (1,1,0) -- (1,1,1) -- (1,0,1) -- cycle;
    
    \draw[fill=SkyBlue, opacity=0.8]
    (0,0,1) -- (1,0,1) -- (1,1,1) -- (0,1,1) -- cycle;
    
    \draw[fill=SkyBlue, opacity=0.8]
    (0,1,0) -- (1,1,0) -- (1,1,1) -- (0,1,1) -- cycle;
    \end{scope}

    \begin{scope}[shift={(2,-4)}, transform shape]
    \node at (-3.5,0) {\huge $\bm{b}_{1}$};
    \node[circle, draw, fill = black, minimum size=8, inner sep=0] (C1) at (0,0) {};
    \node[circle, draw, fill = black, minimum size=8, inner sep=0pt] (C2) at (0.5,0) {};
    \node[circle, draw, minimum size=8, inner sep=0pt] (C3) at (1.0,0) {};
    \node[circle, draw, minimum size=8, inner sep=0] (C4) at (1.5,0) {};
    \node at (2,0) {$\cdots$};
    \node[circle, draw, minimum size=8, inner sep=0] (C5) at (2.5,0) {};
    \node[circle, draw, fill = black,minimum size=8, inner sep=0] (C6) at (3,0) {};
    \node[circle, draw, fill = black,minimum size=8, inner sep=0] (C7) at (3.5,0) {};
    \draw[decorate, decoration={brace, amplitude=5pt, mirror}] 
    (3.7,-0.3) -- (3.7,2.5) node[midway, right=2pt]{};
    \node at (6,1.1) {\huge $4k(k-1)$};
    \node at (2,1.1) {\huge $\vdots$};
    \node at (-1.75,0.9) {\huge $\vdots$};
    \end{scope}

    \begin{scope}[shift={(0,-6.5)}]
    
    \draw[fill=SkyBlue, opacity=0.8]
    (0,0,0) -- (0,1,0) -- (0,1,1) -- (0,0,1) -- cycle;
    
    \draw[fill=SkyBlue, opacity=0.8]
    (0,0,0) -- (1,0,0) -- (1,1,0) -- (0,1,0) -- cycle;
    \draw[fill=SkyBlue, opacity=0.8]
    (0,0,0) -- (1,0,0) -- (1,0,1) -- (0,0,1) -- cycle;
    
    \draw[fill=SkyBlue, opacity=0.8]
    (1,0,0) -- (1,1,0) -- (1,1,1) -- (1,0,1) -- cycle;
    
    \draw[fill=SkyBlue, opacity=0.8]
    (0,0,1) -- (1,0,1) -- (1,1,1) -- (0,1,1) -- cycle;
    
    \draw[fill=SkyBlue, opacity=0.8]
    (0,1,0) -- (1,1,0) -- (1,1,1) -- (0,1,1) -- cycle;
    \end{scope}

    \begin{scope}[shift={(2,-6)}, transform shape]
    \node at (-3.5,0) {\huge $\bm{b}_{0}$};
    \node[circle, draw, fill = black, minimum size=8, inner sep=0] (C1) at (0,0) {};
    \node[circle, draw, fill = black, minimum size=8, inner sep=0pt] (C2) at (0.5,0) {};
    \node[circle, draw, minimum size=8, inner sep=0pt] (C3) at (1.0,0) {};
    \node[circle, draw, minimum size=8, inner sep=0] (C4) at (1.5,0) {};
    \node at (2,0) {$\cdots$};
    \node[circle, draw, minimum size=8,inner sep=0] (C5) at (2.5,0) {};
    \node[circle, draw, fill = black,minimum size=8, inner sep=0] (C6) at (3,0) {};
    \node[circle, draw, fill = black,minimum size=8, inner sep=0] (C7) at (3.5,0) {};
    \draw[decorate, decoration={brace, amplitude=3.5pt, mirror}] 
    (3.7,-0.7) -- (3.7,0.7) node[midway, right=2pt]{};
    \node at (5,0) {\huge $2k$};
    \end{scope}
    
    \begin{scope}[shift={(-0.4,-7.5)}]
    \draw[fill=SkyBlue, opacity=0.8] 
    (0,0,0) -- (1,0,0) -- (1,0,-1) -- (0,0,-1) -- cycle;
    \end{scope}
    
    \begin{scope}[shift={(2,-7.5)}, transform shape]
    \node at (-4.4,0) {\huge $\bm{f}_\mathrm{bot}(2k)$};
    \node[circle, draw, minimum size=8, inner sep=0] (C1) at (0,0) {};
    \node[circle, draw, fill = black, minimum size=8, inner sep=0pt] (C2) at (0.5,0) {};
    \node[circle, draw, minimum size=8, inner sep=0pt] (C3) at (1.0,0) {};
    \node[circle, draw, minimum size=8, inner sep=0] (C4) at (1.5,0) {};
    \node at (2,0) {$\cdots$};
    \node[circle, draw, minimum size=8, inner sep=0] (C5) at (2.5,0) {};
    \node[circle, draw, fill = black,minimum size=8, inner sep=0] (C6) at (3,0) {};
    \node[circle, draw, fill = black,minimum size=8, inner sep=0] (C7) at (3.5,0) {};
    \end{scope}
    
    \begin{scope}[shift={(-0.4,-9)}]
    \draw[fill=SkyBlue, opacity=0.8] 
    (0,0,0) -- (1,0,0) -- (1,1,0) -- (0,1,0) -- cycle;
    \end{scope}
    
    \begin{scope}[shift={(2,-8.5)}, transform shape]
    \node at (-4.6,0) {\huge $\bm{f}_\mathrm{bot}(2k-1)$};
    \node[circle, draw, fill = black, minimum size=8, inner sep=0] (C1) at (0,0) {};
    \node[circle, draw, minimum size=8, inner sep=0pt] (C2) at (0.5,0) {};
    \node[circle, draw, minimum size=8, inner sep=0pt] (C3) at (1.0,0) {};
    \node[circle, draw, minimum size=8, inner sep=0] (C4) at (1.5,0) {};
    \node at (2,0) {$\cdots$};
    \node[circle, draw, minimum size=8, inner sep=0] (C5) at (2.5,0) {};
    \node[circle, draw, fill = black,minimum size=8, inner sep=0] (C6) at (3,0) {};
    \node[circle, draw, fill = black,minimum size=8, inner sep=0] (C7) at (3.5,0) {};
    \end{scope}
    
    \begin{scope}[shift={(-0.4,-11)}, transform shape]
    \draw[fill=SkyBlue, opacity=0.8] 
    (0,0,0) -- (1,0,0) -- (1,1,0) -- (0,1,0) -- cycle;
    \node at (0.5,1.6) {\huge $\vdots$};
    \end{scope}
    
    \begin{scope}[shift={(2,-10.5)}, transform shape]
    \node at (-3.8,0) {\huge $\bm{f}_\mathrm{bot}(1)$};
    \node[circle, draw, fill = black, minimum size=8, inner sep=0] (C1) at (0,0) {};
    \node[circle, draw, minimum size=8, inner sep=0pt] (C2) at (0.5,0) {};
    \node[circle, draw, minimum size=8, inner sep=0pt] (C3) at (1.0,0) {};
    \node[circle, draw, minimum size=8, inner sep=0] (C4) at (1.5,0) {};
    \node at (2,0) {$\cdots$};
    \node[circle, draw, minimum size=8, inner sep=0] (C5) at (2.5,0) {};
    \node[circle, draw, fill = black,minimum size=8, inner sep=0] (C6) at (3,0) {};
    \node[circle, draw, fill = black,minimum size=8, inner sep=0] (C7) at (3.5,0) {};
    \node at (1,1.1) {\huge $\vdots$};
    \draw[decorate, decoration={brace, amplitude=5pt, mirror}] 
    (3.7,-0.3) -- (3.7,3.5) node[midway, right=2pt]{};
    \node at (5,3.2/2) {\huge $2k$};
    \end{scope}
    
    \begin{scope}[shift={(-0.4,-12.5)}, transform shape]
    \draw[fill=SkyBlue, opacity=0.8] 
    (0,0,0) -- (1,0,0) -- (1,1,0) -- (0,1,0) -- cycle;
    \end{scope}
    
    \begin{scope}[shift={(2,-12)}, transform shape]
    \node at (-3.5,0) {\huge $\bm{f}^{(m)}_\mathrm{bot}$};
    \node[circle, draw, fill = black, minimum size=8, inner sep=0] (C1) at (0,0) {};
    \node[circle, draw, minimum size=8, inner sep=0pt] (C2) at (0.5,0) {};
    \node[circle, draw, minimum size=8, inner sep=0pt] (C3) at (1.0,0) {};
    \node[circle, draw, minimum size=8, inner sep=0] (C4) at (1.5,0) {};
    \node at (2,0) {$\cdots$};
    \node[circle, draw, minimum size=8, inner sep=0] (C5) at (2.5,0) {};
    \node[circle, draw, fill = black,minimum size=8, inner sep=0] (C6) at (3,0) {};
    \node[circle, draw, fill = black,minimum size=8, inner sep=0] (C7) at (3.5,0) {};
    \end{scope}
    \end{tikzpicture}
    \end{subfigure}
    \caption{Schematic illustration of the concatenation procedure, which joins two $3$-faces, $\bm{f}^{(m)}_\mathrm{bot}$ and $\bm{f}^{(1)}_\mathrm{top}$, via some $4$-cubes. 
    The filled circles represent the elements of $\mathrm{HalfInt}$ associated with a given face or cube, 
    which indicate the orientation. 
    The left hand side denotes the case where $1 \notin \mathrm{HalfInt}(\bm{f}^{(m)}_\mathrm{bot}), \mathrm{HalfInt}(\bm{f}^{(1)}_\mathrm{top})$, while the  
    right hand side denotes the case where $1 \in \mathrm{HalfInt}(\bm{f}^{(m)}_\mathrm{bot})$ but 
    $1 \notin \mathrm{HalfInt}(\bm{f}^{(1)}_\mathrm{top})$.
    }
    \label{fig:concatenation}
\end{figure}

    For SAMs, this concatenation procedure is guaranteed to add the number of boundary components. Boundaries of hypercubes have no boundary. Therefore, adding them does not extend the boundary.
    
    In the case where $1 \not \in \mathrm{HalfInt}(\bm{f}^{(m)}_\mathrm{bot})$ and or $1 \not \in \mathrm{HalfInt}(\bm{f}^{(1)}_\mathrm{top})$, where we added individual hyperfaces, the boundary is extended. However, this is always followed by a layer of added hypercubes. This prevents the boundary of the top component from merging with the boundary of the bottom component.

    In the case of SOMs, the added hypercubes are made to only connect to $\bm{f}^{(1)}_\mathrm{top}$ or $\bm{f}^{(m)}_\mathrm{bot}$ on hyperedges that have more than 2 neighbours.

\end{proof}

\subsection{Lower bounds from `directed walk' manifold configurations} \label{sec:lower-bound}
Here, we will construct lower bounds from `directed walk' manifold configurations, which are a subset of all SAMs. This will lead to the following lemmas,

\begin{lemma} \label{lem:sam-h-1-lower-bound} The number of SAMs with hyperarea $n$ with one connected boundary component is lower bounded as
    \begin{align}
        c^{\mathrm{SAM}}_{(d,k),n}(h=1) \geq {d \choose k} \left(k(d-k+1)\right)^{n-1},
    \end{align}
\end{lemma}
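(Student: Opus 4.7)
The plan is to exhibit an injection from a set of \emph{directed walks of hyperfaces} into $\mathrm{SAM}_{(d,k),n}(h=1)$ of the required cardinality. A directed walk is a sequence $(f_1,\dots,f_n)$ of $k$-faces such that $f_1$ sits at the origin with one of the $\binom{d}{k}$ possible orientations, and each $f_{i+1}$ is obtained from $f_i$ by first (i) choosing a ``positive'' attaching hyperedge $\bm{l} = \bm{f}_i + \tfrac{1}{2}\bm{j}$ with $j \in \mathrm{HalfInt}(\bm{f}_i)$, which yields $k$ choices, and then (ii) setting either $\bm{f}_{i+1} = \bm{f}_i + \bm{j}$ (same orientation) or $\bm{f}_{i+1} = \bm{f}_i + \tfrac12\bm{j} + \tfrac12\bm{m}$ for some $m \in \mathrm{Int}(\bm{f}_i)$ (a positive turn), which yields $1 + (d-k) = d-k+1$ choices. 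Each step thus contributes $k(d-k+1)$ choices, so the number of directed walks of length $n$ is $\binom{d}{k}(k(d-k+1))^{n-1}$.

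The next step will be to verify that each such walk defines a valid SAM and that the map from walks to SAMs is injective. The key invariant is the center-coordinate sum $\Sigma(\bm{f}) := \sum_m f_m$: the construction forces $\Sigma(\bm{f}_{i+1}) = \Sigma(\bm{f}_i) + 1$, so $\Sigma(\bm{f}_i) = k/2 + (i-1)$ is strictly increasing. Hence all $f_i$ are pairwise distinct, which immediately yields injectivity (one recovers the sequence from the SAM by sorting faces by $\Sigma$). Moreover, any two hyperfaces sharing a hyperedge must have $\Sigma$-values differing by at most one (each is $\pm\tfrac12$ away from the shared hyperedge's sum), so only consecutive $f_i, f_{i+1}$ can share a hyperedge; in particular no hyperedge is adjacent to more than two $k$-faces, and the SAM condition is met.

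The main obstacle will be establishing that the resulting manifold has exactly one boundary component. I would argue inductively that each partial union $\bigcup_{j\leq i} f_j$ is homeomorphic to a $k$-ball, and that attaching $f_{i+1}$ along a single $(k-1)$-face of its boundary preserves this property. The delicate point is showing that $f_i$ and $f_{i+1}$ share exactly one hyperedge (and not two), which reduces to a short combinatorial check in both the ``same orientation'' and ``turning'' sub-cases, using that in the hypercubic lattice the shared hyperedge between two adjacent $k$-faces is uniquely determined by the difference of their centers. Since the boundary of a $k$-ball is a connected $(k-1)$-sphere for $k \geq 2$, we conclude $h=1$, which completes the proof.
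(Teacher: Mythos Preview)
Your proposal is correct and takes essentially the same approach as the paper: both count directed-walk configurations of $k$-faces, arriving at $\binom{d}{k}\bigl(k(d-k+1)\bigr)^{n-1}$. Your coordinate-sum invariant $\Sigma$ makes explicit the self-avoidance and injectivity that the paper leaves implicit in the phrase ``increases the coordinates''; the paper's proof is terser and, like yours, does not fully work out the verification that $h=1$.
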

Such configurations also allow for the following bounds for the number of closed SAMs,
\begin{lemma} \label{lem:sam-h-0-lower-bound}
    The number of SAMs with hyperarea $n$ with no boundaries is lower bounded as
    \begin{align}
    c^{\mathrm{SAM}}_{(d,k),n}(h=0) \geq {d \choose k + 1} \left((k+1)(d-k)\right)^{\frac{n-2(k+1)}{2k}}.
\end{align}
\end{lemma}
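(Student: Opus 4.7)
The plan is to build a lower bound on closed $k$-SAMs by taking the $k$-boundary of a ``thick'' directed-walk $(k+1)$-SAM. Since $k+1 \leq d$, \Cref{lem:sam-h-1-lower-bound} applies with $(d, k)$ replaced by $(d, k+1)$, giving
$$c^{\mathrm{SAM}}_{(d,k+1),m}(h=1) \;\geq\; \binom{d}{k+1}\bigl((k+1)(d-k)\bigr)^{m-1}.$$
The directed-walk construction furnishes a connected tree-like $(k+1)$-manifold with a single boundary component, and that boundary is a closed $k$-SAM, i.e.\ an element of $\mathrm{SAM}_{(d,k),n}(h=0)$ for a suitable $n$. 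So every directed $(k+1)$-walk yields a closed $k$-SAM, and I just need to convert the area bound.

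Next I would count the boundary $k$-area. Each of the $m$ $(k+1)$-faces contributes $2(k+1)$ bounding $k$-faces, and consecutive faces in a directed walk share exactly one $k$-face that becomes interior to the $(k+1)$-manifold and drops out of its boundary. Hence
$$n \;=\; m(2k+2) - 2(m-1) \;=\; 2mk + 2,$$
so $m = (n-2)/(2k)$. Substituting into the $(k+1)$-SAM bound yields
$$\binom{d}{k+1}\bigl((k+1)(d-k)\bigr)^{(n-2)/(2k)-1} \;=\; \binom{d}{k+1}\bigl((k+1)(d-k)\bigr)^{(n-2(k+1))/(2k)},$$
which matches the claim. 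For values of $n$ not of the form $2mk+2$ the bound is extended by concatenating a fixed small closed $k$-SAM via \Cref{thm:concatenation}; this affects only a bounded prefactor and does not change the exponential order relevant for the growth constant.

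The main obstacle is showing that the map which sends a directed-walk $(k+1)$-SAM to its $k$-boundary is injective, so that distinct directed walks really produce distinct closed $k$-SAMs. Two $(k+1)$-SAMs share a $k$-boundary iff their symmetric difference is a closed $(k+1)$-SAM, and closed $k$-manifolds in $\mathbb{Z}^d$ can in principle bound multiple $(k+1)$-regions when $d>k+1$. I would overcome this by leveraging the rigidity of the directed-walk construction itself: given the closed $k$-boundary, the seed $(k+1)$-face is forced to be the one attached to the lexicographically smallest bounded $(k+1)$-cube on a designated side, and the directedness rule of \Cref{lem:sam-h-1-lower-bound} then determines each subsequent $(k+1)$-face uniquely by induction from the remaining boundary data. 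This recovery argument is short but is the one step that genuinely requires attention, rather than routine bookkeeping.
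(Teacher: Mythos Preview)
Your approach is essentially the paper's: it too builds directed $(k+1)$-walks (that is, it applies the construction behind \Cref{lem:sam-h-1-lower-bound} with $k$ replaced by $k+1$), takes the $k$-boundary, and converts the face count via the same arithmetic $n = 2mk+2$, arriving at the identical exponent $(n-2(k+1))/(2k)$. The paper simply asserts the passage from walk to boundary without comment; your explicit flagging of the injectivity issue and the lex-smallest recovery sketch are a genuine improvement in care over the original, and your remark about $n$ not of the form $2mk+2$ is likewise a point the paper leaves implicit (it only uses the lemma for the growth constant, where a subsequence suffices).
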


\begin{proof}[Proof of \Cref{lem:sam-h-1-lower-bound,lem:sam-h-0-lower-bound}]
    This comes from doing a `directed walk' configuration of manifolds. Start with one $k$-face, which there are $d \choose k$ orientations. Then, there are $k$ places which neighbours a $(k-1)$-edge which increase the coordinates. On each $(k-1)$-edge, there are $(d-k+1)$ ways of attaching a new $k$-face which increases the coordinates in the Cartesian representation. This is illustrated in \Cref{fig:directed-walk-config}.
    
    These configurations also add a constant number of boundary components, which allows us to come up with the following lower bound.
    
    Taking the boundary of the initial configuration of one $k$-face gives a closed $(k-1)$-manifold with $2k$ $(k-1)$-edges. Whenever we attach a new $k$-face in the above scheme, then take the boundary, we increase the number of $(k-1)$ edges by $(2k-2)$. Therefore the relationship between the number of $k$-faces, $n$, and number of $(k-1)$-edges, $m$, is $m=(2k-2)(n-1) + 2k$. Hence $c_m^{d,k-1,h=0} \geq {d \choose k} \left(k(d-k+1)\right)^{\frac{m-2k}{(2k-2)}}$. Sending $k-1 \rightarrow k$, and $m \rightarrow n$, we have retrieve the lower bound.
\end{proof}

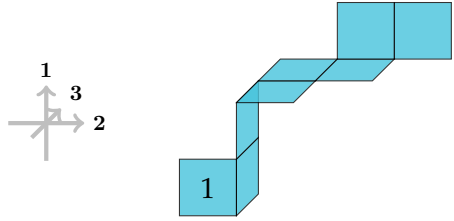
\begin{figure}[H]
    \centering
\begin{tikzpicture}[baseline={([yshift=-.5ex] current bounding box.center)}, scale=0.5]
    \draw [gray!50, ultra thick, ->] (0, -1, 0) -> (0, 1, 0) node[above, scale = 0.75, black] {$\bm{1}$};
    \draw [gray!50, ultra thick, ->] (-1, 0, 0) -> (1, 0, 0) node[right, scale = 0.75, black] {$\bm{2}$};
    \draw [gray!50, ultra thick, ->] (0, 0, 1) -> (0, 0, -1) node[above right, scale = 0.75, black] {$\bm{3}$};
    \end{tikzpicture}
    \quad \quad
    \begin{tikzpicture}[baseline={([yshift=-.5ex] current bounding box.center)}, scale=0.75]
    \draw[fill=SkyBlue, opacity=0.8]
        (0,0,0) -- (1,0,0) -- (1,1,0) -- (0,1,0) -- cycle;
    \draw[fill=SkyBlue, opacity=0.8]
        (1,0,0) -- (1,1,0) -- (1,1,-1) -- (1,0,-1) -- cycle;
    \draw[fill=SkyBlue, opacity=0.8]
        (1,1,0) -- (1,2,0) -- (1,2,-1) -- (1,1,-1) -- cycle;
    \draw[fill=SkyBlue, opacity=0.8]
        (1,2,0) -- (2,2,0) -- (2,2,-1) -- (1,2,-1) -- cycle;
    \draw[fill=SkyBlue, opacity=0.8]
        (1,2,-1) -- (2,2,-1) -- (2,2,-2) -- (1,2,-2) -- cycle;
    \draw[fill=SkyBlue, opacity=0.8]
        (2,2,-1) -- (3,2,-1) -- (3,2,-2) -- (2,2,-2) -- cycle;
    \draw[fill=SkyBlue, opacity=0.8]
        (3,2,-2) -- (2,2,-2) -- (2,3,-2) -- (3, 3,-2) -- cycle;
    \draw[fill=SkyBlue, opacity=0.8]
        (4,2,-2) -- (3,2,-2) -- (3,3,-2) -- (4, 3,-2) -- cycle;
    \node[] at (1/2, 1/2, 0) {1};
    \end{tikzpicture}
\caption{Illustration of a `directed walk' manifold configuration for $(d, k) = (3, 2)$. $k$-faces are only added in directions which increase coordinates.}
\label{fig:directed-walk-config}
\end{figure}

\section{Twig method for upper bounds for growth constants} \label{sec:twig-method}

In this section, we generalise the `twig' method originally developed in \cite{eden1961two,klarner1973procedure} to restricted surfaces on the cubic lattice. We first discuss the construction for SASs on the square lattice, also known as fixed polyominoes (without any prefixes), before generalising to the cubic lattice.

Using these methods we prove the following.

\begin{theorem}
    The connective constant for SASs on the cubic lattice verifies the upper bound
    \begin{align}
        \mu^\rm{SAS}_\cube \leq 17.11728.
    \end{align}
\end{theorem}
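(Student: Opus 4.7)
The plan is to adapt Klarner's twig method, originally developed for polyominoes in the plane, to self-avoiding surfaces on the cubic lattice. Since the existence of $\mu^{\mathrm{SAS}}_{\cube}$ is guaranteed by \Cref{thm:existence}, it will suffice to produce an exponential upper bound on $c_n^{\mathrm{SAS}}$ of the form $C\rho^n$ with $\rho = 17.11728$; the growth constant is then at most $\rho$. The baseline bound from \Cref{lem:som-num-upper-bound} is $\mu_{\cube}^{\mathrm{SAS}} \leq 81/4 = 20.25$, and the twig refinement shaves this down by recognising that many of the ``attachment slots'' counted naively there are not independently available once the surface has begun to grow.

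First, given a SAS $\Sigma$ with $n$ faces, I would order its faces lexicographically by centre coordinates and build its canonical spanning tree $T(\Sigma)$: each face $\bm{f}^{(i)}$ with $i \geq 2$ is attached to its neighbouring face of smallest index. This yields an injection $\Sigma \hookrightarrow T(\Sigma)$, so it is enough to enumerate such rooted, orientation-labelled spanning trees of size $n$. Second, following Klarner, I would partition the growth of $T$ into a sequence of \emph{twig moves}: a twig is a planted subtree of bounded size attached to an existing face via one of its four bounding edges, where at each such edge the child face can take one of $(2(d-k)+1) = 3$ orientations in three dimensions. For every admissible twig type I would record the local frontier of the partial surface that it induces.

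Third, I would encode these local frontier types as states of a transfer matrix $M$, whose $(i,j)$ entry counts the number of admissible twig extensions that take frontier type $i$ to frontier type $j$. The largest eigenvalue of $M$ upper-bounds the per-face growth rate; by increasing the maximum twig size (in direct analogy with the loop-size parameter of the automata method in \Cref{sec:automata}), these eigenvalues decrease monotonically towards $\mu_{\cube}^{\mathrm{SAS}}$. The numerical value $17.11728$ is then the largest eigenvalue obtained at a computationally tractable twig size.

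The main obstacle will be correctly enumerating admissible twig moves in three dimensions. Unlike the 2D polyomino case, a face has four bounding edges and three orientations per edge, so the local combinatorics are considerably richer, and many of the naive $3^4 = 81$ attachment patterns are forbidden either by self-avoidance (two proposed child faces would overlap with each other or with an ancestor) or by the canonical-tree lexicographic condition (a proposed child has coordinates smaller than its parent). Verifying injectivity of the encoding, i.e.\ that distinct SASs always yield distinct twig sequences, and ruling out moves that silently reintroduce a face already placed, will account for the bulk of the work; the final numerical bound is then a matter of implementing the enumeration and diagonalising $M$.
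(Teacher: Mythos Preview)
Your high-level strategy --- adapt Klarner's twig method to SAS on the cubic lattice --- matches the paper, but the mechanism you describe is not the twig method and would not produce the bound as written. Twigs at level $\ell$ have \emph{variable} numbers of dead faces $N_b$ and live (frontier) faces, so attaching a twig is not a per-face operation, and the branching of the canonical spanning tree means there is no finite set of ``frontier types'' to serve as states of a transfer matrix. The actual machinery is bivariate: each twig gets the monomial $x^{N_c-1}y^{N_b}$, one forms $p_\ell(x,y)=\sum_{\text{twigs}}x^{N_c-1}y^{N_b}$, and a SAS of size $n$ corresponds to a twig sequence whose product monomial is $x^{n-1}y^n$. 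The upper bound is the inverse radius of convergence of the \emph{diagonal} of $xy(1+3x)^4/(1-p_\ell(x,y))$, extracted numerically via the roots of the discriminant of $p_\ell(s,z/s)$ with respect to $s$. The value $17.11728$ is such an inverse root at twig level $\ell=3$, not a matrix eigenvalue; your analogy with the automata method of \Cref{sec:automata} is misleading here, because that method tracks a linear chain (a walk) whereas twigs encode a branching tree.

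There are also $d=3$-specific complications your outline misses. Unlike the plane, one cannot assign a unique \emph{entering face} to a twig --- only an entering \emph{edge} --- so the first face of a SAS may neighbour up to four faces rather than two, which is what forces the extra numerator factor $(1+3x)^4$ in the generating function. And when assembling $p_\ell$ from the enumerated twigs, distinct boundary edges can activate the \emph{same} candidate live face (the paper calls these ``non-free'' edges), so their contributions do not factor as independent $(1+x)$ or $(1+3x)$ terms and must be disentangled case by case. These are where the real work lies; your instinct that the self-avoidance checks dominate the effort is right in spirit, but the bookkeeping is polynomial-coefficient bookkeeping, not transfer-matrix-state bookkeeping.
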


This can be compared to Monte Carlo estimate $\mu^\rm{SAS}_\cube \approx 12.798 \pm 0.018$ \cite{glaus1986monte,van1989self}.

\subsection{Twig method for polyominoes}
\label{Twig method}

We begin with a brief revision of the `twig' method presented in \cite{klarner1973procedure}, which is a method to find an upper bound for the growth constant of polyominoes (i.e. elements of $\mathrm{SAM}_{(2,2)}$). We refer readers to the original article for more details.

The idea of the twig method is that each polyomino can be associated with a planted tree (those with a starting node), in the way that was described in \Cref{sec:existence-and-upper-bound}, where the first face is the lexicographically smallest face, then connected to other faces in a tree structure. Twigs are particular subtrees of such a tree, such that any polyomino can be specified as a sequence of twigs. The set of twigs, $\mathrm{Twigs}(\ell)$, is specified by its level $\ell$. As $\ell$ is increased, we consider larger subtrees, each of which is a sequence of twigs of previous levels. The reason behind the improved upper bound with increasing $\ell$ is because not all sequences of twigs of previous levels is valid.

Each face of a twig has an assigned state, depending on whether it can be connected to a subsequent twig or not. A face of a twig can be
\begin{itemize}
    \item dead, if it cannot be connected to the \textit{next} twig. It is indicated by a black dot.
    \item It can be alive, if it can be connected to a subsequent twig. It is marked with a white circle.
    \item Unoccupied faces near the faces of the twig can be marked with a cross. This will be useful as a twig of level $\ell$ will be used to generate twigs of level $(\ell+1)$. The crosses denote faces that cannot be occupied at higher level of twigs generated from a given twig.
\end{itemize}
Geometrically, a twig in $\mathrm{Twigs}(\ell)$ is a set of connected faces with a fixed starting face that is dead (black), connected to living faces (if there are any), at Manhattan distance $\ell$ from the first face, via dead faces.

When two twigs are connected, a living face of the first twig is overlapped with the first face of the second twig (which is dead). In this way, we can connect twigs to form a given polyomino.
Note that that the first face of a polyominoes (the lexicographically smallest face) is guaranteed to not have any faces below it \Cref{fig:1 4 q-variables}. Therefore, one can assign a `zeroth face' to it, and the first face can be thought to have `entered' from the zeroth face (i.e. it is connected to this face). Subsequent faces always enter from previous faces. Therefore, the face below the first face of a twig can be marked with a cross, since it is guaranteed that there is a face there.
The zeroth face of a polyomino justifies the notion of `entering' face of a twig. This is the face that is below the first face of the twig which we mark with a cross (see \Cref{fig:twigs-level-1}). The entering face is successively occupied when connecting twigs together; therefore, we can guarantee that there is a face there, and can mark it with a cross. We can also have a notion of `entering' edge, which is the edge between the `entering' face and the first face of the twig, which is marked in red in \Cref{fig:twigs-level-1}.


Twigs are built recursively, starting from level one. $\mathrm{Twigs}(1)$ is denoted diagrammatically in \Cref{fig:twigs-level-1}, where the faces with white squares denote faces that can either be an occupied living face, or unoccupied crossed face. These are all nearest neighbours of the first face. To generate $\mathrm{Twigs}(\ell +1)$ from $\mathrm{Twigs}(\ell)$, for each twig in $\mathrm{Twigs}(\ell)$, we turn living faces into `new' dead faces or cross (unoccupied) and consider all possible such combinations. The recursive procedure to determine the twigs at the next level is summarised in \Cref{alg:twig_construction}.
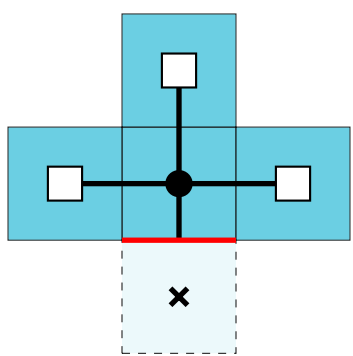
\begin{figure}[H]
    \centering
 \begin{tikzpicture}[scale=1.5]
     \draw[fill=SkyBlue, fill opacity=0.1, draw opacity=0.9, dashed] 
        (0,0) -- (1,0) -- (1,1) -- (0,1) -- cycle;
    \draw[line width=2pt] plot[mark=x, mark size=3pt] coordinates {(0.5,0.5)};

    \draw[fill=SkyBlue, opacity=0.8] 
        (0,1) -- (1,1) -- (1,2) -- (0,2) -- cycle;

    \fill[black] (0.5,1.5) circle[radius=0.12];

    \draw[fill=SkyBlue, opacity=0.8] 
        (1,1) -- (2,1) -- (2,2) -- (1,2) -- cycle;

    \draw[fill=SkyBlue, opacity=0.8] 
        (0,2) -- (1,2) -- (1,3) -- (0,3) -- cycle;

    \draw[fill=SkyBlue, opacity=0.8] 
        (0,1) -- (-1,1) -- (-1,2) -- (0,2) -- cycle;

    \draw[black, line width = 2pt] (0.5,1) -- (0.5,1.5);

    \draw[black, line width = 2pt] (0.5,1.5) -- (-0.5,1.5);
    \draw[black, line width = 2pt] (0.5,1.5) -- (1.5,1.5);
    \draw[black, line width = 2pt] (0.5,1.5) -- (0.5,2.5);

    \draw[red, line width=2pt] (0,1) -- (1,1);

    \draw[fill=white, draw=black, line width=0.8pt] 
        (1.35,1.35) -- (1.65,1.35) -- (1.65,1.65) -- (1.35,1.65) -- cycle;

    \begin{scope}[shift={(-1,1)}]
        \draw[fill=white, draw=black, line width=0.8pt] 
        (1.35,1.35) -- (1.65,1.35) -- (1.65,1.65) -- (1.35,1.65) -- cycle;
    \end{scope}

    \begin{scope}[shift={(-2,0)}]
        \draw[fill=white, draw=black, line width=0.8pt] 
        (1.35,1.35) -- (1.65,1.35) -- (1.65,1.65) -- (1.35,1.65) -- cycle;
    \end{scope}

\end{tikzpicture}
    \caption{Compact representation of set of twigs at level~1, $\mathrm{Twigs}(\ell=1)$, for polyominoes on the square lattice. Faces with white squares denote faces which could be living faces (which would be denoted with white circles), or unoccupied crossed faces. The cross represents unoccupied faces which higher level twigs generated from this twig cannot occupy. The face with a cross is the `entering face' and the red edge is the `entering edge'. 
}
    \label{fig:twigs-level-1}
\end{figure}
\begin{algorithm}[H]
\caption{Recursive construction of higher level twigs}
\label{alg:twig_construction}
\begin{enumerate}
    \item Start with the set of twigs from the previous level, $\mathrm{Twigs}(\ell-1)$.
    
    \item Identify all faces marked with a white circle (alive).
    
    \item Convert each of these alive faces into new dead faces by replacing the white circle with a black dot. Store this set of \say{partial} new twigs.
    
    \item For each new dead face (white circles turned into black dots in Step~3) of the \say{partial} twig, find all nearest-neighbor faces, excluding:
        \begin{itemize}
            \item Any faces that have already been marked as dead (either cross or black dot).
            \item Neighbors of faces previously marked as dead.
        \end{itemize}
    
    \item For each set of valid neighboring faces, consider all possible combinations in which each neighboring face is:
    \begin{itemize}
        \item Alive (represented by a white circle), or
        \item Not part of the polyomino (represented by a cross).
    \end{itemize}
    Each new configuration of faces defines a new twig.
    
    \item Collect all such twigs from Step~5, along with the twigs from all previous levels that contain no white circles (and hence only black dots), to form the full set of twigs at the next level, $\mathrm{Twigs}(\ell)$.
\end{enumerate}
\end{algorithm}
Having established a procedure for generating the twigs at each level, we now turn to the task of extracting an upper bound for the growth constant. We note that a polyomino is a sequence of twigs such that the total number of black dots across all twigs equals the total number of cells in the polyomino, $n$. By a “cell” we mean either a face with a black dot or an alive face (white circle).
This follows from the fact that, given a white face of a twig, we can connect another twig to it through its first face (black dot). Therefore, for each white face in a twig, there exists a corresponding face with a black dot in another twig that is connected to it.

We assign a monomial $x^{N_c-1}y^{N_{\mathrm{b}}}$ to each twig where $N_c$ is the number of cell present in the twig (excluding crosses) and $N_{\mathrm{b}}$ is the number of black dots. We define the monomial corresponding to a sequence of twigs as the product of each monomials for each twigs. Define
\begin{equation}
    N^{\mathrm{tot}}_{\mathrm{c}} = \sum_{i=1}^{N_{\mathrm{twigs}}}N^{(i)}_{\mathrm{c}} \hspace{10pt} N^{\mathrm{tot}}_{\mathrm{b}} = \sum_{i=1}^{N_{\mathrm{twigs}}}N^{(i)}_{\mathrm{b}} \hspace{1pt},
\end{equation}
where $N^{(i)}_{\mathrm{b}}$ and $N^{(i)}_{\mathrm{c}}$ is the number of black dots and cells contained in the $i$-th twigs and $N_{\mathrm{twigs}}$ is the total number of twigs. Since for each white faces there is a twig connected to it, we have the equality $N_{\mathrm{twigs}} = N^{\mathrm{tot}}_{w} +1$, where the additional $1$ accounts for the initial twig. Therefore the monomial for the sequence of twigs is given by
\begin{equation}
    x^{N^{\mathrm{tot}}_{\mathrm{c}}-N_{\mathrm{twigs}}}y^{N^{\mathrm{tot}}_{\mathrm{b}}} = x^{N^{\mathrm{tot}}_{\mathrm{b}}+N^{\mathrm{tot}}_{w}-N_{\mathrm{twigs}}}y^{N^{\mathrm{tot}}_{\mathrm{b}}} = x^{N^{\mathrm{tot}}_{\mathrm{b}}-1}y^{N^{\mathrm{tot}}_{\mathrm{b}}} = x^{n-1}y^{n} \hspace{1pt}.
\end{equation}

As a consequence of this analysis, polyominoes of size~$n$ are represented by sequences of twigs whose associated monomials are of the form $x^{n-1} y^n$. This is because, for every polyomino $X$ with number of faces $n = n(X)$, we can assign an injection from $X$ to a sequence of twigs. 

However, there is a caveat: not all sequences of twigs with monomials of the form $x^{n-1} y^n$ correspond to valid polyominoes. This is because some sequences of twigs may contain overlapping faces. Therefore, we have constructed an injection from the set of polyominoes of size~$n$ into the set of suitable sequences of twigs—those whose associated monomials are of the form $x^{n-1} y^n$. Consequently, the number of such sequences provides an upper bound on the number of polyominoes of size~$n$. Given the set of twigs at level $\ell$, we can count all the sequences of the form $x^{n-1}y^{n}$ as follows. First, we generate the polynomial  
\begin{equation}
    p_\ell(x, y) = \sum_{i=1}^{N_{\mathrm{twigs}}(\ell)} x^{N^{(i)}_{\mathrm{c}} - 1} y^{N^{(i)}_{\mathrm{b}}} \hspace{1pt},
\end{equation}
which is the sum of the monomials corresponding to the twigs generated at level~$\ell$.
Then, we use the geometric series expansion:
\begin{equation}
\label{eqn: function d=2}
    x \sum_{n \geq 0} \left(p_\ell(x, y)\right)^n = \frac{x}{1-p_\ell(x, y)}=\sum_{m_x, m_y} c_{m_x, m_y}(h) \, x^{m_x} y^{m_y} \hspace{1pt}.
\end{equation}
The diagonal coefficient \( c_{n,n}(\ell) \) corresponds to the number of sequences of twigs of the form \( x^{n-1} y^n \) generated using twigs at level~$\ell$. From this reasoning, we obtain the following upper bound for the growth constant:
\begin{equation}
    \mu^{\mathrm{SAS}}_{\square} \leq \lim_{n \to \infty} c_{n n}(\ell)^{1/n} \hspace{1pt},
\end{equation}
which becomes progressively sharper as the level $\ell$ increases. The task of finding an upper-bound for the growth constant is reduced to determining the radius of convergence of the diagonal function:
\begin{equation}
f_{\ell}(z) = \sum_{n} c_{n, n}(\ell) \, z^{n} \hspace{1pt},
\end{equation}
obtained from the function $x/(1-p_{\ell}(x, y))$. For a generic vale of $\ell$, this task can be accomplished numerically as we explain in Appendix~\ref{apdx: diagonal function}. However, the specific value $\ell=1$ can be solved analytically. In this case, the twigs are those in Fig.~\ref{fig:twigs-level-1} and they corresponds to the following polynomial 
\begin{equation}
    p_1(x,y) = y(1+x)^3 \hspace{1pt}.
\end{equation}
Which corresponds to the following geometric series:
\begin{equation}
    f_1(x,y) = \frac{x}{1-y(1+x)^3} = x\sum_{n}y^n(1+x)^{3n} = \sum_{nm}\binom{3n}{m}y^nx^{m+1} \hspace{1pt}.
\end{equation}
Hence, in this case the upper bound is
\begin{equation}
\label{eqn: Bound SAS square}
\mu^{\rm{SAS}}_{\square} \leq \lim_{n \to \infty}\binom{3n}{n-1}^{1/n} = \frac{27}{4} \hspace{1pt},  
\end{equation}
which is what was obtained in \cite{eden1961two}.
\subsection{Twig method for self-avoiding surfaces on the cubic lattice}
The previous method can be generalized to three dimensions (i.e., for $\mathrm{SAM}_{(3,2)}$), but with an important caveats. In $d=2$, given a face and an edge of that face which is known to be doubly occupied (i.e., shared by two faces), the orientation of the connecting face is fully determined. As a result, we can uniquely assign an entering face attached to the first face of the given twig. In contrast, in $d=3$, given a face and a doubly occupied edge, there are multiple possible choices for the orientation of the connecting face. Consequently, in $d=3$ we cannot assign a unique zeroth face (see Fig.~\ref{fig:1st and 0th face}). We remind the reader that the zeroth face of a twig in $d=2$ acts as a dead face. Consequently, the zeroth face restricts the number of possible twigs, since faces containing either white circles or black dots cannot neighbor it, except for the first face (see Step~4 of the Algorithm~ \ref{alg:twig_construction}).

While in $d=3$ we cannot uniquely determine the orientation of the entering face, we can still identify an entering edge. Therefore, when constructing twigs in $d=3$, no cells may be added that share the entering edge, apart from the first face.

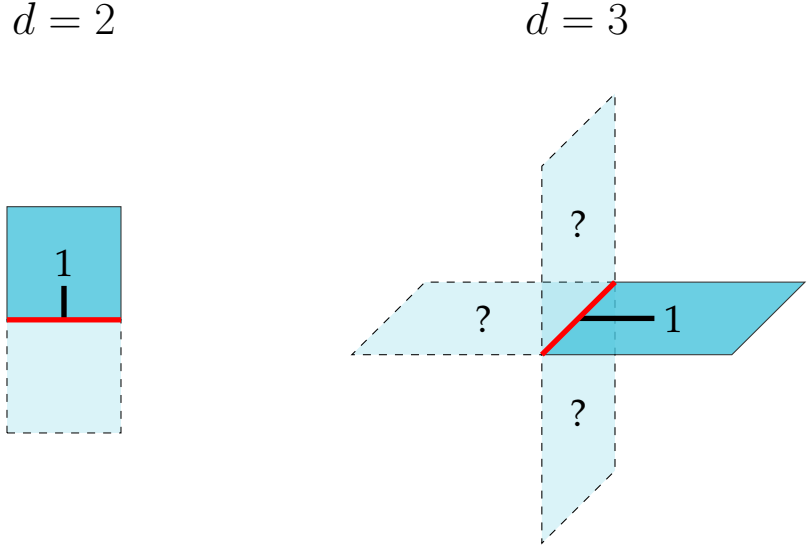
\begin{figure}[H]
    \centering
\begin{tikzpicture}
    \node at (0.75,5.5) {\LARGE \textbf{$d=2$}};
    \node at (7.5,5.5) {\LARGE \textbf{$d=3$}};
    \begin{scope}[scale=1.5]
    \draw[fill=SkyBlue, fill opacity=0.2, draw opacity=0.95, dashed] 
        (0,0) -- (1,0) -- (1,1) -- (0,1) -- cycle;
    \node at (0.5,0.5) {};

    \draw[fill=SkyBlue, opacity=0.8] 
        (0,1) -- (1,1) -- (1,2) -- (0,2) -- cycle;
    \node at (0.5,1.5) {\Large 1};

    \draw[black, line width=2pt] (0.5,1.3) -- (0.5,1.0);
    
    \draw[red, line width=2pt] (0,1) -- (1,1);
    \end{scope}
    
    \begin{scope}[shift={(8,2)}, scale= 2.5]
    \draw[fill=SkyBlue, fill opacity=0.2, dashed]
        (0,0,0) -- (-1,0,0) -- (-1,0,1) -- (0,0,1) -- cycle;
    
    \draw[fill=SkyBlue, fill opacity=0.2, dashed]
        (0,0,0) -- (0,0,1) -- (0,1,1) -- (0,1,0) -- cycle;

    \draw[fill=SkyBlue, fill opacity=0.2, dashed]
        (0,0,0) -- (0,0,1) -- (0,-1,1) -- (0,-1,0) -- cycle;
        
    \draw[fill=SkyBlue, opacity=0.8]
        (0,0,0) -- (1,0,0) -- (1,0,1) -- (0,0,1) -- cycle;

    \draw[black, line width=2pt] (0.4, 0, 0.5) -- (0, 0, 0.5);
    
    \draw[red, line width=2pt] (0,0,1) -- (0,0,0);
    
    \node at (0.5,0,0.5) {\Large 1};
    \node at (-0.5,0,0.5) {\Large ?};
    \node at (0,0.5,0.5) {\Large ?};
    \node at (0,-0.5,0.5) {\Large ?};
    \end{scope}

\end{tikzpicture}
\caption{First face, entering faces, and entering edge of a twig in $d=2$ and $d=3$. In $d=2$, given the first face (indicated with a $1$) and the entering edge (red edge), we can assign a unique entering face (dashed). In $d=3$, this is no longer possible.}
\label{fig:1st and 0th face}
\end{figure}

Another important difference with the case in $d=2$ is that the number of twigs grows much faster as the level of the recursion is increases. 
This is a consequence of the fact that there are more self-avoiding surfaces in $d=3$ due to the extra orientations.
In fact, using \Cref{thm:sam-som-upper-bound} with $d=3$ and $k=2$, yields an upper-bound
\begin{equation}
    \mu^{\rm{SAS}}_{\cube} \leq \frac{87}{4} = 20.25 \hspace{1pt},
\end{equation}
which is $3$ times larger than the corresponding upper-bound in $d=2$ given by Eq.~\eqref{eqn: Bound SAS square}. This is the first computational limitation, which hinders the algorithm to find the set of twigs. The second limitation comes from constructing the polynomial once the set of twigs are obtained. 
In \( d = 2 \), we can efficiently determine the polynomial associated with a set of twigs. Instead of storing each twig explicitly, we store the configuration of black dots and white squares, where each white square represents a choice between a white circle (alive) and a cross (excluded); see Fig.~\ref{fig:twigs-level-1}.

For such a set of twigs, the corresponding polynomial is given by
\[
y^{N_{\mathrm{b}}}(1 + x)^{N_{\mathrm{s}}},
\]
where \( N_{\mathrm{b}} \) is the number of black dots and \( N_{\mathrm{s}} \) is the number of white squares. This form arises because each white square independently contributes either a factor of \( x \) (if chosen as a white circle) or \( 1 \) (if excluded). Therefore, the factor \( (1 + x)^{N_{\mathrm{s}}} \) accounts for all possible combinations of alive and excluded neighbors. For example, the set of twigs shown in Fig.~\ref{fig:twigs-level-1} corresponds to the polynomial \( y(1 + x)^3 \).

In contrast, in $d=3$ we cannot determine the polynomial so easily. Whilst we can still store the twigs as collections of black dots and white squares, the white squares cannot be turned on independently of each other. To address this difficulty, our strategy is to consider the edges on the boundary between the black dots and the white squares. Each of these edge can then activate a white square, meaning that the white square can be turned into a white circle. However, different edges on the boundary can activate the same white square as show in Fig.~\ref{fig: not free edge}.
If this is the case, we say that the edges are not free. They are not free because if one edge turns on the shared face, then the second edge is locked as it cannot turn on any other faces (refer to Fig.~\ref{fig: not free edge}).

The optimal way to determine the polynomials from a given set of black dots and white squares is to first find the list of edges on the boundary \( E \), and then determine which of these edges are free and which are not.
The contribution to the polynomial from each free edge is simply a factor
\begin{equation}
    (1 + x)^{N_{1}} (1 + 2x)^{N_{2}} (1 + 3x)^{N_{3}}
\end{equation}
where \( N_{i} \) is the number free edges connected to $i$ white squares.

In contrast, the contribution from non-free edges is more cumbersome to determine, as we need to try all combinations in which the edges can be turned on, and then run a check to see which of these configurations are allowed, so that no edge is shared by more than two faces, preserving the self-avoiding condition. 

There is another caveat in $d=3$ compared to $d=2$, which concerns the choice of the first face of an element of $\mathrm{SAM}_{(3,2)}$. In $d=2$, as previously discussed, we can identify the first face of a given surface in $\mathrm{SAM}_{(2,2)}$ with the first face of a suitable twig from the list of twigs. The first face of a surface is the one with smallest lexicographic order (bottommost–leftmost face). Therefore, the first face of a surface in $\mathrm{SAM}_{(2,2)}$ is connected to at most $2$ other faces.
However, this property fails in $d=3$: the first face (i.e., the one with smallest lexicographic order, such as the bottommost–leftmost–frontmost face) may be connected to four other faces instead of two (see Fig.~\ref{fig:1 4 q-variables}).

\begin{figure}[H]
    \centering
\begin{tikzpicture}
    \node at (0.75,5.5) {\LARGE \textbf{$d=2$}};
    \node at (9.0,5.5) {\LARGE \textbf{$d=3$}};
    \begin{scope}[scale=1.5]
     \draw[fill=SkyBlue, fill opacity=0.2, dashed] 
        (0,0) -- (1,0) -- (1,1) -- (0,1) -- cycle;
    \node at (0.5,0.5) {\Large 0};

    \draw[fill=SkyBlue, opacity=0.8] 
        (0,1) -- (1,1) -- (1,2) -- (0,2) -- cycle;
    \node at (0.5,1.5) {\Large 1};

    \draw[fill=SkyBlue, opacity=0.8] 
        (1,1) -- (2,1) -- (2,2) -- (1,2) -- cycle;

    \draw[fill=SkyBlue, opacity=0.8] 
        (0,2) -- (1,2) -- (1,3) -- (0,3) -- cycle;
        
    \draw[black, line width=2pt] (1,1) -- (1,2) -- (0,2);
    
    \draw[red, line width=2pt] (0,1) -- (1,1);
\end{scope}
    
    \begin{scope}[shift={(8,2)}, scale= 2.5]

\draw[fill=SkyBlue, opacity=0.8] 
    (0,0,0) -- (1,0,0) -- (1,0,1) -- (0,0,1) -- cycle;

\draw[fill=SkyBlue, opacity=0.8]
    (0,0,0) -- (1,0,0) -- (1,1,0) -- (0,1,0) -- cycle;

\draw[fill=SkyBlue, opacity=0.8]
    (0,0,0) -- (0,1,0) -- (0,1,1) -- (0,0,1) -- cycle;

\draw[fill=SkyBlue, opacity=0.8]
    (0,0,1) -- (1,0,1) -- (1,1,1) -- (0,1,1) -- cycle;

\draw[fill=SkyBlue, opacity=0.8]
    (1,0,0) -- (1,1,0) -- (1,1,1) -- (1,0,1) -- cycle;

\node at (0.5,0,0.5) {\Large 1};
\draw[black, line width = 1.8pt]
    (0.02,0,0) -- (0.98,0,0) -- (0.98,0,1) -- (0.02,0,1) -- cycle;

\end{scope}

\end{tikzpicture}
\caption{In $d=2$, the first face of a surface (bottomost-leftmost face) is at most connected to two surfaces (we do not count the zeroth face). In $d=3$, the first face of a surface (bottommost-leftmost-frontmost face) could be connected to $4$ faces (thick black lines).}
\label{fig:1 4 q-variables}
\end{figure}
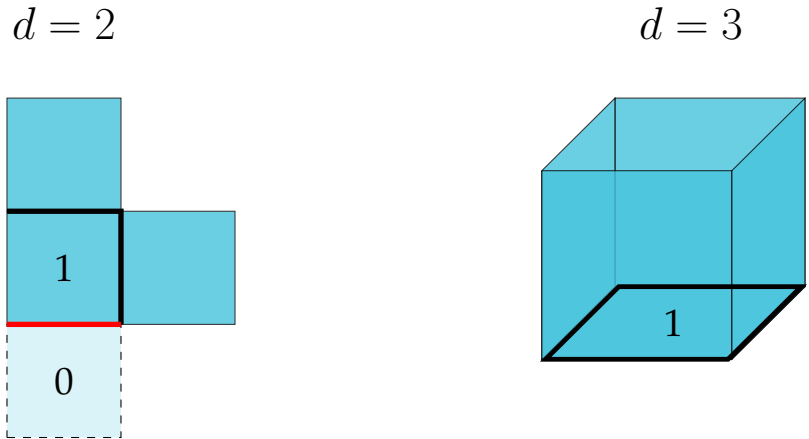
Because a zeroth face cannot be assigned in $d=3$, the function that generates sequences of twigs must be modified. In particular, we cannot use Eq.~\eqref{eqn: function d=2}; the numerator has to enforce that the first twig has no zeroth face. The correct generating function is
\begin{equation}
  \frac{xy(1+3x)^4}{1-p_{\ell}(x,y)} \, .
\end{equation}
The prefactor $xy(1+3x)^4$ encodes the constraints on the first face: it can be adjacent to at most four faces, giving the fourth power, and for each potential adjacency we either do not attach a face (corresponding to \say{$1$}) or we attach a face with any of three possible orientations (corresponding to \say{$3x$}). However, this change in the numerator does not ultimately change the resulting upper bound.

Using these considerations, we run the algorithm to find improved upper bounds, which is tabulated in \Cref{tab:upper-bounds-sas cubic}.
\begin{table}[H]
    \centering
\begin{tabular}{r r}
    Twig level & Upper bound for $\mu_\cube^\rm{SAS}$ \\
    \hline
    1 & 20.25000 \\
    2 & 18.23447 \\
    3 & 17.11728
    \end{tabular}
    \caption{Rigorous upper bounds for SASs on the cubic lattice, obtained using the Twig method.}
    \label{tab:upper-bounds-sas cubic}
\end{table}

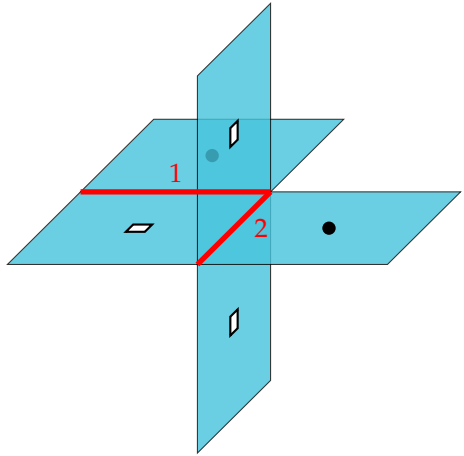
\begin{figure}[H]
    \centering
\begin{tikzpicture}[scale=2.5]


    \draw[fill=SkyBlue, opacity=0.8]
        (-1,0,0) -- (0,0,0) -- (0,0,-1) -- (-1,0,-1) -- cycle;
    \fill (-0.5, 0, -1+0.5) circle (1pt);
    \draw[fill=SkyBlue, opacity=0.8]
        (0,0,0) -- (-1,0,0) -- (-1,0,1) -- (0,0,1) -- cycle;
    \filldraw[fill=white, draw=black, thick]
    (-0.45, 0,0.45) -- (-0.55, 0,0.45) -- (-0.55, 0,0.55) -- (-0.45, 0,0.55) -- cycle;
    
    \draw[fill=SkyBlue, opacity=0.8]
        (0,0,0) -- (0,0,1) -- (0,1,1) -- (0,1,0) -- cycle;
    \fill (0,0.5,0.5) circle (0.4pt); 
    \filldraw[fill=white, draw=black, thick]
    (0, 0.45,0.45) -- (0, 0.55,0.45) -- (0, 0.55,0.55) -- (0, 0.45,0.55) -- cycle;

    \draw[fill=SkyBlue, opacity=0.8]
        (0,0,0) -- (0,0,1) -- (0,-1,1) -- (0,-1,0) -- cycle;
    \filldraw[fill=white, draw=black, thick]
    (0, -0.45,0.45) -- (0, -0.55,0.45) -- (0, -0.55,0.55) -- (0, -0.45,0.55) -- cycle; 

    \draw[fill=SkyBlue, opacity=0.8]
        (0,0,0) -- (1,0,0) -- (1,0,1) -- (0,0,1) -- cycle;
    \fill (0.5, 0, 0.5) circle (1pt);

    \draw[red, line width=2pt] (-1,0,0)--node[midway, xshift=-7pt,yshift = 7pt,  right]{\small 1} (0,0,0);
    \draw[red, line width=2pt] (0,0,1) -- node[midway, xshift=3pt, right]{\small 2} (0,0,0);
\end{tikzpicture}
\caption{Edges $1$ and $2$ lie on the boundary between the black dots and the white squares, and they are not free.
In fact, if edge $1$ turns on the shared white square, then edge $2$ is not allowed to turn on any of the other white squares.}
\label{fig: not free edge}
\end{figure}

\section{Conclusion and outlook} \label{sec:conclusion}
In this work, we introduced various classes of restricted walks, such as self-osculating walks (SOWs) and osculating domain wall walks (ODWs). By generalising the `automata' method of \cite{ponitz2000improved}, we obtained upper bounds for the connective constants of these walks:
\[
\mu^\rm{SOW}_\square \leq 2.73911, \qquad 
\mu^\rm{SOW}_\triangle \leq 4.44931, \qquad
\mu^\rm{ODW}_\triangle \leq 4.44867.
\]
We also introduced the analogue of restricted walks in higher dimensions: $(d,k)$-restricted surfaces. For example, we introduced self-osculating surfaces (SOSs) for $k=2$, and self-osculating manifolds (SOMs) for $k=d-1$. Using a concatenation-based argument generalising the procedure in \cite{van1989self}, we proved the existence of the growth constant for such restricted manifolds. We also proved their upper and lower bounds.

Finally, we applied the twig method, as introduced in \cite{eden1961two,klarner1973procedure}, to obtain an upper bound for the growth constant of self-avoiding surfaces (SASs) on the cubic lattice:
\[
\mu^\rm{SAS}_{\cube} \leq 17.11728.
\]

Many questions remain. The first is developing or modifying existing methods to obtain improved lower bounds of these models. 

The second pertains to the twig method. For polyominoes, it is known that an improved set of twigs, called `L-contexts', can be used for polyominoes, or any $(d,d)$-fixed polyominoids (equivalent to $(d,d)$-SAMs or $(d,d)$-SOMs) \cite{barequet2022improved}. The crucial information here is that the orientations of $d$-cubes cannot change, and therefore one can assume further restrictions on possible occupations of $d$-cubes. For general $(d,k)$-SAMs, the orientations can change, which prevented us from applying it for improved upper bounds. An improved set of twigs for general $(d,k)$, or even $(d,d-1)$ would lead to drastically improved upper bounds. Adaptation of the twig method for SOSs and fixed polyominoids (XDs) is another remaining task. The trouble here is that the notion of `distance' from the first face is more difficult to define, which prevented us in straightforwardly developing an efficient algorithm to generate the twigs for them. 

The third has to do with upper bounds for closed surfaces or hypersurfaces. Can a method be developed to systematically obtain improved upper bounds for closed SASs? Current upper bound for closed SASs in $d=3$ is $\mu^\mathrm{SAS}_\cube(h=0) \leq 3$. However, Monte Carlo estimates suggest $\mu^\mathrm{SAS}_\cube \approx 1.733 \pm 0.006$ \cite{glaus1986monte}. Closed SOSs can also be defined such that if odd number of faces neighbour an edge, then there is no boundary there, and if an even number, then there is a boundary. Whether it is possible to upper bound the connective constant of closed SOSs remains unexplored.

Other questions regarding the restricted manifold models but unrelated the their growth constants is the critical exponents, which are related to polynomial corrections to $c_n$, the number of configurations with hyperarea $n$. It would be interesting to see if they fall in the same universality class as self-avoiding manifolds.


\section*{Acknowledgments}
SWPK would like to thank Max McGinley for useful discussions, especially for motivating osculating domain walls. We acknowledge the use of \cite{create2025} for our numerical work.

\appendix

\section{Upper bounds for modified self-avoiding walks on the square lattice using the automata method} \label{apdx:modified-walk-square}
We can use the automata method to obtain an upper bound for a modified version of the self-avoiding walk on the square lattice. In this model, paths are not allowed to self-intersect, and we additionally impose the constraint that no two consecutive steps may be taken in the same direction. This restriction leaves us with only ``L-shaped'' directions, as depicted in Fig.~\ref{fig: Vertices modified SAW}.
\begin{figure}[H]
    \centering
\begin{center}
\begin{tikzpicture}[baseline={([yshift=-.5ex] current bounding box.center)}, scale=0.5]
    \draw [gray!50, ultra thick] (0, -1) to (0, 1);
    \draw [gray!50, ultra thick] (-1, 0) to (1, 0);
    \draw [ultra thick, blue!75] (0, 1) to (0, 0) to (-1,0);
    \draw[dashed] (1,1) -- (-1,1) -- (-1,-1) -- (1,-1) -- (1, 1);
\begin{scope}[shift={(5,0)}]
    \draw [gray!50, ultra thick] (0, -1) to (0, 1);
    \draw [gray!50, ultra thick] (-1, 0) to (1, 0);
    \draw [ultra thick, blue!75] (0, -1) to (0, 0) to (-1,0);
    \draw[dashed] (1,1) -- (-1,1) -- (-1,-1) -- (1,-1) -- (1, 1);
\end{scope}
\begin{scope}[shift={(10,0)}]
    \draw [gray!50, ultra thick] (0, -1) to (0, 1);
    \draw [gray!50, ultra thick] (-1, 0) to (1, 0);
    \draw [ultra thick, blue!75] (0, -1) to (0, 0) to (1,0);
    \draw[dashed] (1,1) -- (-1,1) -- (-1,-1) -- (1,-1) -- (1, 1);
\end{scope}
\begin{scope}[shift={(15,0)}]
    \draw [gray!50, ultra thick] (0, -1) to (0, 1);
    \draw [gray!50, ultra thick] (-1, 0) to (1, 0);
    \draw [ultra thick, blue!75] (0, 1) to (0, 0) to (1,0);
    \draw[dashed] (1,1) -- (-1,1) -- (-1,-1) -- (1,-1) -- (1, 1);
\end{scope}
\end{tikzpicture}  
\end{center}
    \caption{Bulk vertex configurations for the modified self-avoiding walks on the square lattice}
    \label{fig: Vertices modified SAW}
\end{figure}
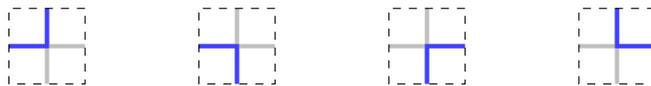
There is a trivial upper bound that can be obtained from considering the fact that once path goes through an unvisited vertex, there are only $2$ ways in which the path can leave the vertex, thus $\mu^\mathrm{L}_{\square} \leq 2$. We can then refine this upper bound using the automata method which yields $\mu^{\rm{L}}_{\square} \leq 1.57790$, the full are shown in Table~\ref{tab:upper-bounds-modified-saw}.
\begin{table}[H]
    \centering
    \begin{tabular}{r r}
        Loop size \(n\) & Upper bound for \(\mu_{\square}^{\rm L}\) \\
        \hline
        4  & 1.61804 \\
        8  & 1.61804 \\
        12 & 1.60135 \\
        16 & 1.59511 \\
        20 & 1.59021 \\
        24 & 1.58672 \\
        28 & 1.58408 \\
        32 & 1.58202 \\
        36 & 1.58037 \\
        40 & 1.57902 \\
        44 & 1.57790 \\
    \end{tabular}
    \caption{Upper bounds for \(\mu_{\square}^{\rm L}\) obtained using the automata method for the modified self-avoiding walk on the square lattice.}
    \label{tab:upper-bounds-modified-saw}
\end{table}

\section{Radius of convergence of the diagonal of a function $g(x, y)$}
\label{apdx: diagonal function}
In this appendix, we briefly explain how to extract the radius of convergence of the diagonal function following \cite{klarner1973procedure}. Define a bivariate function
\begin{equation}
\label{eqn: f(x,y)}
    g(x,y) = \sum_{nm}g_{nm}x^ny^m \hspace{1pt},
\end{equation}
and its diagonal part
\begin{equation}
    g_d(z) = \sum_{n}g_{nn}z^n \hspace{1pt}.
\end{equation}
Let $R_x,R_y$ be the two radii of convergence of $g(x,y)$ meaning that the summation in Eq.~\eqref{eqn: f(x,y)} converges absolutely for $|x| <R_x$ and $|y|<R_y$.
The aim is to find an expression for the radius of convergence of the diagonal part $R^{-1}_d = \lim_{n\to \infty}g_{nn}^{1/n}$. Notice that $g_d(z)$ has the following contour integral representation
\begin{equation}
\label{eqn: contour integral}
g_d(z) = \frac{1}{2\pi \mathrm{i}}\oint_{\Gamma}dss^{-1}g(s,zs^{-1}) \hspace{1pt},    
\end{equation}
where $\Gamma$ is a contour in a region where $g(s,zs^{-1})$ is analytic, $|s|<R_x \cup |zs^{-1}|<R_y$, which implies $|z/R_y|<|s|<R_x$.
In our case the function $g(x,y)$ has the form 
\begin{equation}
    g(x,y) = \frac{q(x,y)}{p(x,y)} \hspace{1pt},
\end{equation}
where $p(x,y)$ and $q(x,y)$ are polynomials.
The polynomial $q(x,y)$ depends on wether we are dealing with twigs in $d=2$ or $d = 3$ to construct $\mathrm{SAM}_{(d,2)}$:
\begin{equation}
q(x,y) =
\begin{cases}
    x & d=2 \\
    x(1+3x)^4 y & d = 3 \;.
\end{cases}
\end{equation}
This reflects a subtlety about the 1st face of a surface. 
The polynomial is of the form
\begin{equation}
    p(x,y) = \sum_{n=0}^{N_\mathrm{x}}\sum_{m=0}^{N_\mathrm{y}}a_{nm}x^ny^m = \sum_{m=0}^{N_\mathrm{y}}\sum_{s=-m}^{N_\mathrm{x}-m}a_{s+m,m}x^s(xy)^m = \sum_{s=-N_\mathrm{y}}^{N_\mathrm{x}}\sum_{m= \max(-s,0)}^{\min(N_\mathrm{x}-s,N_\mathrm{y})}a_{s+m,m}x^s(xy)^m \hspace{1pt},
\end{equation}
where in the last equality we have swapped the summation over $s$ and $m$. It follows that the polynomial can be written as
\begin{equation}
    p(x,y) = \sum_{s=0}^{N_\mathrm{y}}x^{-s}P_{N_\mathrm{y}-s}(xy) + \sum_{s=1}^{N_\mathrm{x}}x^{s}P_{N_\mathrm{y}+s}(xy) \hspace{1pt},
\end{equation}
where $P_j(xy)$ is a polynomial of degree $j$. Therefore,
\begin{equation}
    p(s,zs^{-1}) = s^{-N_\mathrm{y}}\left[\sum_{j=0}^{N_\mathrm{y}}s^{-j+N_\mathrm{y}}P_{N_\mathrm{y}-j}(z) + \sum_{j=1}^{N_\mathrm{x}}s^{j+N_\mathrm{y}}P_{N_\mathrm{y}+j}(z)\right] = s^{-N_\mathrm{y}}\sum_{j=0}^{N_\mathrm{x}+N_\mathrm{y}}s^{j}P_{j}(z) \hspace{1pt}.
\end{equation}
In our case, the integrand of Eq.~\eqref{eqn: contour integral} is therefore given by
\begin{equation}
    s^{-1}g(s,zs^{-1}) = 
    \begin{cases}
    \frac{1}{ p(s,zs^{-1})} = \frac{s^{N_\mathrm{y}}}{\sum_{j=0}^{N_\mathrm{x}+N_\mathrm{y}}s^{j}P_j(z)} = \frac{s^{N_\mathrm{y}}}{P_{N_\mathrm{x}+N_\mathrm{y}}(z) \prod_{j=1}^{N_\mathrm{x}+N_\mathrm{y}}(s-r_j(z))} & d=2\\
    \frac{zs^{-1}(1+3s)^{4}}{ p(s,zs^{-1})} =  \frac{z(1+3s)^{4}s^{N_\mathrm{y}-1}}{\sum_{j=0}^{N_\mathrm{x}+N_\mathrm{y}}s^{j}P_j(z)} = \frac{z(1+3s)^{4}s^{N_\mathrm{y}-1}}{P_{N_\mathrm{x}+N_\mathrm{y}}(z) \prod_{j=1}^{N_\mathrm{x}+N_\mathrm{y}}(s-r_j(z))} & d=3
    \end{cases}
\end{equation}
To find the contour integral, we need to determine the residue of the expression. In both cases, the singularities arises due to the roots of the polynomial in the denominator; hence, we need to determine:
\begin{equation}
\label{eqn: residue}
    \operatorname{Res}_{s = r_j}\left[P_{N_\mathrm{x}+N_\mathrm{y}}(z)\prod_{k=1}^{N_\mathrm{x}+N_\mathrm{y}}(s-r_k(z))\right]^{-1} \hspace{1pt}.
\end{equation}
For example, if we consider $d=2$ and assume that no roots are repeated then we have only simple poles and we obtain the expression
\begin{equation}
 g_d(z) = \sum_{j = 1}^{t}r^{N_\mathrm{y}}_j\left[P_{N_\mathrm{x}+N_\mathrm{y}}(z)\prod_{k=1, k \neq j}^{N_\mathrm{x}+N_\mathrm{y}}(r_k(z)-r_j(z))\right]^{-1}  \hspace{1pt},   
\end{equation}
where $t$ is the number of roots that lie inside the contour $\Gamma$.
The singularity of $g_d(z)$ are among the roots of the common denominator:
\begin{equation}
P_{N_\mathrm{x}+N_\mathrm{y}}(z)\prod_{k \neq j}^{N_\mathrm{x}+N_\mathrm{y}}(r_k(z)-r_j(z)) \hspace{1pt}.
\end{equation}
These set of roots are the same as the roots of the discriminant of $p(s,zs^{-1})$ with respect to the $s$ variable. We remind the reader that the discriminant of a polynomial of degree $n$ $P_n(x) = \sum_{j=0}a_jx^j$ is given by
\begin{equation}
    \Delta(P_n) = (-1)^{n(n+1)/2}a^{2n-2}_n\prod_{i \neq j}(r_i-r_j) \hspace{1pt},
\end{equation}
where $r_i$ are the roots of the polynomial. Notice that if a root is repeated $m$ times, then we need to calculate a residue of a pole of order $m$. This implies that we need calculate an expression of the form 
\begin{equation}
    \frac{1}{{(m-1)}!}\lim_{s \to r_j}\frac{d^{m-1}}{ds^{m-1}}(s-r_j)^{m}\left[P_{N_\mathrm{x}+N_\mathrm{y}}(z)\prod_{k=1}^{N_\mathrm{x}+N_\mathrm{y}}(s-r_k(z))\right]^{-1} \hspace{1pt}.
\end{equation}
However, note that the singularities of this expression still lie within the roots of the discriminant (for both $d=2$ and $d=3$),  
since taking the \( m \)-th derivative essentially raises the denominator to the power \( m+1 \) and multiplies it by an analytic function.  

Therefore, the singularities of $g_d(z)$ lies within the roots of the discriminant of $p(s,zs^{-1})$ with respect to $s$. The strategy is to numerically compute these roots and then find the smallest $r_{\rm{min}}$, this corresponds to the smallest radius of convergence. The upper bound for the inverse radius of convergence of $g_d(z)$ is then
\begin{equation}
R^{-1}_d \leq 1/r_{\rm{min}} \hspace{1pt}.   
\end{equation}
However, some care is needed. It is not necessarily true that all roots of the discriminant correspond to singularities of the function $g_d(z)$ since some roots of the common denominator in Eq.~\eqref{eqn: residue} might actually be removable singularities. Therefore, it is not guaranteed a priori that $r^{-1}_{\rm{min}}$ provides a valid upper bound. 

In order to select the right inverse root at a given level $\ell$ we use the fact that the upper bound must decrease as the twig level increases (see Section~\ref{Twig method}). Let $\mathcal{R}^{-1}(\ell)$ be the set of inverse roots at level $\ell$ and assume that $r^{-1}(\ell-1)$ is the correct upper bound at level $\ell-1$, then the valid upper bound at level $\ell$ is given by the largest inverse root that is smaller than $r^{-1}(\ell-1)$:
\begin{equation}
    r^{-1}(\ell) = \max \left\{ r^{-1} \in \mathcal{R}^{-1}(\ell) \,\middle|\, r^{-1} < r^{-1}(\ell-1) \right\} \hspace{1pt}.
\end{equation}
This defines $r^{-1}(\ell)$ recursively; the initial condition $r^{-1}(0)$ is given by \Cref{thm:sam-som-upper-bound} with $(d=3,k=2)$.


\bibliographystyle{apalike} 
\bibliography{bibliography} 

\end{document}